\newtheorem{theorem}{Theorem}
\newtheorem{proposition}[theorem]{Proposition}
\newtheorem{lemma}[theorem]{Lemma}
\newtheorem{corollary}[theorem]{Corollary}
\newtheorem*{theorem*}{Theorem}
\theoremstyle{definition}
\newtheorem{definition}[theorem]{Definition}
\newtheorem{remark}[theorem]{Remark}
\def\XXint#1#2#3{{\setbox0=\hbox{$#1{#2#3}{\int}$ }
\vcenter{\hbox{$#2#3$ }}\kern-.6\wd0}}
\definecolor{Yellow}{rgb}{0.95,0.9,0.0} 
\definecolor{Red}{rgb}{0.8,0.1,0.1}
\definecolor{Green}{rgb}{0.1,0.65,0.2}
\definecolor{Blue}{rgb}{0.1,0.1,0.8}
\definecolor{Purple}{rgb}{0.7,0.1,0.7}
\definecolor{Grey}{rgb}{0.6,0.6,0.6}
\definecolor{LightRed}{rgb}{0.8,0.5,0.5}
\definecolor{LightBlue}{rgb}{0.3,0.2,0.7}
\newcommand{\supp}{\operatorname{supp}}
\newcommand{\dist}{\operatorname{dist}}
\newcommand {\jump}[1] {[\![ #1 ]\!]}
\newcommand{\Rd}[1][d]{{\mathbb{R}^{#1}}}
\newcommand{\no}{\mathbf{n}}
\newcommand{\Tweak}{T_{0}}
\newcommand{\Tstrong}{T_{*}}
\newcommand{\eps}{\varepsilon}
\renewcommand{\vv}{\mathbf{v}}
 \newcommand{\II}{\mathcal{I}}
\renewcommand{\O}{\Omega}
\renewcommand{\(}{\left(}
\renewcommand{\)}{\right)}
\renewcommand{\div}{\operatorname{div}}
\begin{document}

\title[Sharp interface limit for a Navier--Stokes/Allen--Cahn system]
{The sharp interface limit of a Navier--Stokes/Allen--Cahn system
with constant mobility: Convergence rates by a relative energy approach}

\author{Sebastian Hensel}
\address{Hausdorff Center for Mathematics, Universit{\"a}t Bonn, Endenicher Allee 62, 53115 Bonn, Germany}
\email{sebastian.hensel@hcm.uni-bonn.de}

\author{Yuning Liu}
\address{NYU Shanghai, 1555 Century Avenue, Shanghai 200122, China, and NYU-ECNU Institute of
Mathematical Sciences at NYU Shanghai, 3663 Zhongshan Road North, Shanghai, 200062, China}
\email{yl67@nyu.edu}


\begin{abstract}
We investigate the sharp interface limit of a diffuse interface system that  
couples the Allen--Cahn equation with the instationary Navier--Stokes system 
in a bounded domain in $\mathbb{R}^d$ with $d \in \{2,3\}$. This model is used to  
describe a propagating front in a viscous incompressible flow with the width of 
the transition layer being characterized by a small  parameter $\eps>0$. 
We show that the solutions converge to a limit two-phase fluid system with surface tension that
couples the mean curvature flow and the Navier--Stokes system. The main assumptions
are that the evolution of the limit system is sufficiently regular and that
the associated evolving interface does not intersect the boundary of the container.
For quantitatively well-prepared initial data, we even establish an optimal convergence rate.
This is the first rigorous result of this kind which is valid in all
physically relevant ambient dimensions.

\medskip
\noindent \textbf{Keywords:} Allen--Cahn equation, complex fluid,  relative entropy, mean curvature flow,   sharp interface limit.

\medskip
\noindent \textbf{Mathematical Subject Classification}: 
Primary:
	76T99; 
Secondary:
	76D45, 
	76D05, 
	35R35, 
	53E10, 
	35K57. 
\end{abstract}

\maketitle
\tableofcontents

\section{Introduction}

\subsection{Context}
Curvature driven interface evolution is a challenging topic in PDE theory
due to the inherent emergence of topology changes. Indeed, classical
descriptions (i.e., via parametrization of the evolving interface) cease
to work once the solution approaches the first time of such a topology change. 
One popular tool to describe the dynamics even after these consists of
phase-field models, where instead of a sharp interface one considers
a diffuse interface layer of finite width (which is typically small related to
a given scaling parameter~$\eps$). In the specific example of the evolution
of two (macroscopically) immiscible fluids in the presence of surface tension, 
an associated fundamental phase-field model is the so-called \textit{model~H} 
coupling Navier--Stokes dynamics with the fourth-order Cahn--Hilliard dynamics.
This model was in fact introduced in~\cites{GURTIN1996,HohenbergHalperin}.

Next to obvious PDE~questions concerning existence and uniqueness of solutions to 
such a phase-field model, another highly relevant and natural question
is the consistency with a sharp interface model. Indeed, by formally taking the limit
$\eps \to 0$ one hopes to identify the underlying dynamics
in terms of the sharp interface model under consideration, and therefore
to justify the transition to the phase-field model. Needless to say,
for numerical applications apart from qualitative convergence results 
also quantitative results in the form of convergence rates in certain norms 
are of interest.

In the context of model~H (and even for more general models
allowing, e.g., a non-constant density), the formal 
derivation of the associated sharp interface limit model by means 
of asymptotic expansion techniques is performed in~\cite{AbelsGarckeGruen}. 
Rigorous justifications of these arguments are scarce though. For global-in-time
convergence of solutions to model~H (or a related model) to a rather weak
solution to the sharp interface limit model (i.e., a concept
of varifold solutions), one may consult~\cite{AbelsLengeler}
(or~\cite{AbelsRoeger}, respectively). Short-time (quantitative) 
convergence to strong solutions of the sharp interface
limit model is in turn shown in the recent works~\cite{Abels2021a} and~\cite{Abels2021b},
where, however, only the setting of the stationary Stokes operator is treated.
At the time of this writing, the rigorous derivation of convergence rates
incorporating the full Navier--Stokes dynamics in model~H remains an 
important open problem in the field.

In the present work, we study instead of model~H (or related models)
a phase-field model with fluid mechanical coupling in ambient dimension~$d \in \{2,3\}$ 
which is based on the second-order Allen--Cahn operator. 
We further consider the scaling regime 
of constant mobility, and with respect to the fluid mechanical modeling we 
restrict ourselves to the case of Navier--Stokes dynamics with constant
density and constant viscosity (we refer to Subsection~\ref{subsec:models} 
for a mathematical formulation of the model). Phase-field models of such
Navier--Stokes/Allen--Cahn type were first introduced in~\cites{LiuShen,Jiang2017}.
Formal asymptotic expansions (cf.\ \cite{Abels2022a}) suggest that the associated sharp-interface 
limit model is given by a Navier--Stokes two-phase flow with surface tension,
where, as a consequence of the constant mobility assumption, the interface
separating the two fluid phases is not merely transported by the fluids
but is also subject to mean curvature flow (we again refer to Subsection~\ref{subsec:models} 
for a mathematical formulation of the corresponding model). In particular, the mass of each individual
fluid phase is not preserved and the resulting model may be interpreted as
a simplified model for two-phase fluid flow incorporating phase transitions and surface tension. 
For global-in-time existence of weak solutions to the phase-field model,
we refer to~\cite{MR1329830} (see also \cite{Gal2010} for longtime behavior of solutions).

In this setting, the main result of the present work rigorously justifies the formally
derived sharp interface limit model in ambient dimension~$d \in \{2,3\}$.
Our convergence result holds true on the time horizon of existence
of a (sufficiently regular) strong solution of the sharp interface limit.
We even derive sharp convergence rates in strong norms under the
assumption of correspondingly well-prepared initial data.
For precise mathematical statements of these two main results, we refer
to Theorem~\ref{theo:mainResult} and Corollary~\ref{cor:sharpConvergenceRates} below.
We mention that a similar convergence result was recently established 
in~\cite{Abels2022} (cf.\ also \cite{Jiang2022})
by a completely different approach (we provide further comments on the methods later).
The results of~\cite{Abels2022} are, however, restricted to ambient dimension~$d=2$,
so that our work, to the best of our knowledge, is the first to rigorously justify 
the above sharp interface limit in all physically relevant dimensions. It has to be said, 
though, that the authors of~\cite{Abels2022} are in addition able to treat the regime
of different viscosities. Even though we expect this to be manageable also in the 
framework of our strategy (without restrictions on the ambient dimension), 
this may very well lead to a doubling of the length of the present paper
(cf.\ the corresponding challenges in~\cite{Fischer2020c}). For this reason, we 
restrict ourselves to the most basic setting. We finally mention
that~\cite{AbelsLiu} contains a preliminary convergence result
preceding ours and the one of~\cite{Abels2022}, where the authors replace
the full Navier--Stokes dynamics by the stationary Stokes operator
(and again only in ambient dimension~$d=2$).

Without coupling to a fluid mechanical system, the rigorous
convergence of phase-field models to sharp interface evolutions
is of course an already extensively studied subject in the literature.
For (qualitative and/or quantitative) convergence of solutions
of the Allen--Cahn equation towards various weak or strong notions of solutions
for mean curvature flow, one may consult, e.g., the 
works~\cites{Evans1992,DeMottoni1995,ilmanen,mugnai-roeger,serfaty,Fischer2020b,Hensel2021l}.
For the inclusion of constant contact angles, corresponding results can
be found in~\cites{Owen1992,Katsoulakis1995,Barles1998,Lio2003,Mizuno2015,
Kagaya2018a,Abels2019,Abels2021,Hensel2021d,Hensel2021c},
whereas (qualitative and/or quantitative) convergence of solutions
of the vectorial Allen--Cahn equation towards evolution by multiphase
mean curvature flow is the subject of~\cites{LauxSimon18,Fischer2022}.
The results on the connection of the Allen--Cahn equation with
mean curvature flow have a fourth-order analogue, namely,
the convergence of solutions of the Cahn--Hilliard equation
towards evolution by Mullins--Sekerka flow as, e.g., shown
in~\cites{Chen,Alikakos1994,Le2008} (cf.\ also the discussion in~\cite{serfaty}).
For a corresponding result with disparate mobilities, we refer
to the recent work~\cite{Kroemer2021}. Finally, we mention~\cites{Laux2021}
for a scaling limit result modeling nematic-isotropic phase
transitions in the context of Landau--De~Gennes theory of liquid crystals.

In order to establish the above convergence results, 
a variety of techniques and frameworks is used depending
on the precise goals (i.e., qualitative long-time convergence
towards weak solutions of the sharp interface limit model
vs.\ quantitative convergence towards sufficiently
smooth solutions of the sharp interface limit model
until the latter run into their first topology change).
In fact, most of the above works are either based on
notions from geometric measure theory (e.g., varifolds
and their first variations), gradient flow techniques
in the spirit of~\cite{Sandier2004} (cf.\ also \cite{serfaty}),
or combining rigorous asymptotic expansions with a
stability analysis of the linearized operator associated
with the phase-field model. Especially in the context
of the previously mentioned quantitative convergence results
incorporating a fluid mechanical coupling, the latter strategy seems
to be the only one used so far to the best of our knowledge.

However, in the recent inspiring work~\cite{Fischer2020b},
a new approach for the derivation of convergence rates 
was introduced and implemented for the simplest
setting of the Allen--Cahn equation posed on the full space~$\mathbb{R}^d$.
This strategy is closest to the gradient flow perspective
in the sense that it first generates a distance measure
for the difference of the phase-field and sharp interface
solutions based on the phase-field energy, and then estimates
its time evolution by a Gronwall-type argument based, amongst other things,
on the dissipation structure of the phase-field model.
The method developed in~\cite{Fischer2020b} is therefore
reminiscent of a well-established technique to establish
(weak-strong) uniqueness of solutions in the context of a variety of classical
continuum mechanics models (e.g., incompressible and compressible Navier--Stokes flow,
or conservation laws): the so-called relative entropy method.
In fact, the work~\cite{Fischer2020b} draws motivation from recent
results extending the relative entropy method (and thus weak-strong uniqueness) 
to problems incorporating geometric evolution. This was first implemented
for binormal curvature flow of curves in~$\mathbb{R}^3$ in~\cite{JerrardSmets},
or for Navier--Stokes two-phase flow with surface tension in~\cite{Fischer2020c}.
Subsequent extensions are able to deal with multiphase mean curvature flow~\cite{Fischer2020a}
(cf.\ also~\cite{Hensel2021}), contact angle problems~\cites{Hensel2021m,Hensel2021d},
or a novel notion of varifold solutions for mean curvature flow 
in the spirit of ideas from De~Giorgi~\cite{Hensel2021l}. 
In view of these developments, it is not surprising that also~\cite{Fischer2020b}
already led to several follow-up works in the context of scaling limits for phase-field models,
see~\cites{Laux2021,Hensel2021c,Liu2021,Fischer2022}.

In the present work, we continue this story and extend the approach from~\cite{Fischer2020b}
to the case of a Navier--Stokes/Allen--Cahn model with constant mobility.
We refer the reader to Section~\ref{sec:mainResult} and 
especially Section~\ref{sec:errorFunctionals} for a mathematical account 
on our strategy.

\subsection{The phase-field model and its sharp interface limit}
\label{subsec:models}
Let $\Omega \subset \Rd$, $d \in \{2,3\}$, be a bounded domain with orientable
and $C^2$-boundary. We then consider the following most basic
Navier--Stokes/Allen--Cahn problem in $\Omega$
\begin{subequations}
\begin{align}
\label{eq:phaseFieldModel1}
\partial_t v_\eps + (v_\eps\cdot\nabla)v_\eps &= 
\Delta v_\eps - \nabla \pi_\eps 
- \nabla\cdot\big(\eps\nabla\varphi_\eps\otimes\nabla\varphi_\eps\big) 
&&\text{in }\Omega\times (0,\Tweak),
\\
\label{eq:phaseFieldModel2}
\nabla\cdot v_\eps &= 0
&&\text{in }\Omega\times (0,\Tweak),
\\
\label{eq:phaseFieldModel3}
\partial_t\varphi_\eps + (v_\eps\cdot\nabla)\varphi_\eps 
&= \Delta\varphi_\eps - \smash{\frac{1}{\eps^2}W'(\varphi_\eps)} 
&&\text{in }\Omega\times (0,\Tweak),
\\
\label{eq:phaseFieldModel4}
v_\eps(\cdot,0) 
&= v_{\eps,0} 
&&\text{in }\Omega,
\\
\label{eq:phaseFieldModel5}
\varphi_\eps(\cdot,0) 
&= \varphi_{\eps,0} \in [-1,1]
&&\text{in }\Omega,
\\
\label{eq:phaseFieldModel6}
(\varphi_\eps,v_\eps) &= (-1,0)
&&\text{on } \partial\Omega\times (0,\Tweak).
\end{align}
\end{subequations}	
Here, $W$ denotes a double-well potential satisfying standard assumptions
(see, e.g.,~\cite{AbelsLiu}). We fix
the associated surface tension constant by 
\begin{align*}
c_0:=\int_{-1}^{1} \sqrt{2W(r)}\,dr.
\end{align*}
In the sharp interface limit $\eps\to 0$ one expects to obtain 
a simplified model for a two-phase fluid flow with phase transitions,
which in our case reads 
\begin{subequations} 
\begin{align}
\label{eq:sharpInterfaceLimit1}
\partial_t v + (v\cdot\nabla)v &= \Delta v - \nabla \pi 
&&\text{in } \big(\Omega {\times} (0,\Tstrong)\big) \setminus \II,
\\
\label{eq:sharpInterfaceLimit2}
\nabla\cdot v &= 0
&&\text{in }\Omega {\times} (0,\Tstrong),
\\
\label{eq:sharpInterfaceLimit3}
\partial_t\chi + (v\cdot\nabla)\chi &= -H_{\II}|\nabla\chi| 
&&\text{in }\Omega {\times} (0,\Tstrong),
\\
\label{eq:sharpInterfaceLimit4}
\jump{v} &= 0,
&&\text{on } \II,
\\
\label{eq:sharpInterfaceLimit5}
\jump{2\nabla^\mathrm{sym}v - p\mathrm{Id}}\frac{\nabla\chi}{|\nabla\chi|}
&= c_0 H_{\II}\frac{\nabla\chi}{|\nabla\chi|}
&&\text{on } \II,
\\
\label{eq:sharpInterfaceLimit6}
v(\cdot,0) &= v_0
&&\text{in }\Omega,
\\
\label{eq:sharpInterfaceLimit7}
\chi(\cdot,0) &= \chi_0
&&\text{in }\Omega,
\\
\label{eq:sharpInterfaceLimit8}
(\chi,v) &= (0,0)
&&\text{on }\partial\Omega {\times} (0,\Tstrong).
\end{align}
\end{subequations}
In the above system, $\chi$ denotes a time-dependent characteristic function.
From now on, we assume that we are provided with a sufficiently regular solution $(\chi,v)$ for
the sharp interface limit model (for precise assumptions, we refer to 
Definition~\ref{def:strongSolSharpInterfaceLimit} below). 
In particular, $\supp|\nabla\chi| \cap (\Omega {\times} [0,\Tstrong))$ 
shall model a sufficiently regular interface 
\begin{equation}
\II={\bigcup_{t \in [0,\Tstrong)}
\II(t) {\times} \{t\}}
\end{equation}
 in space-time associated with an open and sufficiently regular 
set~$$\Omega_+ = {\bigcup_{t \in [0,\Tstrong)}
\Omega_+(t) {\times} \{t\}} \subset \Omega {\times} (0,\Tstrong).$$
The map $\chi$ is assumed to be $\equiv 1$ in~$\Omega_+$
as well as $\equiv 0$ in $(\Omega{\times}[0,T_*))\setminus\overline{\Omega_+}$. 
Denoting for all $t \in (0,\Tstrong)$ by $\no_{\II}(\cdot,t)$ the associated unit normal
along~$\II(t)$ pointing inside~$\Omega_+(t)$, the evolution equation 
for the interface then translates into (all scalar geometric
quantities are oriented with respect to the normal vector field~$\no_{\II}$)
\begin{align}
\label{eq:geomEvolutionSharpInterface}
V_{\II} &= \no_{\II}\cdot v + H_{\II}
&\text{on } \II.
\end{align}  
Here, $V_{\II}(\cdot,t)$ and $H_{\II}(\cdot,t)$ denote the normal speed and
the scalar mean curvature of the interface~$\II(t)$, $t \in [0,\Tstrong)$.
Finally, in order to avoid issues originating from contact point dynamics, we assume
throughout the rest of this work that
\begin{align}
\label{eq:noContactPoints}
\overline{\Omega_+(t)} &\subset \Omega 
&&\text{for all } t \in [0,\Tstrong).
\end{align}

\section{Main result}
\label{sec:mainResult}

The main result of the present work is concerned with
a rigorous convergence result for solutions
of the phase-field model~\eqref{eq:phaseFieldModel1}--\eqref{eq:phaseFieldModel6}
(even quantitatively with sharp convergence rates when starting from well-prepared initial data) 
towards strong solutions of the sharp interface limit
model~\eqref{eq:sharpInterfaceLimit1}--\eqref{eq:sharpInterfaceLimit8}.
As already mentioned in the introduction, our approach is not based
on the strategy of combining rigorous asymptotic expansions 
with stability estimates for the underlying linearized operator.
In contrast, our results are facilitated by the introduction of two
error functionals which aim to encode the difference between 
a solution of the phase-field model and a solution for the expected
sharp interface limit. More precisely, directly inspired by the recent
work of Fischer, Laux and Simon~\cite{Fischer2020b} on the (quantitative) convergence
of solutions of the Allen--Cahn equation to classical solutions
of mean curvature flow, we will work on $[0,T_*)$ with a relative energy
\begin{equation}
\begin{aligned}
\label{eq:relEnergyResults}
E[\varphi_\eps,v_\eps|\chi,v]
&:= \int_{\Omega} \frac{1}{2}\big|v_\eps {-} v\big|^2 \,dx 
\\&~~~~
+ \int_{\Omega} \frac{\eps}{2}\big|\nabla\varphi_\eps\big|^2 + 
\frac{1}{\eps}W\big(\varphi_\eps\big) 
- \xi\cdot \nabla \Big(\int_{-1}^{\varphi_\eps} \sqrt{2W(r)}\,dr\Big) \,dx
\end{aligned}
\end{equation}
as well as with a suitable measure for the difference in the phase indicators
\begin{align}
\label{eq:volErrorResults}
E_{\mathrm{vol}}[\varphi_\eps|\chi] := 
\int_{\Omega} \bigg| c_0\chi - 
\int_{-1}^{\varphi_\eps} \sqrt{2W(r)} \,dr \bigg| |\vartheta| \,dx.
\end{align}
In the above definitions, the vector field~$\xi$
will be a suitable extension of the unit normal vector field
of the smoothly evolving interface~$\II$ whereas~$\vartheta$
will be a suitable truncation of the associated signed distance function
(for precise assumptions on~$\xi$ and~$\vartheta$, we refer
to Subsection~\ref{subsec:relEnergy} and Subsection~\ref{subsec:volError},
respectively). Both qualitative and quantitative convergence then follows
from a Gronwall-type stability estimate for the two error 
functionals~\eqref{eq:relEnergyResults} and~\eqref{eq:volErrorResults}.
The precise statement reads as follows.

\begin{theorem}[Error estimates for general initial data]
\label{theo:mainResult}
Let $d \in \{2,3\}$, let $\Omega\subset\Rd$
be a bounded domain with orientable and 
$C^2$-boundary~$\partial\Omega$,
and consider two finite time horizons $0 < \Tstrong \leq \Tweak < \infty$.
Let~$\eps \in (0,1)$ and let~$(\varphi_\eps,v_\eps)$ be a weak
solution of the Navier--Stokes/Allen--Cahn 
system~\emph{\eqref{eq:phaseFieldModel1}--\eqref{eq:phaseFieldModel6}}
with time horizon~$\Tweak$ and initial data~$(\varphi_{\eps,0},v_{\eps,0})$
in the sense of Definition~\ref{def:weakSolPhaseFieldModel}.
Finally, let~$(\chi,v)$ be a strong solution for the sharp interface
limit model~\emph{\eqref{eq:sharpInterfaceLimit1}--\eqref{eq:sharpInterfaceLimit8}}
with time horizon~$\Tstrong$ and initial data~$(\chi_0,v_0)$
in the sense of Definition~\ref{def:strongSolSharpInterfaceLimit}.

Then, for a.e.\ $T \in (0,\Tstrong)$ there exists a constant 
$C=C(\chi,v,T) \in (0,\infty)$, a continuous vector field
$\xi {=} \xi(\chi,v,T)\colon\Omega{\times}[0,T]\to\Rd$, and a continuous
weight $\vartheta {=} \vartheta(\chi,v,T)\colon\Omega{\times}[0,T]\to\mathbb{R}$
such that the relative energy $E[\varphi_\eps,v_\eps|\chi,v]$ 
and the error in the phase indicators~$E_{\mathrm{vol}}[\varphi_\eps|\chi]$
defined by~\eqref{eq:relEnergyResults} and~\eqref{eq:volErrorResults}, 
respectively, satisfy the estimates
\begin{align}
\label{eq:gronwallEstimateRelativeEntropy}
E[\varphi_\eps,v_\eps|\chi,v](t) &\leq e^{Ct} 
\big(E[\varphi_\eps,v_\eps|\chi,v](0) + 
E_{\mathrm{vol}}[\varphi_\eps|\chi](0)\big),
&&\text{for a.e.\ } t \in [0,T],
\\ 
\label{eq:gronwallEstimatePhaseError}
E_{\mathrm{vol}}[\varphi_\eps|\chi](t) &\leq 
e^{Ct} \big(E[\varphi_\eps,v_\eps|\chi,v](0) + 
E_{\mathrm{vol}}[\varphi_\eps|\chi](0)\big),
&&\text{for a.e.\ } t \in [0,T].
\end{align}
\end{theorem}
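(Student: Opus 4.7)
The plan is to adapt the relative-energy/calibration strategy of Fischer--Laux--Simon \cite{Fischer2020b} to accommodate the Navier--Stokes coupling, drawing structurally on the weak--strong uniqueness framework for two-phase flow developed in \cite{Fischer2020c}. The first step is to construct the geometric quantities $\xi$ and $\vartheta$ from the strong solution $(\chi,v)$. In a tubular neighborhood of $\II$, condition~\eqref{eq:noContactPoints} permits us to take $\xi$ as a smooth cut-off extension of $\nabla\sigdist(\cdot,t)$, agreeing with $\no_{\II}$ on $\II$, and $\vartheta$ as a bounded, Lipschitz truncation of $\sigdist$, with linear behavior near $\II$ and constant sign in each phase. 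The essential calibration identities, motivated by the evolution~\eqref{eq:geomEvolutionSharpInterface}, are that $\partial_t \xi + (v\cdot\nabla)\xi + (\nabla v)^\top\xi$ vanishes to first order at $\II$ and that $\div\xi = -H_{\II}$ on $\II$; together they make $\xi$ an approximate calibration of the two-phase Navier--Stokes flow.

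The core step is to differentiate $E[\varphi_\eps,v_\eps|\chi,v]$ in time. The kinetic contribution is handled by testing~\eqref{eq:phaseFieldModel1} by $v_\eps$ and by $v$, and subtracting; after using~\eqref{eq:sharpInterfaceLimit1}--\eqref{eq:sharpInterfaceLimit5} this produces $-2\int_{\Omega}|\nabla^{\mathrm{sym}}(v_\eps{-}v)|^2\,dx$, a quadratic convective remainder controlled by $\|\nabla v\|_{L^\infty} E$, the phase-field capillary-stress term $\int_{\Omega}\eps\nabla\varphi_\eps\otimes\nabla\varphi_\eps:\nabla v \,dx$, and the sharp-interface surface-tension contribution $c_0\int_{\II}H_{\II}\no_{\II}\cdot v\dH$. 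For the modulated Modica--Mortola part, testing~\eqref{eq:phaseFieldModel3} against $-\Delta\varphi_\eps + \eps^{-2}W'(\varphi_\eps)$ yields the dissipation $-\int_{\Omega}\eps(\Delta\varphi_\eps - \eps^{-2}W'(\varphi_\eps))^2\,dx$ and a transport term; simultaneously, $-\frac{d}{dt}\int_{\Omega}\xi\cdot\nabla (\int_{-1}^{\varphi_\eps}\sqrt{2W(r)}\,dr)\,dx$ is expanded by integration by parts and use of the imposed evolution equation for $\xi$, which generates precisely the counterterm needed to cancel the phase-field capillary stress against $c_0\no_{\II}|\nabla\chi|$, up to two coercive quadratic remainders: the tilt excess
\begin{equation*}
\int_{\Omega}\eps\bigl|\nabla\varphi_\eps - |\nabla\varphi_\eps|\xi\bigr|^2\,dx
\quad\text{and the equipartition defect}\quad
\int_{\Omega}\Bigl(\sqrt{\eps/2}\,|\nabla\varphi_\eps| - \sqrt{W(\varphi_\eps)/\eps}\Bigr)^2\,dx,
\end{equation*}
both bounded by $E$ itself.

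Reorganizing the combined identity one arrives at $\frac{d}{dt}E \le C(\chi,v,T)(E + E_{\mathrm{vol}})$: the positive dissipations absorb everything except a Gronwall remainder $C\,E$ coming from $\|\nabla v\|_{L^\infty}$, $\|\partial_t\xi\|$, $\|\nabla\xi\|$, and a lower-order term proportional to $E_{\mathrm{vol}}$ arising where the normal-velocity contribution $(\no_{\II}\cdot v)|\nabla\chi|$ meets its diffuse counterpart. The evolution of $E_{\mathrm{vol}}$ is treated by regularizing the sign in $|c_0\chi - \int_{-1}^{\varphi_\eps}\sqrt{2W(r)}\,dr|$, applying the chain rule, and using the transport equation for $\chi$ together with \eqref{eq:phaseFieldModel3} multiplied by $\sqrt{2W(\varphi_\eps)}$; the resulting bulk terms are again absorbed into a bound of the form $\frac{d}{dt}E_{\mathrm{vol}} \le C(E + E_{\mathrm{vol}})$. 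Standard Gronwall then gives~\eqref{eq:gronwallEstimateRelativeEntropy}--\eqref{eq:gronwallEstimatePhaseError}.

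The main obstacle is the tight matching between the phase-field capillary stress $\eps\nabla\varphi_\eps\otimes\nabla\varphi_\eps$ and the sharp-interface surface-tension jump $c_0\no_{\II}\otimes\no_{\II}\,|\nabla\chi|$: their difference is not pointwise small, only small after tested against $\nabla v$ modulo both the tilt excess and the equipartition defect simultaneously. Making this cancellation quantitative in a form that is \emph{uniform in $\eps$ and dimension-independent} --- avoiding any reliance on $d=2$ Sobolev embeddings such as those used in \cite{Abels2022} --- is the technical heart of the argument and requires the full coercivity of $E$ together with careful $L^\infty$-in-$(\chi,v)$ bounds on the constructed $\xi,\vartheta$.
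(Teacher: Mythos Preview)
Your overall architecture---relative energy plus calibration plus Gronwall---is correct and matches the paper, but there is a genuine gap in your transport law for~$\xi$. You assert that $\partial_t\xi + (v\cdot\nabla)\xi + (\nabla v)^\top\xi$ vanishes to first order at~$\II$. This is false for the present model: since the interface evolves by $V_\II = \no_\II\cdot v + H_\II$ rather than by pure transport, the signed distance satisfies $\partial_t s_\II + (v\cdot\nabla)s_\II = -H_\II$ on~$\II$, which is $O(1)$, not $O(\dist(\cdot,\II))$; differentiating, $\partial_t\xi + (v\cdot\nabla)\xi + (\nabla v)^\top\xi$ inherits an $O(1)$ residual on~$\II$. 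The paper repairs this by introducing a \emph{second} vector field~$B$, an extension of the full normal velocity $V_\II\no_\II=(v\cdot\no_\II+H_\II)\no_\II$, and requires $\partial_t\xi + (B\cdot\nabla)\xi + (\nabla B)^\top\xi = O(\dist)$ together with the compatibility $(B-v)\cdot\xi + \nabla\cdot\xi = O(\dist)$. It is exactly this pair of conditions that lets one split the Allen--Cahn dissipation $\tfrac{1}{\eps}|H_\eps|^2$ into two completed squares, $\tfrac{1}{2\eps}|H_\eps+\sqrt{2W(\varphi_\eps)}\nabla\cdot\xi|^2$ and $\tfrac{1}{2\eps}|H_\eps-((B-v)\cdot\xi)\eps|\nabla\varphi_\eps||^2$, with a controllable remainder. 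Without the $B$-structure your identity produces uncontrolled terms of order~$\tfrac{1}{\eps}$ coming from the mean-curvature part of the interface motion.

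There is a second, related gap in how the two functionals couple. You claim $\frac{d}{dt}E\le C(E+E_{\mathrm{vol}})$ and $\frac{d}{dt}E_{\mathrm{vol}}\le C(E+E_{\mathrm{vol}})$ separately, but neither holds on its own. The $E$-inequality contains a cross term $\int_\Omega(c_0\chi-\psi_\eps)\big((v_\eps-v)\cdot\nabla\big)(\nabla\cdot\xi)\,dx$ which is \emph{not} bounded by $C(E+E_{\mathrm{vol}})$; the paper controls it via a tubular slicing plus one-dimensional Gagliardo--Nirenberg argument that costs a small multiple of $\int_\Omega|\nabla(v_\eps-v)|^2\,dx$, to be absorbed by the kinetic dissipation. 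Likewise the $E_{\mathrm{vol}}$-evolution produces terms of the form $\int_\Omega\vartheta\,\tfrac{H_\eps}{\sqrt\eps}(\cdots)\,dx$ that can only be absorbed by borrowing a fraction of the two dissipation squares above. The two estimates must therefore be added \emph{before} applying Gronwall, with the good-sign dissipation terms doing the absorption; your decoupled presentation hides precisely the mechanism that makes the argument close.
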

Based on the straightforward construction of well-prepared
initial data for the phase indicator (see, e.g., \cite{Fischer2020b}), 
the above theorem leads to a sharp $L^1$-convergence rate
for the phase-field and to a sharp $L^2$-convergence rate
for the fluid velocity.

\begin{corollary}[Sharp convergence rates for well-prepared initial data]
\label{cor:sharpConvergenceRates}
Let the assumptions and notation of Theorem~\ref{theo:mainResult} be in place.
For well-prepared initial data in the sense of
\begin{align}
\label{eq:wellPreparedInitialData}
E[\varphi_\eps,v_\eps|\chi,v](0) + 
E_{\mathrm{vol}}[\varphi_\eps|\chi](0)
\leq C(\chi_0)\eps^2,
\end{align}
where $C(\chi_0) \in (0,\infty)$ denotes a constant depending only on
the initial geometry of the strong sharp interface limit solution, it
follows that for a.e.\ $T \in (0,\Tstrong)$ there exists a constant 
$C=C(\chi,v,T) \in (0,\infty)$ such that
\begin{align}
\label{eq:sharpConvergenceRate}
\big\|(v_\eps {-} v)(\cdot,t)\big\|_{L^2(\Omega)}
+ \bigg\|\bigg(c_0\chi(\cdot,t) {-} 
\int_{-1}^{\varphi_\eps(\cdot,t)}\sqrt{2W(r)}\,dr\bigg)\bigg\|_{L^1(\Omega)}
\leq Ce^{Ct}\eps 
\end{align}
for a.e.\ $t \in [0,T]$.
\end{corollary}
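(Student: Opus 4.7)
The plan is to combine the two Gronwall-type estimates from Theorem~\ref{theo:mainResult} with the well-preparedness assumption~\eqref{eq:wellPreparedInitialData} in order to obtain $O(\eps^2)$ bounds on both $E[\varphi_\eps,v_\eps|\chi,v](t)$ and $E_{\mathrm{vol}}[\varphi_\eps|\chi](t)$, and then to translate these into $O(\eps)$ bounds on the two norms in~\eqref{eq:sharpConvergenceRate}. Since Theorem~\ref{theo:mainResult} is assumed to be in hand, the argument reduces to supplying two matching lower bounds, one for each error functional.

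\textbf{Velocity component.} First I would observe that the phase-field part of the integrand in~\eqref{eq:relEnergyResults} is pointwise non-negative. Using $\nabla\int_{-1}^{\varphi_\eps}\sqrt{2W(r)}\,dr = \sqrt{2W(\varphi_\eps)}\nabla\varphi_\eps$ together with the normalization $|\xi|\leq 1$ (part of the construction of $\xi$ carried out in Subsection~\ref{subsec:relEnergy}, which I may take for granted), the elementary Modica--Mortola inequality $\tfrac{\eps}{2}|\nabla\varphi_\eps|^2 + \tfrac{1}{\eps}W(\varphi_\eps) \geq \sqrt{2W(\varphi_\eps)}|\nabla\varphi_\eps|$ yields non-negativity of the full bracket. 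Consequently $E[\varphi_\eps,v_\eps|\chi,v] \geq \tfrac{1}{2}\|v_\eps-v\|_{L^2(\Omega)}^2$, and combining this with \eqref{eq:gronwallEstimateRelativeEntropy} and~\eqref{eq:wellPreparedInitialData} immediately produces the advertised $Ce^{Ct}\eps$ bound on $\|v_\eps-v\|_{L^2(\Omega)}$.

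\textbf{Phase-indicator component.} Writing $g_\eps := c_0\chi - \int_{-1}^{\varphi_\eps}\sqrt{2W(r)}\,dr$, I would split $\int_\Omega|g_\eps|\,dx$ according to the two regions $\{|\vartheta|\geq\eps\}$ and $\{|\vartheta|<\eps\}$. On the far region, dividing by $\eps$ bounds the contribution by $\eps^{-1}E_{\mathrm{vol}}[\varphi_\eps|\chi]$, which is already of size $Ce^{Ct}\eps$ by~\eqref{eq:gronwallEstimatePhaseError} and~\eqref{eq:wellPreparedInitialData}. On the near region, I would use the deterministic pointwise bound $|g_\eps|\leq 2c_0$ together with the fact that $\vartheta$ behaves like a truncated signed distance to $\II(t)$, so that, by the $C^2$-regularity of $\II(t)$ and the non-contact condition~\eqref{eq:noContactPoints}, the set $\{|\vartheta|<\eps\}$ lies inside an $O(\eps)$-tubular neighborhood of $\II(t)$ and hence has Lebesgue measure bounded by $C(\chi,v,T)\eps$. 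Summing the two contributions yields the desired $L^1$-bound of order $\eps$.

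\textbf{Main obstacle.} The substantive work is entirely done by Theorem~\ref{theo:mainResult}; the only genuinely delicate point in the corollary is that the weight $\vartheta$ in $E_{\mathrm{vol}}$ vanishes on the interface, so $E_{\mathrm{vol}}$ alone cannot see the transition layer. The near/far splitting resolves this, but it is precisely here that one needs the smoothness of the limit interface, together with~\eqref{eq:noContactPoints}, in order to secure the matching $O(\eps)$ volume bound; any weakening of the regularity of $\II$ would force a more careful treatment of the transition zone.
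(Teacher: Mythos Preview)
Your proof is correct. The velocity part is identical to the paper's (implicit) use of the coercivity~\eqref{eq:coercivityRelEntropy1}. For the phase-indicator part, however, you take a genuinely different route from the paper. The paper does \emph{not} split into near and far regions at the scale~$\eps$; instead it invokes the $\eps$-independent inequality
\[
\big\|c_0\chi(\cdot,t) - \psi_\eps(\cdot,t)\big\|_{L^1(\Omega)}^2 \leq C\,E_{\mathrm{vol}}[\varphi_\eps|\chi](t),
\]
established via a slicing argument along normal rays to~$\II(t)$ together with the one-dimensional bound $\big(\int_0^\delta g\big)^2 \leq 2\|g\|_{L^\infty}\int_0^\delta r\,g\,dr$ for $g\geq 0$ (citing \cite[Proof of Theorem~1, Step~2]{Fischer2020b}). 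Taking square roots and using $E_{\mathrm{vol}}=O(\eps^2)$ then gives the rate directly. What the paper's approach buys is robustness: the inequality holds for \emph{arbitrary} data and automatically yields the sharp $L^1$-rate for any smallness scale of~$E_{\mathrm{vol}}$, whereas your splitting is tuned to the level~$\eps$ and would need the cutoff re-chosen for a different rate. Conversely, your argument is more elementary---it avoids the slicing machinery entirely and relies only on the coercivity~\eqref{eq:conditionsWeight3} of~$\vartheta$ and the tubular-neighborhood volume bound---so it is a perfectly acceptable alternative for the corollary as stated. (Minor point: $|g_\eps|\leq c_0$ rather than $2c_0$, since $\psi_\eps\in[0,c_0]$.)
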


The proof of Theorem~\ref{theo:mainResult} is given in Section~\ref{sec:proofMainResult}
and is based on the following key ingredients. As a preliminary, we provide
throughout the remainder of this section precise definitions for
the solution concepts associated with the system of 
PDEs~\eqref{eq:phaseFieldModel1}--\eqref{eq:phaseFieldModel6}
and~\eqref{eq:sharpInterfaceLimit1}--\eqref{eq:sharpInterfaceLimit8}, respectively.
Section~\ref{sec:errorFunctionals} introduces the definition of
the two error functionals for which Theorem~\ref{theo:mainResult}
claims the stability estimates~\eqref{eq:gronwallEstimateRelativeEntropy}
and~\eqref{eq:gronwallEstimatePhaseError}, respectively. 
We also collect several important coercivity properties of these
error functionals. These in turn are needed to appropriately
estimate preliminary bounds for the time evolution of
the two error functionals as derived in Section~\ref{sec:timeEvolRelEntropy}
and Section~\ref{sec:timeEvolWeight}, respectively.

Starting point of our journey is the following precise
concept of weak solutions for the phase-field model.

\begin{definition}[Weak solutions of the Navier--Stokes/Allen--Cahn 
system~\eqref{eq:phaseFieldModel1}--\eqref{eq:phaseFieldModel6}]
\label{def:weakSolPhaseFieldModel}
\begin{subequations}
Let~$d \in \{2,3\}$, let $\Omega\subset\Rd$
be a bounded domain with orientable and $C^2$-boundary,
and consider a finite time horizon $0 < \Tweak < \infty$.
Let~$\eps \in (0,1)$, and consider an initial velocity field $v_{\eps,0} \in H^1(\Omega)$
in combination with an initial order parameter $\varphi_{\eps,0} \in H^1(\Omega)$
such that $\varphi_{\eps,0} \in [-1,1]$ holds true $\mathcal{L}^d$-a.e.\
in~$\Omega$, $\nabla\cdot v_{\eps,0} = 0$ in the distributional sense in~$\Omega$,
as well as $(\varphi_{\eps,0}{-}(-1),v_{\eps,0}) \in H^1_0(\Omega,\mathbb{R}{\times}\Rd)$.

A pair of measurable maps $(\varphi_\eps,v_\eps)\colon\Omega{\times}(0,\Tweak) \to \mathbb{R}{\times}\Rd$
is called a \emph{weak solution of the Navier--Stokes/Allen--Cahn 
system~\emph{\eqref{eq:phaseFieldModel1}--\eqref{eq:phaseFieldModel6}} with time
horizon~$\Tweak$ and initial data~$(\varphi_{\eps,0},v_{\eps,0})$} 
if the following conditions are satisfied:
\begin{itemize}[leftmargin=0.7cm]
\item[i)] In terms of regularity, it has to hold
\begin{align}
\label{eq:conditionsWeakSolPhaseField1}
v_\eps &\in L^2\big(0,\Tweak;H^1(\Omega;\Rd)\big) \cap L^\infty\big(0,\Tweak;L^2(\Omega;\Rd)\big),
\\
\label{eq:conditionsWeakSolPhaseField2}
\varphi_\eps &\in H^1\big(0,\Tweak;L^2(\Omega;[-1,1])\big) \cap L^2\big(0,\Tweak;H^2(\Omega)\big),
\end{align}
such that in addition~\eqref{eq:phaseFieldModel6} holds in form of
\begin{align}
\label{eq:conditionsWeakSolPhaseField3}
\big(\varphi_{\eps}(\cdot,t){-}(-1),v_{\eps}(\cdot,t)\big) \in H^1_0(\Omega,\mathbb{R}{\times}\Rd)
\quad\text{for a.e.\ } t \in (0,\Tweak).
\end{align}
\item[ii)] The velocity field~$v_\eps$ is a solution 
					 of~\eqref{eq:phaseFieldModel1}--\eqref{eq:phaseFieldModel2} 
					 and~\eqref{eq:phaseFieldModel4} in the sense that
					 \begin{align}
					 \label{eq:conditionsWeakSolPhaseField4}
					 \nonumber
					 &\int_{\Omega} v_\eps(\cdot,T) \cdot \eta(\cdot,T) \,dx 
					 - \int_{\Omega} v_{\eps,0} \cdot \eta(\cdot,0) \,dx
					 \\&
					 = \int_0^T\int_{\Omega} v_\eps \cdot \partial_t\eta \,dx dt
					 + \int_0^T\int_{\Omega} v_\eps \otimes v_\eps : \nabla\eta \,dx dt
					 \\&~~~ 
					 \nonumber
					 - \int_0^T\int_{\Omega} \nabla v_\eps : \nabla \eta \,dx dt
					 + \int_0^T\int_{\Omega} \eps\nabla\varphi_\eps\otimes\nabla\varphi_\eps : \nabla \eta \,dx dt
					 \end{align}
					 holds true for a.e.\ $T \in (0,\Tweak)$ and all $\eta \in 
					 C^\infty_c([0,\Tweak);C^\infty_c(\Omega;\Rd))$ with $\nabla\cdot\eta=0$,
					 as well as
					 \begin{align}
					 \label{eq:conditionsWeakSolPhaseField5}
					 (\nabla\cdot v_\eps)(\cdot,t) = 0 
					 \end{align}
					 for a.e.\ $t \in (0,\Tweak)$ in the distributional sense in~$\Omega$.
\item[iii)] The order parameter $\varphi_\eps$ satisfies~\eqref{eq:phaseFieldModel3} 
					  and~\eqref{eq:phaseFieldModel5} in form of
						\begin{align}
						\label{eq:conditionsWeakSolPhaseField6}
						\partial_t\varphi_\eps + (v_\eps\cdot\nabla)\varphi_\eps 
						&= \Delta\varphi_\eps - \smash{\frac{1}{\eps^2}W'(\varphi_\eps)}
						&&\text{a.e.\ in }  \Omega \times (0,\Tweak),
						\\
						\label{eq:conditionsWeakSolPhaseField7}
						\varphi_\eps(\cdot,t) &\to \varphi_{\eps,0} 
						&&\text{strongly in } L^2(\Omega) \text{ as } t \downarrow 0.
						\end{align}
\item[iv)] Defining for every admissible pair~$(\varphi,v)$ the total energy functional by
\begin{align}
\label{eq:energyFunctional}
E[\varphi,v] := \int_{\Omega}\frac{1}{2}|v|^2 \,dx
+ \int_{\Omega} \frac{\eps}{2}|\nabla\varphi|^2 + 
\frac{1}{\eps}W\big(\varphi\big) \,dx, 
\end{align}
sharp energy dissipation is required in form of
\begin{equation}
\label{eq:energyDissipEquality}
\begin{aligned}
&E[\varphi_\eps(\cdot,T),v_\eps(\cdot,T)]
+ \int_0^T\int_{\Omega} \big|\nabla v_\eps\big|^2 
+ \eps\big|\big(\partial_t{+}(v_\eps\cdot\nabla)\big)\varphi_\eps\big|^2 \,dx dt
\\&
\leq E[\varphi_{\eps,0},v_{\eps,0}]
\end{aligned}
\end{equation}
for a.e.\ $T \in (0,\Tweak)$.
\end{itemize}
\end{subequations}
\end{definition}

We conclude this section with a precise concept of strong solutions
for the sharp interface limit model.

\begin{definition}[Strong solutions of the sharp interface limit 
system~\eqref{eq:sharpInterfaceLimit1}--\eqref{eq:sharpInterfaceLimit8}]
\label{def:strongSolSharpInterfaceLimit}
\begin{subequations}
Let~$d \in \{2,3\}$, let $\Omega\subset\Rd$
be a bounded domain with orientable and $C^2$-boundary,
and consider a finite time horizon $0 < \Tstrong < \infty$.
Consider an initial velocity field $v_{0} \in (H^1_0 \cap C^1)(\Omega;\Rd)$
in combination with an initial phase indicator $\chi_{0} \in BV(\Omega;\{0,1\})$
such that $\nabla\cdot v_{0} = 0$ pointwise in~$\Omega$, the set
$\Omega_+(0):=\{\chi_0 {=} 1\}$ yields an open subset of~$\Omega$ consisting of finitely
many connected components with $\overline{\Omega_+(0)} \subset \Omega$,
the associated boundary~$\partial\Omega_+(0)$ is orientable and uniformly of class~$C^{3}$, 
and finally the following compatibility conditions hold true:
\begin{align*}
\jump{ v _0 }=0 ~&\text {on}~ \II(0),\\
(\mathrm{Id} {-} \no_{\II(0)} \otimes \no_{\II(0)})
\jump{    \nabla^\mathrm{sym} v _0  \cdot\no_{\II(0)}  }=0~&\text{on}~\II(0).
\end{align*}

A pair of measurable maps $(\chi,v)\colon\Omega{\times}(0,\Tstrong) \to \{0,1\}{\times}\Rd$
is called a \emph{strong solution of the sharp interface limit 
system~\emph{\eqref{eq:sharpInterfaceLimit1}--\eqref{eq:sharpInterfaceLimit8}} 
with time horizon~$\Tstrong$ and initial data~$(\chi_0,v_0)$} 
if the following requirements hold true:
\begin{itemize}[leftmargin=0.7cm]
\item[i)] The fluid velocity~$v$ is subject to the regularity
					\begin{align}
					\label{eq:conditionsStrongSolSharpInterfaceLimit1}
					v \in H^1\big(0,\Tstrong;L^2(\Omega;\Rd)\big)
					\cap L^2\big(0,\Tstrong;H^1(\Omega;\Rd)\big)
					\end{align}
					such that the corresponding boundary condition~\eqref{eq:sharpInterfaceLimit8}
					as well as the corresponding initial condition~\eqref{eq:sharpInterfaceLimit6}
					are satisfied in form of
					\begin{align}
					\label{eq:conditionsStrongSolSharpInterfaceLimit2}
					v(\cdot,t) &\in H^1_0(\Omega;\Rd) &&\text{for a.e.\ } t \in (0,T_*),
					\\
					\label{eq:conditionsStrongSolSharpInterfaceLimit3}
					v(\cdot,t) &\to v_0 && \text{strongly in } L^2(\Omega) \text{ as } t \downarrow 0.
					\end{align}
					Furthermore, for each $T \in [0,T_*)$ there exists
					$C=C(T) \in (0,\infty)$ such that
					\begin{align}
					\label{eq:conditionsStrongSolSharpInterfaceLimit4}
					v &\in L^\infty\big(0,T;C^{1}(\Omega;\Rd)\big),
					\end{align}
					with a corresponding bound on the highest-order derivative of the form
					\begin{align}
					\label{eq:conditionsStrongSolSharpInterfaceLimit6}
					\|\nabla v\|_{L^\infty(\Omega{\times}(0,T))} \leq C.
					\end{align}
\item[ii)] The velocity field~$v$ solves~\eqref{eq:sharpInterfaceLimit1}--\eqref{eq:sharpInterfaceLimit2}
					 and~\eqref{eq:sharpInterfaceLimit4}--\eqref{eq:sharpInterfaceLimit5} in the sense that
					 \begin{align}
					 \label{eq:conditionsStrongSolSharpInterfaceLimit7}
					 \nonumber
					 &\int_{\Omega} v(\cdot,T) \cdot \eta(\cdot,T) \,dx 
					 - \int_{\Omega} v_0 \cdot \eta(\cdot,0) \,dx
					 \\&
					 = \int_0^T\int_{\Omega} v \cdot \partial_t\eta \,dx dt
					 - \int_0^T\int_{\Omega} \eta \cdot (v \cdot \nabla) v \,dx dt
					 \\&~~~ 
					 \nonumber
					 - \int_0^T\int_{\Omega} \nabla v : \nabla \eta \,dx dt
					 + c_0\int_0^T\int_{\Omega} \frac{\nabla\chi}{|\nabla\chi|} 
					 \otimes \frac{\nabla\chi}{|\nabla\chi|} : \nabla \eta \,d|\nabla\chi| dt
					 \end{align}
					 holds true for a.e.\ $T \in (0,\Tstrong)$ and all $\eta \in 
					 C^\infty_c([0,\Tstrong);C^\infty_c(\Omega;\Rd))$ with $\nabla\cdot\eta=0$,
					 as well as
					 \begin{align}
					 \label{eq:conditionsStrongSolSharpInterfaceLimit8}
					 (\nabla\cdot v)(\cdot,t) = 0 
					 \end{align}
					 for a.e.\ $t \in (0,\Tstrong)$ in the distributional sense 
					 in~$\Omega$. 
\item[iii)] Define for every $t \in (0,\Tstrong)$ the set $\Omega_+(t) := \{\chi(\cdot,t){=}1\}$.
            There then exists a map $\Psi\colon 
						\overline{\Omega} {\times} [0,T_*) \to \overline{\Omega}$ such that
						$\Psi(\cdot,0) = \mathrm{Id}$, $\Psi(\cdot,t) \in 
						C^{3}\text{-Diffeo}(\overline{\Omega},\overline{\Omega})$ for all $t \in (0,\Tstrong)$,
						and $\Omega_+(t) = \Psi(\Omega_+(0),t)$	for all $t \in (0,\Tstrong)$.
						With respect to regularity in time, we require that for all $T \in (0,\Tstrong)$
						it holds
						\begin{align}
						\label{eq:conditionsStrongSolSharpInterfaceLimit9}
						\Psi &\in C^1\big([0,T];C^{1}(\overline{\Omega};\Rd)\big)
						\cap C\big([0,T];C^{3}(\overline{\Omega};\Rd)\big).
						\end{align}
            Note that the corresponding initial condition~\eqref{eq:sharpInterfaceLimit7}
						is thus a consequence of~\eqref{eq:conditionsStrongSolSharpInterfaceLimit9}.
            Finally, the corresponding boundary condition~\eqref{eq:sharpInterfaceLimit8}
						is satisfied in form of
						\begin{align}
						\label{eq:conditionsStrongSolSharpInterfaceLimit10}
						\overline{\Omega_+(t)} &\subset \Omega &&\text{for all } t \in (0,T_*).
						\end{align}
\item[iv)] The geometric evolution equation~\eqref{eq:sharpInterfaceLimit3}
					 is required to be satisfied in its strong form given by~\eqref{eq:geomEvolutionSharpInterface},
					 where the space-time interface $\II={\bigcup_{t \in [0,\Tstrong)} \II(t) {\times} \{t\}}$
					 is defined by $\II(t) := \partial\Omega_+(t)$, $t \in [0,\Tstrong)$.
\end{itemize}
\end{subequations}
\end{definition}

In the appendix, we provide some detailed comments on how to obtain
such strong solutions for the sharp interface limit model starting from the
work of Abels and Moser~\cite{zbMATH06951007}.

\section{The relative energy functional}\label{sec:errorFunctionals}

\subsection{Definition of the relative energy functional}
\label{subsec:relEnergy}
In this subsection, we follow ideas first developed in~\cite{Fischer2020b}.
We start by defining the appropriate $BV$-pendant of~$c_0\chi$ at
the level of the phase-field model via
\begin{align}\label{def psi_eps}
\psi_\eps(x,t) := \int_{-1}^{\varphi_\eps(x,t)} \sqrt{2W}(r) \,dr,
\quad (x,t) \in \Omega \times (0,\Tweak).
\end{align}
We then introduce a ``unit-normal vector field'' $\no_\eps \in \mathbb{S}^{d-1}$ by means of
\begin{align}
\label{eq_defWeakNormal}
\no_\eps(\cdot,t) := 
\begin{cases}
\frac{\nabla\varphi_\eps(\cdot,t)}{|\nabla\varphi_\eps(\cdot,t)|} 
& \text{if } \nabla\varphi_\eps(\cdot,t) \neq 0, \\
\mathrm{s} & \text{else},
\end{cases}
\qquad t \in (0,\Tweak),
\end{align}
with $\mathrm{s}\in\mathbb{S}^{d-1}$ being a fixed but otherwise arbitrary unit vector. 
Note that because of the previous two definitions we always have the relations
\begin{align}
\label{eq:identitiesNormals}
\no_\eps|\nabla\varphi_\eps| = \nabla\varphi_\eps
\quad\text{and}\quad
\no_\eps|\nabla\psi_\eps| = \nabla\psi_\eps
\quad\text{throughout}\quad \Omega \times (0,\Tweak).
\end{align}

We now define a measure for the difference between the phase-field approximation
$(\varphi_\eps,v_\eps)$ and the classical solution $(\chi,v)$ for the sharp interface
limit model as follows: for $t \in (0,\Tstrong) \subset (0,\Tweak)$ let
\begin{align}
\label{eq:relativeEntropyFunctional}
&E[\varphi_\eps,v_\eps|\chi,v](t)
\\&
\nonumber
:= \int_{\Omega} \frac{1}{2}\big|(v_\eps {-} v)(\cdot,t)\big|^2 \,dx 
+ \int_{\Omega} \frac{\eps}{2}\big|\nabla\varphi_\eps(\cdot,t)\big|^2 + 
\frac{1}{\eps}W\big(\varphi_\eps(\cdot,t)\big) 
- (\xi\cdot\nabla\psi_\eps)(\cdot,t) \,dx.
\end{align}
The vector field~$\xi$ associated with the strong sharp interface
limit solution $(\chi,v)$ is yet to be determined.
Note already that the above functional has 
a form reminiscent of that of a relative entropy. Indeed,
\begin{align}
\label{eq:errorRepByRelEntropyStructure}
E[\varphi_\eps,v_\eps|\chi,v] 
= E[\varphi_\eps,v_\eps] - \int_{\Omega} v_\eps\cdot v + \int_{\Omega} \frac{1}{2}|v|^2 \,dx
- \int_{\Omega} \xi\cdot\nabla\psi_\eps \,dx,
\end{align}
where we dropped for notational convenience the dependence on the time variable.
Continuing in this fashion, a straightforward computation moreover shows
\begin{equation}
\label{eq:basicCoercivity}
\begin{aligned}
&E[\varphi_\eps,v_\eps|\chi,v]  
\\
&= \int_{\Omega} \frac{1}{2}|v_\eps - v|^2 +
\int_{\Omega} \frac{1}{2}\Big(\sqrt{\eps}|\nabla\varphi_\eps|
-\frac{1}{\sqrt{\eps}}\sqrt{2W(\varphi_\eps)}\Big)^2 
+\int_{\Omega} (1-\xi\cdot\no_\eps)|\nabla\psi_\eps|.
\end{aligned}
\end{equation}
Ensuring non-negativity of $E[\varphi_\eps,v_\eps|\chi,v]$ thus motivates
$|\xi|\leq 1$ as an absolute minimal assumption on the vector field $\xi$.

The main goal of our approach is to derive a stability estimate on $E[\varphi_\eps,v_\eps\,|\,\chi,v]$
in terms of the initial distance $E_0 := E[\varphi_{\eps,0},v_{\eps,0}|\chi_0,v_0]$ by a Gronwall argument,
cf.\ the stability estimate~\eqref{eq:gronwallEstimateRelativeEntropy} in Theorem~\ref{theo:mainResult}.  
This in turn necessitates control on the time evolution
of the relative energy. In particular, we will need an appropriate control on the time
evolution of the vector field~$\xi$. To this end, it turns out to be beneficial (mostly for clarity of
exposition and efficient organization of terms in the relative energy inequality) 
to introduce a second vector field~$B$. One should keep in mind the following interpretation
of the pair $(\xi,B)$: the vector field~$\xi$ will represent a suitable extension of the 
unit normal vector field~$\no_{\II}$ for the smoothly evolving sharp interface~$\II$,
whereas~$B$ shall denote a suitable extension of the associated
normal velocity vector of~$\II$. More precisely, we impose the following 
conditions on the pair $(\xi,B)$ which will turn out to be sufficient.

For every $T \in [0,\Tstrong)$ there are $c=c(\chi,T) \in (0,1)$
and $C=C(\chi,v,T) \in (0,\infty)$ such that it holds
(employing the convenient notation $\dist(\cdot,\II)$ 
representing the space-time map $\overline{\Omega}{\times}[0,\Tstrong) \ni (x,t)
\mapsto \dist(x,\II(t))$):
\begin{subequations}
\begin{itemize}[leftmargin=0.4cm]
\item (Regularity estimates) In terms of qualitative regularity, it has to hold
\begin{align}
\label{eq:conditionsXiB1}
\xi &\in C^{0,1}\big([0,T];L^\infty(\Omega;\Rd)\big) \cap 
L^\infty\big([0,T];C^{1,1}_c(\Omega;\Rd)\big),
\\
\label{eq:conditionsXiB2}
B &\in L^\infty\big([0,T];C^{0,1}_c(\overline{\Omega};\Rd)\big),
\end{align}
with a bound on the corresponding highest-order  
derivatives in form of
\begin{align}
\label{eq:conditionsXiB3}
\|(\partial_t\xi,\nabla^2\xi,\nabla B)\|_{L^\infty(\Omega{\times}(0,T))} &\leq C.
\end{align}
\item (Coercivity and consistency) It holds 
\begin{align}
\label{eq:conditionsXiB4}
|\xi| &\leq 1{-}c\min\{\dist^2(\cdot,\II),1\}
&&\text{a.e.\ on } \Omega{\times}[0,T]
\\
\label{eq:conditionsXiB5}
\xi &= \no_{\II}
\text{ and }
\nabla\cdot \xi = -H_{\II} 
&&\text{on } \II.
\end{align}
\item (Approximate transport of $\xi$ by $B$) We have
\begin{align}
\label{eq:conditionsXiB6}
\big|\partial_t\xi + (B\cdot\nabla)\xi + (\nabla B)^\mathsf{T}\xi\big| 
&\leq C\min\{\dist(\cdot,\II),1\}
&&\text{a.e.\ on } \Omega{\times}[0,T],
\\
\label{eq:conditionsXiB7}
\big|\xi\cdot(\partial_t + B\cdot\nabla)\xi\big| 
&\leq C\min\{\dist^2(\cdot,\II),1\}
&&\text{a.e.\ on } \Omega{\times}[0,T].
\end{align} 
\item (Interpretation of $B$ as a normal velocity) It holds
\begin{align}
\label{eq:conditionsXiB8}
\big|(B{-}v)\cdot\xi + \nabla\cdot\xi\big| 
&\leq C\min\{\dist(\cdot,\II),1\}
&&\text{a.e.\ on } \Omega{\times}[0,T],
\\
\label{eq:conditionsXiB9}
\big|\xi\cdot(\xi\cdot\nabla) B \big|
&\leq C\min\{\dist(\cdot,\II),1\}
&&\text{a.e.\ on } \Omega{\times}[0,T].
\end{align}
\end{itemize}
\end{subequations}

\subsection{Coercivity properties of the relative energy functional}
We collect several useful coercivity estimates for $E[\varphi_\eps,v_\eps|\chi,v]$,
dropping in the process again for notational convenience the dependence
on the time variable. The proof of these estimates is provided 
afterwards. 
 
\begin{lemma}
Let the assumptions and notation of Subsection~\ref{subsec:relEnergy} be in place.
First, the relative energy~$E[\varphi_\eps,v_\eps|\chi,v]$ 
provides control on the error in the fluid velocity by means of
			\begin{align}
			\label{eq:coercivityRelEntropy1}
			\int_{\Omega} \frac{1}{2} |v_\eps-v|^2 \,dx \leq E[\varphi_\eps,v_\eps|\chi,v].
			\end{align}
Second, it entails a ``tilt-excess'' type control on the geometry in the form of
			\begin{align}
			\label{eq:coercivityRelEntropy2}
			\int_{\Omega} (1-\no_\eps\cdot\xi)|\nabla\psi_\eps| \,dx \leq E[\varphi_\eps,v_\eps|\chi,v].
			\end{align}
			Third, one obtains control on the lack of equipartition of energy through
						\begin{align}
			\label{eq:coercivityRelEntropy3}
			\int_{\Omega} \frac{1}{2}\Big(\sqrt{\eps}|\nabla\varphi_\eps|
			-\frac{1}{\sqrt{\eps}}\sqrt{2W(\varphi_\eps)}\Big)^2 \,dx
			\leq E[\varphi_\eps,v_\eps|\chi,v].
			\end{align}
Fourth, for every $T \in [0,\Tstrong)$ there exists
a constant $C = C(\chi,v,T) \in (0,\infty)$ such that
for all times in $[0,T]$ it holds
			\begin{align}
			\label{eq:coercivityRelEntropy4}
			\int_{\Omega} |\no_\eps-\xi|^2|\nabla\psi_\eps| \,dx
			+ \int_{\Omega} \min\{\dist^2(\cdot,\II),1\} |\nabla\psi_\eps| \,dx
			&\leq CE[\varphi_\eps,v_\eps|\chi,v],
			\\
			\label{eq:coercivityRelEntropy5}
			\int_{\Omega} |\no_\eps-\xi|^2\eps|\nabla\varphi_\eps|^2 \,dx
			+ \int_{\Omega} \min\{\dist^2(\cdot,\II),1\} \eps|\nabla\varphi_\eps|^2 \,dx
			&\leq CE[\varphi_\eps,v_\eps|\chi,v],
			\end{align}
as well as finally
			\begin{align}
			\label{eq:coercivityRelEntropy6}
			\int_{\Omega} \big(\min\{\dist(\cdot,\II),1\}+\sqrt{1 {-} \no_\eps\cdot\xi} \big)
			\big|\eps|\nabla\varphi_\eps|^2-|\nabla\psi_\eps|\big| \,dx
			\leq CE[\varphi_\eps,v_\eps|\chi,v].
			\end{align}
\end{lemma}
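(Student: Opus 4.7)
The proof rests almost entirely on the decomposition~\eqref{eq:basicCoercivity}, which exhibits the relative energy as the sum of three manifestly nonnegative terms under the sole assumption $|\xi|\le 1$ (the nonnegativity of the third summand uses $|\no_\eps|=1$ together with the chain-rule identity $\nabla\psi_\eps = \no_\eps|\nabla\psi_\eps|$ from~\eqref{eq:identitiesNormals}). Estimates~\eqref{eq:coercivityRelEntropy1}--\eqref{eq:coercivityRelEntropy3} then follow at once by dropping the other two summands.

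For~\eqref{eq:coercivityRelEntropy4}, the elementary pointwise identity
\begin{equation*}
|\no_\eps - \xi|^2 = 1 - 2\no_\eps\cdot\xi + |\xi|^2 \le 2(1 - \no_\eps\cdot\xi),
\end{equation*}
valid since $|\xi|\le 1 = |\no_\eps|$, combined with~\eqref{eq:coercivityRelEntropy2} bounds the first integral. For the distance-weighted term, the strict coercivity hypothesis~\eqref{eq:conditionsXiB4} yields $1 - \no_\eps\cdot\xi \ge 1 - |\xi| \ge c\min\{\dist^2(\cdot,\II),1\}$ pointwise, and a second appeal to~\eqref{eq:coercivityRelEntropy2} (with constant $1/c$) finishes this bound.

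The remaining estimates~\eqref{eq:coercivityRelEntropy5} and~\eqref{eq:coercivityRelEntropy6} hinge on the factorization
\begin{equation*}
\eps|\nabla\varphi_\eps|^2 - |\nabla\psi_\eps| = \sqrt{\eps}|\nabla\varphi_\eps|\Bigl(\sqrt{\eps}|\nabla\varphi_\eps| - \tfrac{1}{\sqrt{\eps}}\sqrt{2W(\varphi_\eps)}\Bigr),
\end{equation*}
which together with Cauchy--Schwarz and~\eqref{eq:coercivityRelEntropy3} bounds $\int_{\Omega} w\,\bigl|\eps|\nabla\varphi_\eps|^2 - |\nabla\psi_\eps|\bigr|\dx$ by $\bigl(\int_{\Omega} w^2\,\eps|\nabla\varphi_\eps|^2\dx\bigr)^{1/2}\sqrt{2E}$ for any nonnegative weight $w$. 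For~\eqref{eq:coercivityRelEntropy5} I split $\eps|\nabla\varphi_\eps|^2 = |\nabla\psi_\eps| + (\eps|\nabla\varphi_\eps|^2 - |\nabla\psi_\eps|)$ and test against each of $w \in \{|\no_\eps-\xi|^2,\,\min\{\dist^2(\cdot,\II),1\}\}$; the $|\nabla\psi_\eps|$-part is handled by~\eqref{eq:coercivityRelEntropy4}, while the defect part is handled by the Cauchy--Schwarz bound just stated, after noting $w^2\le Cw$ for each of the two weights (one is bounded by $4$, the other by $1$). Young's inequality then absorbs the resulting copy of $\int_{\Omega} w\,\eps|\nabla\varphi_\eps|^2\dx$. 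For~\eqref{eq:coercivityRelEntropy6} the trivial bound $(\min\{\dist(\cdot,\II),1\}+\sqrt{1-\no_\eps\cdot\xi})^2 \le 2\min\{\dist^2(\cdot,\II),1\}+2(1-\no_\eps\cdot\xi)$ reduces the matter, via the same Cauchy--Schwarz step, to controlling $\int_{\Omega}(1-\no_\eps\cdot\xi)\,\eps|\nabla\varphi_\eps|^2\dx$ by $CE$; this auxiliary bound is produced by an identical split-and-absorb procedure, now starting from~\eqref{eq:coercivityRelEntropy2} for the $|\nabla\psi_\eps|$-portion and using $(1-\no_\eps\cdot\xi)^2 \le 2(1-\no_\eps\cdot\xi)$ for the defect portion. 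The only conceptual subtlety lies in keeping track of this self-referential absorption loop; once unpacked, the whole argument reduces to pointwise inequalities and two applications of Young's.
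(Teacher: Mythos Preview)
Your proof is correct and follows essentially the same route as the paper's. The only organizational difference is that the paper derives the auxiliary bound $\int_\Omega (1-\no_\eps\cdot\xi)\,\eps|\nabla\varphi_\eps|^2\,dx \le CE$ first via a pointwise Young's inequality (yielding $\eps|\nabla\varphi_\eps|^2 \le 2|\nabla\psi_\eps| + (\sqrt{\eps}|\nabla\varphi_\eps|-\tfrac{1}{\sqrt{\eps}}\sqrt{2W})^2$) and then deduces both~\eqref{eq:coercivityRelEntropy5} and~\eqref{eq:coercivityRelEntropy6} from it, whereas you run the integral Cauchy--Schwarz/absorption loop separately for each weight; for~\eqref{eq:coercivityRelEntropy6} the paper also collapses the two weights into one by observing $\min\{\dist(\cdot,\II),1\}\le C\sqrt{1-\no_\eps\cdot\xi}$ from~\eqref{eq:conditionsXiB4}, which is marginally shorter than your $(a+b)^2\le 2a^2+2b^2$ split.
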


\begin{proof}
The assertions~\eqref{eq:coercivityRelEntropy2}--\eqref{eq:coercivityRelEntropy6} 
are essentially already contained
and proved in~\cite[Subsection~2.3]{Fischer2020b}. For the sake of completeness,
we re-produce the straightforward argument here.

By~\eqref{def psi_eps} and~\eqref{eq_defWeakNormal}, we can 
write~\eqref{eq:relativeEntropyFunctional} as the sum of three integrals 
with each featuring a non-negative integrand:  
\begin{align*}
E[\varphi_\eps,v_\eps|\chi,v](t)
&= \int_{\Omega} \frac{1}{2}\big|(v_\eps {-} v)(\cdot,t)\big|^2 \,dx  \\
&+\int_{\Omega}|\nabla \psi_\eps(\cdot,t)|
- (\xi\cdot\nabla\psi_\eps)(\cdot,t) \,dx \\
&+ \int_{\Omega} \frac{\eps}{2}\big|\nabla\varphi_\eps(\cdot,t)\big|^2 + 
\frac{1}{\eps}W\big(\varphi_\eps(\cdot,t)\big) -|\nabla \psi_\eps(\cdot,t)| \, dx.
\end{align*}
Using~\eqref{def psi_eps} and the chain rule, we can complete the square in 
the last integral above, and this proves~\eqref{eq:coercivityRelEntropy1}, 
\eqref{eq:coercivityRelEntropy2} and~\eqref{eq:coercivityRelEntropy3}.
The estimate~\eqref{eq:coercivityRelEntropy4} follows from ~\eqref{eq:conditionsXiB4} 
and~\eqref{eq:coercivityRelEntropy2}.

Next, adding zero and employing Young's inequality in the form of
$$
\begin{aligned}
\	\eps\left|\nabla \varphi_\eps\right|^{2} &=\left|\nabla \psi_\eps\right|+\sqrt{\varepsilon}\left|\nabla \varphi_\eps\right|\left(\sqrt{\varepsilon}\left|\nabla \varphi_\eps\right|-\frac{1}{\sqrt{\varepsilon}} \sqrt{2 W\left(\varphi_\eps\right)}\right) \\
& \leq\left|\nabla \psi_\eps\right|+\frac{1}{2} \varepsilon\left|\nabla \varphi_\eps\right|^{2}+\frac{1}{2}\left(\sqrt{\varepsilon}\left|\nabla \varphi_\eps\right|-\frac{1}{\sqrt{\varepsilon}} \sqrt{2 W\left(\varphi_\eps\right)}\right)^{2}
\end{aligned}
$$
we infer 
\begin{align}\label{minusorcali}
&\int_\O(1-\no_\eps\cdot\xi) \varepsilon\left|\nabla \varphi_\eps\right|^{2} d x\nonumber \\
&~~~\leq  2 \int_\O(1-\no_\eps\cdot\xi)\left|\nabla \psi_\eps\right| \,d x+4 \int_\O\left(\sqrt{\varepsilon}\left|\nabla \varphi_\eps\right|-\frac{1}{\sqrt{\varepsilon}} \sqrt{2 W\left(\varphi_\eps\right)}\right)^{2} \,dx\nonumber\\
&\overset{\eqref{eq:coercivityRelEntropy2},\eqref{eq:coercivityRelEntropy3}}\leq   
CE[\varphi_\eps,v_\eps|\chi,v],
\end{align}
which then together with $2(1-\no_\eps\cdot\xi )\geq |\no_\eps-\xi|^2$ 
and~\eqref{eq:conditionsXiB4} leads to~\eqref{eq:coercivityRelEntropy5}. Finally 
\begin{align*}
		&	\int_{\Omega} \min\{\dist(\cdot,\II),1\}
			\big|\eps|\nabla\varphi_\eps|^2-|\nabla\psi_\eps|\big| \,dx\\
			&\overset{\eqref{eq:conditionsXiB4}}\leq  C\int_{\Omega}  \sqrt{1-\no_\eps\cdot\xi}\sqrt{\varepsilon}\left|\nabla \varphi_\eps\right|\left|\sqrt{\varepsilon}\left|\nabla \varphi_\eps\right|-\frac{1}{\sqrt{\varepsilon}} \sqrt{2 W\left(\varphi_\eps\right)}\right|\, dx.
			\end{align*}
So \eqref{eq:coercivityRelEntropy6} is a consequence of \eqref{minusorcali} and the Cauchy--Schwarz inequality.
\end{proof}

\subsection{Error in the phase indicators}\label{subsec:volError}
The second step in the proof consists of deriving an estimate
for the error in the phase indicators~$\varphi_\eps$ and~$\chi$ 
based on the stability estimate for the relative energy $E[\varphi_\eps,v_\eps|\chi,v]$.
This in turn is done by means of a stability estimate \`a la 
Gronwall (cf.\ the estimate \eqref{eq:gronwallEstimatePhaseError} in Theorem~\ref{theo:mainResult}) 
for an appropriate error functional defined for all $t \in [0,T_*)$ as follows:
\begin{align}
\label{eq:defWeightVol}
E_{\mathrm{vol}}[\varphi_\eps|\chi](t) := 
\int_{\Omega} \big|\psi_\eps(\cdot,t) - c_0\chi(\cdot,t)\big| \big|\vartheta(\cdot,t)\big| \,dx.
\end{align}
Here, $\vartheta\colon\overline{\Omega}{\times}[0,T_*)\to [-1,1]$ denotes
a (by the velocity field~$B$ transported) time-dependent weight function
satisfying the following conditions, which in turn will be sufficient to derive
a stability estimate for~$E_{\mathrm{vol}}[\varphi_\eps|\chi]$.

For every $T \in [0,\Tstrong)$ there are $c=c(\chi,T) \in (0,1)$
and $C=C(\chi,v,T) \in (1,\infty)$ such that it holds
(employing again the convenient notation $\dist(\cdot,\II)$ 
representing the space-time map $\overline{\Omega}{\times}[0,\Tstrong) \ni (x,t)
\mapsto \dist(x,\II(t))$):
\begin{subequations}
\begin{itemize}[leftmargin=0.5cm]
\item (Regularity estimates) We have
			\begin{align}
			\label{eq:conditionsWeight1}
			\vartheta \in C^{0,1}\big([0,T];L^\infty(\overline{\Omega})\big) 
										\cap L^\infty\big([0,T];C^{0,1}(\overline{\Omega})\big)
			\end{align}
			with a corresponding estimate for the highest-order derivatives in form of
			\begin{align}
			\label{eq:conditionsWeight2}
			\|(\partial_t\vartheta,\nabla\vartheta)\|_{L^\infty(\Omega{\times}(0,T))} &\leq C.
			\end{align}
\item (Coercivity and consistency) It holds
			\begin{align}
			\label{eq:conditionsWeight3}
			c\min\{\dist(\cdot,\II),1\} \leq |\vartheta| &\leq C\min\{\dist(\cdot,\II),1\}
			&&\text{a.e.\ on } \Omega{\times}[0,T],
			\end{align}
			as well as for all $t \in [0,\Tstrong)$
			\begin{align}
			\label{eq:conditionsWeight4}
			\vartheta(\cdot,t) &< 0 
			&&\text{a.e.\ in the interior of } \{\chi(\cdot,t) {=} 1\} \cap \Omega,
			\\
			\label{eq:conditionsWeight5}
			\vartheta(\cdot,t) &> 0 
			&&\text{a.e.\ in the interior of } \{\chi(\cdot,t) {=} 0\} \cap \Omega.
			\end{align}
\item (Transport equation) Finally, it is required that
		  \begin{align}
			\label{eq:conditionsWeight6}
			\big|\partial_t\vartheta + (B\cdot\nabla)\vartheta\big|
			&\leq C\min\{\dist(\cdot,\II),1\}
			&&\text{a.e.\ on } \Omega{\times}[0,T].
			\end{align}
\end{itemize}
\end{subequations}

As in the case of the relative energy functional,
we conclude the discussion of this subsection 
by recording useful coercivity properties of
the error functional~$E_{\mathrm{vol}}[\varphi_\eps|\chi]$
(dropping yet again the dependence on the time variable).
More precisely, for every $T \in [0,\Tstrong)$ there exists
a constant $C = C(\chi,v,T) \in (0,\infty)$ such that
for all times in $[0,T]$ it obviously holds that
\begin{align}
\label{eq:coercivityBulkError1}
\int_{\Omega} \min\{\dist(\cdot,\II),1\}|\psi_\eps - c_0\chi| \,dx 
\leq CE_{\mathrm{vol}}[\varphi_\eps|\chi].
\end{align}
More importantly, it holds for all $\lambda\in (0,1)$
\begin{align}
\label{eq:coercivityBulkError2}
\int_{\Omega} |c_0\chi {-} \psi_\eps||v_\eps {-} v| \,dx
&\leq \frac{C}{\lambda} \big(E_{\mathrm{vol}}[\varphi_\eps|\chi] {+} E[\varphi_\eps,v_\eps|\chi,v]\big)
+ \lambda\int_{\Omega} |\nabla v_\eps {-} \nabla v|^2 \,dx.
\end{align}

\begin{proof}[Proof of the coercivity estimate~\eqref{eq:coercivityBulkError2}]
Let $\delta=\delta(\chi,T) \in (0,\smash{\frac{1}{2}}]$ be the thickness 
of the tubular neighborhood $B_{\delta}(\II(t))
:= \{x \in \Rd\colon \dist(x,\II(t)) < \delta\}$ of $\II(t)$. For fixed $t\in [0,T]$, we have 
\begin{align*}
&\int_{\O \setminus B_{\delta}(\II(t))} |c_0\chi {-} \psi_\eps||v_\eps {-} v| \,dx\\
&~~\overset{\eqref{eq:coercivityBulkError1}}\leq  \frac 1 \delta\int_\O \min\{\dist(\cdot,\II),1\} |c_0\chi {-} \psi_\eps||v_\eps {-} v| \,dx\\
&~~\overset{\eqref{eq:conditionsWeakSolPhaseField2}}\leq  \frac C \delta\int_\O \sqrt{\min\{\dist(\cdot,\II),1\}} |c_0\chi {-} \psi_\eps|^{1/2}|v_\eps {-} v| \,dx\\
&\overset{\eqref{eq:coercivityBulkError1},\,\eqref{eq:coercivityRelEntropy1}}\leq  \frac{C}{\delta} \big(E_{\mathrm{vol}}[\varphi_\eps|\chi] {+} E[\varphi_\eps,v_\eps|\chi,v]\big).
\end{align*}
It remains to estimate the integral over $B_{\delta}(\II(t))$, and  we shall use the following elementary inequality
\begin{align}\label{def: sobolev 1d}
\left(\int_{0}^{\tau} g(r) d r\right)^{2} \leq 2\|g\|_{L^{\infty}(0, \tau)} \int_{0}^{\tau} r g(r)   d r, \quad \forall g(r) \geq 0, \tau \geq 0.
\end{align}
Recall that $\mathbf{n}_{\II}(y,t)$ is the inward normal vector at $y\in \II(t)$.
By the area formula and the one-dimensional Gagliardo--Nirenberg--Sobolev inequality, 
\begin{align*}
&\int_{B_{\delta}(\II(t))} |c_0\chi {-} \psi_\eps||v_\eps {-} v| \,dx\\
&~\leq C \int_{\II(t)} \int_{-\delta}^\delta |c_0\chi {-} \psi_\eps||v_\eps {-} v|
(y{+}\mathbf{n}_{\II}(y,t) r)\, d r d \mathcal{H}^{d-1}(y)\\
&~\leq C \int_{\II(t)} \sup _{|r| \leq  \delta}|v_\eps{-}v| (y{+}\mathbf{n}_{\II}(y,t) r)\left(\int_{-\delta}^\delta\left|c_0\chi {-} \psi_\eps\right|(y{+}\no_{\II}(y,t) r) d r\right) d \mathcal{H}^{d-1}(y)\\
&\stackrel{\eqref{def: sobolev 1d}}{\leq } C \int_{\II(t)}\left\|(v_\eps{-}v) (y{+}\no_{\II} r)\right\|_{L^{2}((- \delta,  \delta);dr)}^{\frac{1}{2}}\left\| (v_\eps{-}v) (y{+}\no_{\II} r)\right\|_{H^{1}((- \delta,  \delta);dr)}^{\frac{1}{2}} \\
&\qquad~~~~~~~~ \cdot \sqrt{\int_{-\delta}^\delta\left|r \| c_0\chi{-}\psi_\eps\right|(y{+}\no_{\II} r) \,d r } \,d \mathcal{H}^{d-1}(y).
\end{align*}
By H\"{o}lder's and Young's inequalities, the terms in the last step can be controlled 
by the right hand side of~\eqref{eq:coercivityBulkError2}.
\end{proof}

\section{Estimate on the time evolution of the relative energy}\label{sec:timeEvolRelEntropy}
The aim of this section is to derive an estimate representing the key ingredient
for the derivation of the asserted stability estimate~\eqref{eq:gronwallEstimateRelativeEntropy} 
for the relative energy.

\begin{proposition}
\label{prop:relEntropyInequality}
Let the assumptions and notation of Theorem~\ref{theo:mainResult}
be in place, let~$(\xi,B)$ be a pair of vector fields
subject to~\emph{\eqref{eq:conditionsXiB1}--\eqref{eq:conditionsXiB3}} 
as well as~\eqref{eq:conditionsXiB5}, and let the relative energy $E[\varphi_\eps,v_\eps|\chi,v]$
be given by~\eqref{eq:relativeEntropyFunctional}. Define the quantity
\begin{align}
\label{eq:phaseFieldCurvature}
H_\eps := -\eps\Delta\varphi_\eps + \frac{1}{\eps}W'(\varphi_\eps)
\quad \text{in } \Omega \times (0,\Tweak).
\end{align}
(This notation serves as a reminder that $H_\eps$ plays the role of a scalar curvature.)
Then, the following relative energy inequality holds true
\begin{align*}
&E[\varphi_\eps,v_\eps|\chi,v](T)
\\
&\leq E[\varphi_\eps,v_\eps|\chi,v](0)
- \int_{0}^{T}\int_{\Omega} |\nabla v_\eps - \nabla v|^2 \,dx dt
\\&~~~
- \int_{0}^{T}\int_{\Omega} \frac{1}{2\eps} 
	\Big| H_\eps + \sqrt{2W(\varphi_\eps)}(\nabla\cdot\xi) \Big|^2 \,dx dt
\\&~~~
- \int_{0}^{T}\int_{\Omega} \frac{1}{2\eps} 
  \Big| H_\eps - \big((B - v)\cdot\xi\big)\eps|\nabla\varphi_\eps| \Big|^2 \,dx dt
\\&~~~
- \int_{0}^{T}\int_{\Omega} (v_\eps - v)\cdot\big((v_\eps-v)\cdot\nabla\big)v \,dx dt
\\&~~~
- \int_{0}^{T}\int_{\Omega} (c_0\chi - \psi_\eps) \big((v_\eps-v)\cdot\nabla\big)(\nabla\cdot\xi) \,dx dt
\\&~~~
+ \int_{0}^{T}\int_{\Omega} \big|(B-v)\cdot\xi + \nabla\cdot\xi\big|^2\eps|\nabla\varphi_\eps|^2 \,dx dt
\\&~~~
+ \int_{0}^{T}\int_{\Omega} |\nabla\cdot\xi|^2
\bigg(\frac{\sqrt{2W(\varphi_\eps)}}{\sqrt{\eps}}-\sqrt{\eps}|\nabla\varphi_\eps|\bigg)^2 \,dx dt
\\&~~~
- \int_{0}^{T}\int_{\Omega} \frac{1}{\sqrt{\eps}}\big(H_\eps + \sqrt{2W(\varphi_\eps)}(\nabla\cdot\xi)\big)
\big((v-B)\cdot(\no_\eps-\xi)\big)\sqrt{\eps}|\nabla\varphi_\eps| \,dx dt
\\&~~~
-\int_{0}^{T}\int_{\Omega} \big(\partial_t\xi + (B\cdot\nabla)\xi + (\nabla B)^\mathsf{T}\xi\big)
\cdot(\no_\eps-\xi)|\nabla\psi_\eps| \,dx dt
\\&~~~
-\int_{0}^{T}\int_{\Omega} \xi\cdot\big(\partial_t\xi+(B\cdot\nabla)\xi\big) |\nabla\psi_\eps| \,dx dt
\\&~~~
-\int_{0}^{T}\int_{\Omega} \nabla B:(\xi-\no_\eps)\otimes(\xi-\no_\eps) |\nabla\psi_\eps| \,dx dt
\\&~~~
+ \int_{0}^{T}\int_{\Omega} (\nabla\cdot B)(1-\xi\cdot\no_\eps)|\nabla\psi_\eps| \,dx dt
\\&~~~
+ \int_{0}^{T}\int_{\Omega} (\nabla\cdot B) \frac{1}{2}\Big(\sqrt{\eps}|\nabla\varphi_\eps|
-\frac{1}{\sqrt{\eps}}\sqrt{2W(\varphi_\eps)}\Big)^2 \,dx dt
\\&~~~
-\int_{0}^{T}\int_{\Omega} \big(\no_\eps\otimes\no_\eps - \xi\otimes\xi\big) : \nabla B\, 
\big(\eps|\nabla\varphi_\eps|^2 - |\nabla\psi_\eps|\big) \,dx dt
\\&~~~
-\int_{0}^{T}\int_{\Omega} \xi\otimes\xi : \nabla B\, 
\big(\eps|\nabla\varphi_\eps|^2 - |\nabla\psi_\eps|\big) \,dx dt
\end{align*}
for a.e.\ $T \in (0,\Tstrong)$.
\end{proposition}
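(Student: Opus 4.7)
The plan is to exploit the relative-entropy decomposition~\eqref{eq:errorRepByRelEntropyStructure}, namely
\begin{equation*}
E[\varphi_\eps,v_\eps|\chi,v] = E[\varphi_\eps,v_\eps] - \int_\Omega v_\eps\cdot v\,dx
+ \int_\Omega \tfrac12|v|^2\,dx - \int_\Omega \xi\cdot\nabla\psi_\eps\,dx,
\end{equation*}
and to compute or bound the time evolution of each of the four pieces separately:
$E[\varphi_\eps,v_\eps]$ via the sharp energy dissipation~\eqref{eq:energyDissipEquality};
$\int v_\eps\cdot v$ via the weak momentum identity~\eqref{eq:conditionsWeakSolPhaseField4}
for $v_\eps$, tested with $v$ (which is admissible after a standard time-mollification since $v$
is divergence-free and smooth up to the boundary by~\eqref{eq:conditionsStrongSolSharpInterfaceLimit1}--\eqref{eq:conditionsStrongSolSharpInterfaceLimit4});
$\int\tfrac12|v|^2$ via the energy identity obtained by inserting $v$ itself
into~\eqref{eq:conditionsStrongSolSharpInterfaceLimit7}, using $\int v\cdot(v\cdot\nabla)v=0$
thanks to $\nabla\cdot v=0$ and $v|_{\partial\Omega}=0$; and, finally, $\int\xi\cdot\nabla\psi_\eps$
by direct differentiation in time. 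A key identity used throughout is
$(\partial_t + v_\eps\cdot\nabla)\varphi_\eps = -H_\eps/\eps$, which follows
from~\eqref{eq:conditionsWeakSolPhaseField6} and~\eqref{eq:phaseFieldCurvature}; it converts
the Allen--Cahn dissipation $\eps|(\partial_t+v_\eps\cdot\nabla)\varphi_\eps|^2$
in~\eqref{eq:energyDissipEquality} into $H_\eps^2/\eps$ and serves as the source of the two
perfect-square curvature terms in the target inequality.

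The technical heart of the computation is the time derivative of $\int\xi\cdot\nabla\psi_\eps$.
Writing $\partial_t\psi_\eps = \sqrt{2W(\varphi_\eps)}\,\partial_t\varphi_\eps
= -v_\eps\cdot\nabla\psi_\eps - \sqrt{2W(\varphi_\eps)}H_\eps/\eps$ from the Allen--Cahn equation
and performing an integration by parts in space (which is boundary-free thanks to the compact
support of $\xi$ from~\eqref{eq:conditionsXiB1}) produces the two contributions
$\int(\nabla\cdot\xi)v_\eps\cdot\nabla\psi_\eps\,dx$ and
$\int(\nabla\cdot\xi)\sqrt{2W(\varphi_\eps)}H_\eps/\eps\,dx$. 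The temporal piece
$\int\partial_t\xi\cdot\nabla\psi_\eps$ is then rewritten by adding and subtracting
$(B\cdot\nabla)\xi + (\nabla B)^{\mathsf T}\xi$, so that the ``transport defect''
from~\eqref{eq:conditionsXiB6} becomes one of the stated remainders while the $(B\cdot\nabla)\xi$- and
$(\nabla B)^{\mathsf T}\xi$-contributions are redistributed by a further integration by parts, using
$\nabla\psi_\eps = \no_\eps|\nabla\psi_\eps|$ from~\eqref{eq:identitiesNormals} and the systematic
decomposition $\no_\eps = \xi + (\no_\eps-\xi)$, into the various $\nabla B$- and $\nabla\cdot B$-terms
visible on the right-hand side. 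The convective integral $\int(\nabla\cdot\xi)v_\eps\cdot\nabla\psi_\eps$
is split via $v_\eps = (v_\eps{-}v) + v$; the $(v_\eps{-}v)$-part, after one more integration by parts
and the addition and subtraction of $c_0\chi$, yields both the bulk-error remainder
$-\int(c_0\chi-\psi_\eps)((v_\eps{-}v)\cdot\nabla)(\nabla\cdot\xi)$ and a singular-measure contribution
that cancels exactly, thanks to $\nabla\cdot\xi|_\II = -H_\II$ from~\eqref{eq:conditionsXiB5}
together with the surface divergence theorem, the capillary term
$c_0\int\no_\II\otimes\no_\II:\nabla(v_\eps{-}v)\,d|\nabla\chi|$ appearing via the tested momentum
balance~\eqref{eq:conditionsStrongSolSharpInterfaceLimit7}.

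Once all four derivatives have been summed, the final step is the completion of squares.
The dissipation $-H_\eps^2/\eps$ is split into two halves; one half, paired with the cross term
$-\sqrt{2W(\varphi_\eps)}(\nabla\cdot\xi)H_\eps/\eps$ just produced, yields the first perfect
square $-\tfrac{1}{2\eps}|H_\eps + \sqrt{2W(\varphi_\eps)}(\nabla\cdot\xi)|^2$, while the other
half is paired with a cross term of the shape $((B{-}v)\cdot\xi)|\nabla\varphi_\eps|H_\eps$,
engineered by combining the $v$-part of $\int(\nabla\cdot\xi)v_\eps\cdot\nabla\psi_\eps$ with the
capillarity integral $\int\eps\nabla\varphi_\eps\otimes\nabla\varphi_\eps:\nabla v$ appearing
in~\eqref{eq:conditionsWeakSolPhaseField4}, to produce the second perfect square
$-\tfrac{1}{2\eps}|H_\eps - ((B{-}v)\cdot\xi)\eps|\nabla\varphi_\eps||^2$. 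The quadratic defects
left over from these two completions assemble, using the equipartition identity
$2W(\varphi_\eps)/\eps = (\sqrt{2W(\varphi_\eps)}/\sqrt\eps)^2$, into the remainders
$\int|\nabla\cdot\xi|^2(\sqrt{2W(\varphi_\eps)}/\sqrt\eps - \sqrt\eps|\nabla\varphi_\eps|)^2$ and
$\int|(B{-}v)\cdot\xi + \nabla\cdot\xi|^2\eps|\nabla\varphi_\eps|^2$, together with the mixed
cross-product term involving $(H_\eps + \sqrt{2W}(\nabla\cdot\xi))((v{-}B)\cdot(\no_\eps{-}\xi))|\nabla\varphi_\eps|$
that remains from splitting $\no_\eps = \xi + (\no_\eps - \xi)$ in the second cross-term before squaring.
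On the velocity side, the kinetic dissipation $-|\nabla v_\eps|^2$, the cross terms
$+2\int\nabla v_\eps:\nabla v$ coming from the two tested momentum identities, and the term
$-|\nabla v|^2$ from the $v$-energy identity combine into $-|\nabla v_\eps - \nabla v|^2$, while
the mismatched convective contributions reassemble, via $\nabla\cdot v=0$, into
$-(v_\eps{-}v)\cdot((v_\eps{-}v)\cdot\nabla)v$.

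The main obstacle will be the bookkeeping. A large number of integrations by parts must be
executed and aggregated, each relying either on $\nabla\cdot v=\nabla\cdot v_\eps=0$ (to drop the
pressure and to simplify convective terms), on $v,v_\eps\in H^1_0$ (to eliminate the viscous
boundary term), or on $\supp\xi,\supp B\compemb\Omega$
from~\eqref{eq:conditionsXiB1}--\eqref{eq:conditionsXiB2} (to kill boundary terms in the
capillarity derivative), and every resulting integrand has to be carefully matched against one
of the fifteen terms on the right-hand side. Algebraically, the systematic splits
$\no_\eps = \xi + (\no_\eps - \xi)$ and $\eps|\nabla\varphi_\eps|^2 = |\nabla\psi_\eps| +
(\eps|\nabla\varphi_\eps|^2 - |\nabla\psi_\eps|)$ are crucial: they separate the
``leading-order'' contractions aligned with $\xi$ from remainders controlled by the coercivity
estimates~\eqref{eq:coercivityRelEntropy4}--\eqref{eq:coercivityRelEntropy6}. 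Finally, because
of the limited regularity of $(\varphi_\eps,v_\eps)$ in time, the time derivatives above should
be understood as obtained by first mollifying in time the smooth ingredients $(\chi,v,\xi,B)$,
substituting into the weak formulations, and then passing to the limit using the regularity of
the strong solution $(\chi,v)$ and of $(\xi,B)$ prescribed in Subsection~\ref{subsec:relEnergy}.
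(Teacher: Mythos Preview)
Your proposal is correct and follows essentially the same route as the paper's proof: both decompose the relative energy via~\eqref{eq:errorRepByRelEntropyStructure}, feed in the sharp energy dissipation~\eqref{eq:energyDissipEquality}, test the two momentum balances against each other, rewrite $\partial_t\xi$ by adding and subtracting the transport operator $(B\cdot\nabla)\xi + (\nabla B)^{\mathsf T}\xi$, invoke the Gibbs--Thomson-type identity~\eqref{eq:approxGibbsThomson} for the capillary stress, and finally complete the two curvature squares. One small point of imprecision: to eliminate the residual term $\int v_\eps\cdot\partial_t v$ that appears after testing~\eqref{eq:conditionsWeakSolPhaseField4} with $\eta=v$, you must also test the \emph{strong} momentum balance~\eqref{eq:conditionsStrongSolSharpInterfaceLimit7} with $v_\eps$ (equivalently, with $v_\eps-v$, as the paper does in~\eqref{eq:auxRelEntropyTimeEvol4}); your later reference to the capillary term $c_0\int\no_\II\otimes\no_\II:\nabla(v_\eps{-}v)\,d|\nabla\chi|$ shows you have this in mind, but it is not covered by ``inserting $v$ itself'' alone.
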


\begin{proof}
We split the proof into six steps.

\textit{Step 1: Time evolution of kinetic energy terms.}
We claim that
\begin{align}
\nonumber
&\int_{\Omega} \frac{1}{2} \big|(v_\eps {-} v)(\cdot,T)\big|^2 \,dx
- \int_{\Omega} \frac{1}{2} \big|v_{\eps,0} {-} v_0\big|^2 \,dx 
\\& \label{eq:auxRelEntropyTimeEvol1}
= \int_{\Omega} \frac{1}{2} |v_\eps(\cdot,T)|^2 \,dx
- \int_{\Omega} \frac{1}{2} |v_{\eps,0}|^2 \,dx
+ \int_0^T\int_{\Omega} |\nabla v_\eps|^2 \,dx dt
\\&~~~ \nonumber
- \int_0^T\int_{\Omega} \big|\nabla (v_\eps {-} v)\big|^2 \,dx dt
- \int_0^T\int_{\Omega} (v_\eps {-} v) \cdot \big((v_\eps {-} v)\cdot\nabla\big)v \,dx dt
\\&~~~ \nonumber
- \int_0^T\int_{\Omega} \eps\nabla\varphi_\eps \cdot (\nabla\varphi_\eps \cdot \nabla) v \,dx dt
- \int_0^T\int_{\Omega} c_0\chi ((v_\eps {-} v)\cdot\nabla)(\nabla\cdot\xi) \,dx dt
\end{align}
for a.e.\ $T \in (0,\Tstrong)$. For a proof of~\eqref{eq:auxRelEntropyTimeEvol1},
we first observe that by~\eqref{eq:conditionsStrongSolSharpInterfaceLimit7}
and the regularity of the sharp interface limit solution it holds
\begin{equation}
\label{eq:auxRelEntropyTimeEvol2}
\begin{aligned}
0 &= \int_0^T\int_{\Omega} \eta \cdot \big(\partial_t + (v\cdot\nabla)\big)v \,dx dt
+ \int_0^T\int_{\Omega} \nabla v : \nabla \eta \,dx dt
\\&~~~
- c_0 \int_0^T\int_{\II} H_{\II}\no_{\II} \cdot \eta \,d\mathcal{H}^{d-1} dt
\end{aligned}
\end{equation}
for a.e.\ $T \in (0,\Tstrong)$ and all $\eta \in C^\infty_c(\Omega{\times}[0,T];\Rd)$ 
with $\nabla\cdot\eta=0$. Instead of directly exploiting the information 
provided by~\eqref{eq:auxRelEntropyTimeEvol2}, it is convenient to post-process the curvature term before.
More precisely, for any admissible solenoidal test vector field in~\eqref{eq:auxRelEntropyTimeEvol2},
it follows from plugging in the second identity of~\eqref{eq:conditionsXiB5} as well as
integrating by parts that
\begin{align*}
- c_0 \int_0^T\int_{\II} H_{\II}\no_{\II} \cdot \eta \,d\mathcal{H}^{d-1} dt
&= c_0 \int_0^T\int_{\II} (\nabla\cdot\xi)\no_{\II} \cdot \eta \,d\mathcal{H}^{d-1} dt
\\&
= - \int_0^T\int_{\Omega} c_0\chi (\eta\cdot\nabla)(\nabla\cdot\xi) \,dx dt.
\end{align*}
Hence, \eqref{eq:auxRelEntropyTimeEvol2} updates to
\begin{equation}
\label{eq:auxRelEntropyTimeEvol3}
\begin{aligned}
0 &= \int_0^T\int_{\Omega} \eta \cdot \big(\partial_t + (v\cdot\nabla)\big)v \,dx dt
+ \int_0^T\int_{\Omega} \nabla v : \nabla \eta \,dx dt
\\&~~~
- \int_0^T\int_{\Omega} c_0\chi (\eta\cdot\nabla)(\nabla\cdot\xi) \,dx dt
\end{aligned}
\end{equation}
for a.e.\ $T \in (0,\Tstrong)$ and all $\eta \in C^\infty_c(\Omega{\times}[0,T];\Rd)$ 
with $\nabla\cdot\eta=0$. By means of standard mollification arguments, one
may plug in the choice $\eta = v_\eps - v$ into~\eqref{eq:auxRelEntropyTimeEvol3}
resulting in the identity
\begin{equation}
\label{eq:auxRelEntropyTimeEvol4}
\begin{aligned}
0 &= \int_0^T\int_{\Omega} (v_\eps {-} v) \cdot \big(\partial_t + (v\cdot\nabla)\big)v \,dx dt
+ \int_0^T\int_{\Omega} \nabla v : \nabla (v_\eps {-} v) \,dx dt
\\&~~~
- \int_0^T\int_{\Omega} c_0\chi ((v_\eps {-} v)\cdot\nabla)(\nabla\cdot\xi) \,dx dt
\end{aligned}
\end{equation}
for a.e.\ $T \in (0,\Tstrong)$. Next to the previous display, we also trivially have
by the regularity of the sharp interface limit solution as well as the solenoidality
of the velocity fields~$v_\eps$ and~$v$, respectively, that
\begin{equation}
\label{eq:auxRelEntropyTimeEvol5}
\begin{aligned}
&\int_{\Omega} \frac{1}{2}|v(\cdot,T)|^2 \,dx - \int_{\Omega} \frac{1}{2}|v_0|^2 \,dx
\\&
= \int_0^T\int_{\Omega} v \cdot \partial_t v \,dx dt
= \int_0^T\int_{\Omega} v \cdot \partial_t v \,dx dt
+ \int_0^T\int_{\Omega} v \cdot (v_\eps \cdot \nabla) v \,dx dt
\end{aligned}
\end{equation}
for a.e.\ $T \in (0,\Tstrong)$. Finally, up to a standard mollification 
argument one may insert $\eta=v$ as a test vector field in~\eqref{eq:conditionsWeakSolPhaseField4}
entailing
\begin{align}
\nonumber
&\int_{\Omega} v_\eps(\cdot,T) \cdot v(\cdot,T) \,dx 
- \int_{\Omega} v_{\eps,0} \cdot v_0 \,dx
\\& \label{eq:auxRelEntropyTimeEvol6}
= \int_0^T\int_{\Omega} v_\eps \cdot \partial_t v \,dx dt
+ \int_0^T\int_{\Omega} v_\eps \cdot (v_\eps \cdot \nabla) v \,dx dt
\\&~~~ 
\nonumber
- \int_0^T\int_{\Omega} \nabla v_\eps : \nabla v \,dx dt
+ \int_0^T\int_{\Omega} \eps\nabla\varphi_\eps\otimes\nabla\varphi_\eps : \nabla v \,dx dt
\end{align}
for a.e.\ $T \in (0,\Tstrong)$. Hence, adding first~\eqref{eq:auxRelEntropyTimeEvol4}
to~\eqref{eq:auxRelEntropyTimeEvol5} and then subtracting~\eqref{eq:auxRelEntropyTimeEvol6}
from the resulting identity produces the claim~\eqref{eq:auxRelEntropyTimeEvol1}.

\textit{Step 2: Time evolution of interfacial energy terms.}
We next claim that
\begin{align}
\nonumber
&\int_{\Omega} \frac{\eps}{2}\big|\nabla\varphi_\eps(\cdot,T)\big|^2 + 
\frac{1}{\eps}W\big(\varphi_\eps(\cdot,T)\big) 
- (\xi\cdot\nabla\psi_\eps)(\cdot,T) \,dx
\\&~~~ \nonumber
- \int_{\Omega} \frac{\eps}{2}\big|\nabla\varphi_{\eps,0}\big|^2 + 
\frac{1}{\eps}W\big(\varphi_{\eps,0}\big) 
- (\xi(\cdot,0)\cdot\nabla\psi_{\eps,0}) \,dx
\\& \label{eq:auxRelEntropyTimeEvol7}
= \int_{\Omega} \frac{\eps}{2}\big|\nabla\varphi_\eps(\cdot,T)\big|^2 {+} 
\frac{1}{\eps}W\big(\varphi_\eps(\cdot,T)\big) \,dx
- \int_{\Omega} \frac{\eps}{2}\big|\nabla\varphi_{\eps,0}\big|^2 {+} 
\frac{1}{\eps}W\big(\varphi_{\eps,0}\big) \,dx
\\&~~~ \nonumber
+ \int_0^T\int_{\Omega} \frac{1}{\eps}|H_\eps|^2 \,dxdt
\\&~~~ \nonumber
- \int_0^T\int_{\Omega} \frac{1}{\eps}|H_\eps|^2 \,dxdt
- \int_0^T\int_{\Omega} (\nabla\cdot\xi)\frac{H_\eps}{\sqrt{\eps}}
	\frac{\sqrt{2W(\varphi_\eps)}}{\sqrt{\eps}}  \,dx dt
\\&~~~ \nonumber
- \int_0^T\int_{\Omega} (\nabla\cdot\xi)(v_\eps\cdot\nabla)\psi_\eps \,dx dt
- \int_0^T\int_{\Omega} \partial_t\xi \cdot \nabla\psi_\eps \,dx dt
\end{align}
for a.e.\ $T \in (0,\Tstrong)$. For a proof of~\eqref{eq:auxRelEntropyTimeEvol7},
we start with an integration by parts which combined with the fundamental theorem
of calculus, the regularity of the phase-field~$\varphi_\eps$,
a standard mollifier approximation (w.r.t.\ the spatial variable)~$\xi_k:=
\theta_k\ast\xi$ for the vector field~$\xi$ (recall for this the regularity 
requirements~\eqref{eq:conditionsXiB1} and~\eqref{eq:conditionsXiB3}), 
the condition~\eqref{eq:conditionsWeakSolPhaseField7}, and finally
another integration by parts ensures
\begin{equation}
\label{eq:auxRelEntropyTimeEvol8}
\begin{aligned}
&\int_{\Omega} (\xi\cdot\nabla\psi_\eps)(\cdot,T) \,dx
- \int_{\Omega} \xi(\cdot,0)\cdot\nabla\psi_{\eps,0} \,dx
\\&
= - \bigg(\int_{\Omega} (\nabla\cdot\xi)(\cdot,T)\psi_\eps(\cdot,T) \,dx
- \int_{\Omega} (\nabla\cdot\xi)(\cdot,0)\psi_{\eps,0} \,dx \bigg)
\\&
= - \lim_{k\to\infty}\int_0^T\int_{\Omega} (\nabla\cdot\partial_t\xi_k)\psi_\eps \,dx dt
- \int_0^T\int_{\Omega} (\nabla\cdot\xi) \partial_t\psi_\eps \,dx dt
\\&
= \int_0^T\int_{\Omega} \partial_t\xi\cdot\nabla\psi_\eps \,dx dt
- \int_0^T\int_{\Omega} (\nabla\cdot\xi) \partial_t\psi_\eps \,dx dt.
\end{aligned}
\end{equation}
By the chain rule, the condition~\eqref{eq:conditionsWeakSolPhaseField6},
and the definition~\eqref{eq:phaseFieldCurvature}, it also follows
\begin{align}
\label{eq:evolPsi}
\partial_t\psi_\eps = 
-\frac{H_\eps}{\sqrt{\eps}}\frac{\sqrt{2W(\varphi_\eps)}}{\sqrt{\eps}} 
- (v_\eps\cdot\nabla)\psi_\eps.
\end{align}
Hence, one obtains the claim~\eqref{eq:auxRelEntropyTimeEvol7}
from plugging in~\eqref{eq:evolPsi} into~\eqref{eq:auxRelEntropyTimeEvol8}
and adding (in a self-evident way) several zeros.

\textit{Step 3: Exploiting sharp energy dissipation.}
The combination of the two identities~\eqref{eq:auxRelEntropyTimeEvol1}
and~\eqref{eq:auxRelEntropyTimeEvol7} from the previous two steps
together with the sharp energy dissipation estimate~\eqref{eq:energyDissipEquality},
the condition~\eqref{eq:conditionsWeakSolPhaseField6},
and finally the definition~\eqref{eq:phaseFieldCurvature} entail
\begin{align}
\nonumber
&E[\varphi_\eps,v_\eps|\chi,v](T)
\\ \nonumber
&\leq E[\varphi_\eps,v_\eps|\chi,v](0)
- \int_{0}^{T}\int_{\Omega} |\nabla v_\eps - \nabla v|^2 \,dx dt
\\&~~~ \nonumber
- \int_0^T\int_{\Omega} (v_\eps {-} v) \cdot \big((v_\eps {-} v)\cdot\nabla\big)v \,dx dt
\\&~~~ \nonumber
- \int_0^T\int_{\Omega} \frac{1}{\eps}|H_\eps|^2 \,dxdt
- \int_0^T\int_{\Omega} (\nabla\cdot\xi)\frac{H_\eps}{\sqrt{\eps}}
	\frac{\sqrt{2W(\varphi_\eps)}}{\sqrt{\eps}}  \,dx dt
\\&~~~ \nonumber
- \int_0^T\int_{\Omega} c_0\chi ((v_\eps {-} v)\cdot\nabla)(\nabla\cdot\xi) \,dx dt
- \int_0^T\int_{\Omega} (\nabla\cdot\xi)(v_\eps\cdot\nabla)\psi_\eps \,dx dt
\\&~~~ \nonumber
- \int_0^T\int_{\Omega} \eps\nabla\varphi_\eps \cdot (\nabla\varphi_\eps \cdot \nabla) v \,dx dt
- \int_0^T\int_{\Omega} \partial_t\xi\cdot\nabla\psi_\eps \,dx dt
\\& \label{eq:auxRelEntropyTimeEvol9}
=:  E[\varphi_\eps,v_\eps|\chi,v](0)
- \int_{0}^{T}\int_{\Omega} |\nabla v_\eps - \nabla v|^2 \,dx dt
\\&~~~~ \nonumber
- \int_0^T\int_{\Omega} (v_\eps {-} v) \cdot \big((v_\eps {-} v)\cdot\nabla\big)v \,dx dt
+ \mathrm{Res}^{(1)}(T).
\end{align}
The remainder of the proof takes care of suitably rewriting the residual term~$\mathrm{Res}^{(1)}$.
To this end, we start with an auxiliary result.

\textit{Step 4: A well-known identity for the phase-field curvature operator.}
We claim that it holds
\begin{align}
\nonumber
&- \int_0^T\int_{\Omega} \eps\nabla\varphi_\eps \cdot (\nabla\varphi_\eps \cdot \nabla) \eta \,dx dt
\\& \label{eq:approxGibbsThomson}
= - \int_0^T\int_{\Omega} H_\eps(\eta\cdot\no_\eps)|\nabla\varphi_\eps| \,dx dt
- \int_0^T\int_{\Omega} (\nabla\cdot \eta) |\nabla\psi_\eps| \,dx dt 
\\&~~~ \nonumber
- \int_0^T\int_{\Omega} (\nabla\cdot \eta) \frac{1}{2}\Big(\sqrt{\eps}|\nabla\varphi_\eps|
-\frac{1}{\sqrt{\eps}}\sqrt{2W(\varphi_\eps)}\Big)^2 \,dx dt
\end{align}
for a.e.\ $T \in (0,\Tstrong)$ and all $\eta \in C^\infty_c(\Omega{\times}[0,T];\Rd)$. 
In particular, we record that
\begin{align}
\label{eq:approxGibbsThomsonDivFree}
- \int_0^T\int_{\Omega} \eps\nabla\varphi_\eps \cdot (\nabla\varphi_\eps \cdot \nabla) \eta \,dx dt
= -\int_0^T\int_{\Omega} H_\eps(\eta\cdot\no_\eps)|\nabla\varphi_\eps| \,dx dt
\end{align}
for a.e.\ $T \in (0,\Tstrong)$ and all $\eta \in C^\infty_c(\Omega{\times}[0,T];\Rd)$
with $\nabla\cdot\eta=0$.

Indeed, as we may compute by an integration by parts and~\eqref{eq:identitiesNormals}
\begin{align*}
&- \int_0^T\int_{\Omega} \eps\nabla\varphi_\eps \cdot (\nabla\varphi_\eps \cdot \nabla) \eta \,dx dt
\\&
= \int_0^T\int_{\Omega} \eps\Delta\varphi_\eps (\eta\cdot\no_\eps)|\nabla\varphi_\eps| \,dx dt
+ \int_0^T\int_{\Omega} \eps \nabla\varphi_\eps\otimes \eta:\nabla^2\varphi_\eps \,dx dt
\\&
= \int_0^T\int_{\Omega} \eps\Delta\varphi_\eps (\eta\cdot\no_\eps)|\nabla\varphi_\eps| \,dx dt
-\int_0^T\int_{\Omega} \frac 12 (\nabla\cdot \eta)\eps|\nabla\varphi_\eps|^2 \,dx dt
\end{align*}
it follows in combination with completing a square that 
\begin{align*}
&- \int_0^T\int_{\Omega} \eps\nabla\varphi_\eps \cdot (\nabla\varphi_\eps \cdot \nabla) \eta \,dx dt
\\&
= \int_0^T\int_{\Omega} \eps\Delta\varphi_\eps (\eta\cdot\no_\eps)|\nabla\varphi_\eps| \,dx dt
+ \int_0^T\int_{\Omega} (\nabla\cdot \eta)\frac{1}{\eps}W(\varphi_\eps) \,dx dt
\\&~~~
- \int_0^T\int_{\Omega} (\nabla\cdot \eta) |\nabla\psi_\eps| \,dx dt 
- \int_0^T\int_{\Omega} (\nabla\cdot \eta) \frac{1}{2}\Big(\sqrt{\eps}|\nabla\varphi_\eps|
-\frac{1}{\sqrt{\eps}}\sqrt{2W(\varphi_\eps)}\Big)^2 \,dx dt
\end{align*}
for a.e.\ $T \in (0,\Tstrong)$ and all $\eta \in C^\infty_c(\Omega{\times}[0,T];\Rd)$.
Further integrating by parts in the second term on the
right hand side of the last display, we obtain the claim~\eqref{eq:approxGibbsThomson} 
from recalling the definition~\eqref{eq:phaseFieldCurvature} of~$H_\eps$.

\textit{Step 5: Exploiting control on the time evolution of~$\xi$.}
We next claim that
\begin{align}
\label{eq:auxRelEntropyTimeEvol10}
\mathrm{Res}^{(1)}(T) &=
- \int_0^T\int_{\Omega} \big(\partial_t\xi + (B\cdot\nabla)\xi 
+ (\nabla B)^\mathsf{T}\xi\big)\cdot(\no_\eps-\xi)|\nabla\psi_\eps| \,dx dt
\\&~~~ \nonumber
- \int_0^T\int_{\Omega} \xi\cdot\big(\partial_t {+} (B\cdot\nabla)\big)\xi |\nabla\psi_\eps| \,dx dt
\\&~~~ \nonumber
- \int_0^T\int_{\Omega} \nabla B:(\xi-\no_\eps)\otimes(\xi-\no_\eps) |\nabla\psi_\eps| \,dx dt
\\&~~~ \nonumber
+ \int_0^T\int_{\Omega} (\nabla\cdot B)(1-\xi\cdot\no_\eps)|\nabla\psi_\eps| \,dx dt
\\&~~~ \nonumber
+ \int_0^T\int_{\Omega} (\nabla\cdot B) \frac{1}{2}\Big(\sqrt{\eps}|\nabla\varphi_\eps|
-\frac{1}{\sqrt{\eps}}\sqrt{2W(\varphi_\eps)}\Big)^2 \,dx dt
\\&~~~ \nonumber
- \int_0^T\int_{\Omega} \big(\no_\eps\otimes\no_\eps - \xi\otimes\xi\big) : \nabla B\, 
\big(\eps|\nabla\varphi_\eps|^2 - |\nabla\psi_\eps|\big) \,dx dt
\\&~~~ \nonumber
- \int_0^T\int_{\Omega} \xi\otimes\xi : \nabla B\, 
\big(\eps|\nabla\varphi_\eps|^2 - |\nabla\psi_\eps|\big) \,dx dt
\\&~~~ \nonumber
+ \mathrm{Res}^{(2)}(T)
\end{align}
for a.e.\ $T \in (0,\Tstrong)$, where
\begin{align}
\label{eq:auxRelEntropyTimeEvol11}
\mathrm{Res}^{(2)}(T) &:=
- \int_0^T\int_{\Omega} \frac{1}{\eps}|H_\eps|^2 \,dxdt
- \int_0^T\int_{\Omega} (\nabla\cdot\xi)\frac{H_\eps}{\sqrt{\eps}}
	\frac{\sqrt{2W(\varphi_\eps)}}{\sqrt{\eps}}  \,dx dt
\\&~~~ \nonumber
- \int_0^T\int_{\Omega} c_0\chi ((v_\eps {-} v)\cdot\nabla)(\nabla\cdot\xi) \,dx dt
\\&~~~ \nonumber
- \int_0^T\int_{\Omega} (\nabla\cdot\xi)((v_\eps {-} B)\cdot\no_\eps)|\nabla\psi_\eps| \,dx dt
\\&~~~ \nonumber
- \int_0^T\int_{\Omega} H_\eps\big((v {-} B)\cdot\no_\eps\big)|\nabla\varphi_\eps| \,dx dt.
\end{align}
For a proof of~\eqref{eq:auxRelEntropyTimeEvol10} and~\eqref{eq:auxRelEntropyTimeEvol11}, 
we first record that it follows from plugging in 
$\eta=v$ as a test function in~\eqref{eq:approxGibbsThomsonDivFree},
which is indeed an admissible choice after a standard mollification argument, that
\begin{align}
\label{eq:auxRelEntropyTimeEvol12}
- \int_0^T\int_{\Omega} \eps\nabla\varphi_\eps \cdot (\nabla\varphi_\eps \cdot \nabla) v \,dx dt
= - \int_0^T\int_{\Omega} H_\eps(v\cdot\no_\eps)|\nabla\varphi_\eps| \,dx dt
\end{align}
for a.e.\ $T \in (0,\Tstrong)$. The next step in the computation takes care of the term involving
the time derivative of the vector field $\xi$. Adding and subtracting 
the anticipated PDE for the time evolution
of the vector field $\xi$, cf.\ the left hand side of~\eqref{eq:conditionsXiB6},
and analogously for the anticipated equation for the
time evolution of its length, cf.\ the left hand side of~\eqref{eq:conditionsXiB7},
we get making also use of~\eqref{eq:identitiesNormals}
\begin{align}
\nonumber
&- \int_0^T\int_{\Omega} \nabla\psi_\eps \cdot\partial_t\xi \,dx dt 
\\& \nonumber
= - \int_0^T\int_{\Omega} \big(\partial_t\xi {+} (B\cdot\nabla)\xi 
{+} (\nabla B)^\mathsf{T}\xi\big)\cdot\no_\eps|\nabla\psi_\eps| \,dx dt
\\&~~~ \nonumber
+ \int_0^T\int_{\Omega} \big((B\cdot\nabla)\xi + (\nabla B)^\mathsf{T}\xi\big)
\cdot\no_\eps|\nabla\psi_\eps| \,dx dt
\\& \label{eq:auxRelEntropyTimeEvol13}
= - \int_0^T\int_{\Omega} \big(\partial_t\xi {+} (B\cdot\nabla)\xi {+} (\nabla B)^\mathsf{T}\xi\big)
\cdot(\no_\eps {-} \xi)|\nabla\psi_\eps| \,dx dt 
\\&~~~ \nonumber
- \int_0^T\int_{\Omega} \xi\cdot\big(\partial_t {+} (B\cdot\nabla)\big)\xi |\nabla\psi_\eps| \,dx dt 
\\&~~~ \nonumber
- \int_0^T\int_{\Omega} \nabla B:\xi\otimes(\xi-\no_\eps) |\nabla\psi_\eps| \,dx dt
\\&~~~ \nonumber
+ \int_0^T\int_{\Omega} \big(\no_\eps \cdot (B\cdot\nabla)\xi\big) |\nabla\psi_\eps| \,dx dt
\end{align}
for a.e.\ $T \in (0,\Tstrong)$. We further rewrite the last term 
in the preceding identity by repeatedly adding zero and also using that
$(B\cdot\nabla)\xi = \nabla\cdot(\xi\otimes B)-(\nabla\cdot B)\xi$ as follows
\begin{align*}
&\int_0^T\int_{\Omega} \big(\no_\eps \cdot (B\cdot\nabla)\xi\big) |\nabla\psi_\eps| \,dx dt
\\&
= \int_0^T\int_{\Omega} (\nabla\cdot B)(1-\xi\cdot\no_\eps)|\nabla\psi_\eps| \,dx dt
- \int_0^T\int_{\Omega} (\mathrm{Id}-\no_\eps\otimes\no_\eps):\nabla B\, |\nabla\psi_\eps| \,dx dt
\\&~~~
+ \int_0^T\int_{\Omega} \big(\nabla\cdot(\xi\otimes B)\big)\cdot\no_\eps |\nabla\psi_\eps| \,dx dt 
- \int_0^T\int_{\Omega} \no_\eps\otimes\no_\eps:\nabla B\,|\nabla\psi_\eps| \,dx dt. 
\end{align*}
In order to combine the last two terms, we compute based on 
a standard mollifier approximation (w.r.t.\ the spatial variable)~$\xi_k:=
\theta_k\ast\xi$ and $B_k := \theta_k\ast B$ 
for the vector fields~$\xi$ and~$B$, respectively (recall the 
regularity assumptions~\eqref{eq:conditionsXiB1}--\eqref{eq:conditionsXiB3}), 
another integration by parts as well as~\eqref{eq:identitiesNormals} that
\begin{align*}
&\int_0^T\int_{\Omega} \big(\nabla\cdot(\xi\otimes B)\big)\cdot\no_\eps |\nabla\psi_\eps| \,dx dt
\\&
= - \lim_{k\to\infty} \int_0^T\int_{\Omega} 
\psi_\eps\nabla\cdot\big(\nabla\cdot(\xi_k \otimes B_k)\big) \,dx dt
\\&
=- \lim_{k\to\infty} \int_0^T\int_{\Omega} 
\psi_\eps\nabla\cdot\big(\nabla\cdot(B_k \otimes \xi_k)\big) \,dx dt
\\&
= \int_0^T\int_{\Omega} (\nabla\cdot\xi)B\cdot\no_\eps |\nabla\psi_\eps| \,dx dt
+ \int_0^T\int_{\Omega} \no_\eps\otimes\xi:\nabla B \,|\nabla\psi_\eps| \,dx dt.
\end{align*}
All in all, feeding back into~\eqref{eq:auxRelEntropyTimeEvol13} the previous two displays 
we consequently obtain
\begin{align}
\nonumber
&- \int_0^T\int_{\Omega} \nabla\psi_\eps \cdot\partial_t\xi \,dx dt 
\\& \label{eq:auxRelEntropyTimeEvol14}
= - \int_0^T\int_{\Omega} \big(\partial_t\xi {+} (B\cdot\nabla)\xi {+} (\nabla B)^\mathsf{T}\xi\big)
\cdot(\no_\eps {-} \xi)|\nabla\psi_\eps| \,dx dt 
\\&~~~ \nonumber
- \int_0^T\int_{\Omega} \xi\cdot\big(\partial_t {+} (B\cdot\nabla)\big)\xi |\nabla\psi_\eps| \,dx dt 
\\&~~~ \nonumber
- \int_0^T\int_{\Omega} \nabla B:(\xi-\no_\eps)\otimes(\xi-\no_\eps) |\nabla\psi_\eps| \,dx dt
\\&~~~ \nonumber
+ \int_0^T\int_{\Omega} (\nabla\cdot B)(1-\xi\cdot\no_\eps)|\nabla\psi_\eps| \,dx dt
\\&~~~ \nonumber
+ \int_0^T\int_{\Omega} (\nabla\cdot\xi)B\cdot\no_\eps |\nabla\psi_\eps| \,dx dt
\\&~~~ \nonumber
- \int_0^T\int_{\Omega} (\mathrm{Id}-\no_\eps\otimes\no_\eps):\nabla B\, |\nabla\psi_\eps| \,dx dt
\end{align}
for a.e.\ $T \in (0,\Tstrong)$. Finally
appealing to~\eqref{eq:approxGibbsThomson} with the admissible
choice of the test vector field~$\eta=B$ (again after a 
standard mollification argument) in form of
\begin{align}
\nonumber
&- \int_0^T\int_{\Omega} (\mathrm{Id}-\no_\eps\otimes\no_\eps):\nabla B \, |\nabla\psi_\eps| \,dx dt
\\ \label{eq:auxRelEntropyTimeEvol15}
&= \int_0^T\int_{\Omega} H_\eps (B\cdot\no_\eps)|\nabla\varphi_\eps|
\\&~~~ \nonumber
+ \int_0^T\int_{\Omega} (\nabla\cdot B) \frac{1}{2}\Big(\sqrt{\eps}|\nabla\varphi_\eps| \,dx dt
- \frac{1}{\sqrt{\eps}}\sqrt{2W(\varphi_\eps)}\Big)^2 
\\&~~~ \nonumber
- \int_0^T\int_{\Omega} \big(\no_\eps\otimes\no_\eps - \xi\otimes\xi\big) : \nabla B\, 
\big(\eps|\nabla\varphi_\eps|^2 - |\nabla\psi_\eps|\big) \,dx dt
\\&~~~ \nonumber
- \int_0^T\int_{\Omega} \xi\otimes\xi : \nabla B\, 
\big(\eps|\nabla\varphi_\eps|^2 - |\nabla\psi_\eps|\big) \,dx dt 
\end{align}
for a.e.\ $T \in (0,\Tstrong)$, the claims~\eqref{eq:auxRelEntropyTimeEvol10} 
and~\eqref{eq:auxRelEntropyTimeEvol11} therefore follow from the
combination of the identities~\eqref{eq:auxRelEntropyTimeEvol12},
\eqref{eq:auxRelEntropyTimeEvol14} and~\eqref{eq:auxRelEntropyTimeEvol15}.

\textit{Step 6: Generating dissipation squares.} For the final
step, we claim that
\begin{align}
\label{eq:auxRelEntropyTimeEvol16}
\mathrm{Res}^{(2)} &\leq - \int_{0}^{T}\int_{\Omega} \frac{1}{2\eps} 
	\Big| H_\eps + \sqrt{2W(\varphi_\eps)}(\nabla\cdot\xi) \Big|^2 \,dx dt
\\&~~~ \nonumber
- \int_{0}^{T}\int_{\Omega} \frac{1}{2\eps} 
  \Big| H_\eps - \big((B - v)\cdot\xi\big)\eps|\nabla\varphi_\eps| \Big|^2 \,dx dt
\\&~~~ \nonumber
- \int_{0}^{T}\int_{\Omega} (c_0\chi - \psi_\eps) \big((v_\eps-v)\cdot\nabla\big)(\nabla\cdot\xi) \,dx dt
\\&~~~ \nonumber
+ \int_{0}^{T}\int_{\Omega} \big|(B-v)\cdot\xi + \nabla\cdot\xi\big|^2\eps|\nabla\varphi_\eps|^2 \,dx dt
\\&~~~ \nonumber
+ \int_{0}^{T}\int_{\Omega} |\nabla\cdot\xi|^2
\bigg(\frac{\sqrt{2W(\varphi_\eps)}}{\sqrt{\eps}}-\sqrt{\eps}|\nabla\varphi_\eps|\bigg)^2 \,dx dt
\\&~~~ \nonumber
- \int_{0}^{T}\int_{\Omega} \frac{1}{\sqrt{\eps}}\big(H_\eps + \sqrt{2W(\varphi_\eps)}(\nabla\cdot\xi)\big)
\big((v-B)\cdot(\no_\eps-\xi)\big)\sqrt{\eps}|\nabla\varphi_\eps| \,dx dt
\end{align}
for a.e.\ $T \in (0,\Tstrong)$. Indeed, once we 
established~\eqref{eq:auxRelEntropyTimeEvol16} the 
asserted relative energy inequality from Proposition~\ref{prop:relEntropyInequality}
is a consequence of collecting the estimates and identities
from~\eqref{eq:auxRelEntropyTimeEvol9}, \eqref{eq:auxRelEntropyTimeEvol10} 
and~\eqref{eq:auxRelEntropyTimeEvol11}, respectively.

For a proof of~\eqref{eq:auxRelEntropyTimeEvol16},
we start by spending half of the available interfacial 
energy dissipation in form of 
\begin{align}
\nonumber
&- \int_0^T\int_{\Omega} \frac{1}{\eps}|H_\eps|^2 \,dxdt
- \int_0^T\int_{\Omega} (\nabla\cdot\xi)\frac{H_\eps}{\sqrt{\eps}}
	\frac{\sqrt{2W(\varphi_\eps)}}{\sqrt{\eps}}  \,dx dt
\\& \label{eq:auxRelEntropyTimeEvol17}
= - \int_{0}^{T}\int_{\Omega} \frac{1}{2\eps} 
	\Big| H_\eps + \sqrt{2W(\varphi_\eps)}(\nabla\cdot\xi) \Big|^2 \,dx dt
\\&~~~ \nonumber
- \int_0^T\int_{\Omega} \frac{1}{2\eps}|H_\eps|^2 \,dxdt
+ \int_0^T\int_{\Omega} \frac{1}{2}|\nabla\cdot\xi|^2 \Big(
\frac{\sqrt{2W(\varphi_\eps)}}{\sqrt{\eps}}\Big)^2 \,dxdt.
\end{align}
We proceed by combining the remaining terms. Adding zero, recalling~\eqref{eq:identitiesNormals},
and integrating by parts (using in the process the solenoidality of
the velocity fields) results in
\begin{align}
\nonumber
&- \int_{0}^{T}\int_{\Omega} c_0\chi \big((v_\eps-v)\cdot\nabla\big)(\nabla\cdot\xi) \,dxdt
- \int_{0}^{T}\int_{\Omega} (\nabla\cdot\xi)\big((v_\eps-B)\cdot\no_\eps\big)|\nabla\psi_\eps| \,dxdt
\\& \label{eq:auxRelEntropyTimeEvol18}
= - \int_{0}^{T}\int_{\Omega} (c_0\chi - \psi_\eps) \big((v_\eps-v)\cdot\nabla\big)(\nabla\cdot\xi) \,dxdt
\\&~~~ \nonumber
- \int_{0}^{T}\int_{\Omega} (\nabla\cdot\xi)\big((v-B)\cdot(\no_\eps-\xi)\big)|\nabla\psi_\eps| \,dxdt
\\&~~~ \nonumber
- \int_{0}^{T}\int_{\Omega} (\nabla\cdot\xi)\big((v-B)\cdot\xi\big)|\nabla\psi_\eps| \,dxdt.
\end{align}
Moreover, adding zero as well as spending the remaining half of the available interfacial 
energy dissipation leads to
\begin{align}
\nonumber
&- \int_{0}^{T}\int_{\Omega} \frac{1}{2\eps} |H_\eps|^2 \,dxdt
- \int_{0}^{T}\int_{\Omega} H_\eps \big((v-B)\cdot\no_\eps\big)|\nabla\varphi_\eps| \,dxdt
\\& \label{eq:auxRelEntropyTimeEvol19}
= - \int_{0}^{T}\int_{\Omega} \frac{1}{2\eps} 
\Big| H_\eps - \big((B - v)\cdot\xi\big)\eps|\nabla\varphi_\eps| \Big|^2 \,dxdt
\\&~~~ \nonumber
+ \int_{0}^{T}\int_{\Omega} \frac{1}{2}|(B-v)\cdot\xi|^2\eps|\nabla\varphi_\eps|^2 \,dxdt
\\&~~~ \nonumber
- \int_{0}^{T}\int_{\Omega} H_\eps \big((v-B)\cdot(\no_\eps-\xi)\big)|\nabla\varphi_\eps| \,dxdt.
\end{align}
Two simple steps remain until we reach the desired final form~\eqref{eq:auxRelEntropyTimeEvol16}. 
The first makes use of $|\nabla\psi_\eps|=\sqrt{2W(\varphi_\eps)}|\nabla\varphi_\eps|$ 
which in turn allows to combine
\begin{align}
\nonumber
&- \int_{0}^{T}\int_{\Omega} H_\eps \big((v-B)\cdot(\no_\eps-\xi)\big)|\nabla\varphi_\eps| \,dxdt
\\&~~~ \nonumber
- \int_{0}^{T}\int_{\Omega} (\nabla\cdot\xi)\big((v-B)\cdot(\no_\eps-\xi)\big)|\nabla\psi_\eps| \,dxdt
\\& \label{eq:auxRelEntropyTimeEvol20}
= - \int_{0}^{T}\int_{\Omega} \frac{1}{\sqrt{\eps}}
\big(H_\eps + \sqrt{2W(\varphi_\eps)}(\nabla\cdot\xi)\big)
\big((v-B)\cdot(\no_\eps-\xi)\big)\sqrt{\eps}|\nabla\varphi_\eps| \,dxdt.
\end{align}
The second consists of collecting the square, adding zero
and exploiting the simple estimate $(a+b)^2\leq 2(a^2+b^2)$ in form of
\begin{align}
\nonumber
&\int_{0}^{T}\int_{\Omega} \frac{1}{2}|(B-v)\cdot\xi|^2\eps|\nabla\varphi_\eps|^2 \,dxdt
+ \int_{0}^{T}\int_{\Omega} \frac{1}{2\eps} \big| \sqrt{2W(\varphi_\eps)}(\nabla\cdot\xi) \big|^2 \,dxdt
\\&~~~ \nonumber
+ \int_{0}^{T}\int_{\Omega} (\nabla\cdot\xi)\big((B-v)\cdot\xi\big)|\nabla\psi_\eps| \,dxdt
\\& \nonumber
= \int_{0}^{T}\int_{\Omega} \frac{1}{2\eps}\Big|\big((B-v)\cdot\xi\big)\eps|\nabla\varphi_\eps|
+ \sqrt{2W(\varphi_\eps)}(\nabla\cdot\xi)\Big|^2 \,dxdt
\\& \nonumber
= \int_{0}^{T}\int_{\Omega} \frac{1}{2\eps}
\Big|\big((B {-} v)\cdot\xi {+} \nabla\cdot\xi\big)\eps|\nabla\varphi_\eps| \,dxdt
+ \big(\sqrt{2W(\varphi_\eps)} {-} \eps|\nabla\varphi_\eps|\big)(\nabla\cdot\xi)\Big|^2 \,dxdt
\\& \label{eq:auxRelEntropyTimeEvol21}
\leq \int_{0}^{T}\int_{\Omega} \big|(B-v)\cdot\xi + \nabla\cdot\xi\big|^2\eps|\nabla\varphi_\eps|^2 \,dx dt
\\&~~~ \nonumber
+ \int_{0}^{T}\int_{\Omega} |\nabla\cdot\xi|^2
\bigg(\frac{\sqrt{2W(\varphi_\eps)}}{\sqrt{\eps}}-\sqrt{\eps}|\nabla\varphi_\eps|\bigg)^2 \,dx dt.
\end{align}
Hence, the combination of~\eqref{eq:auxRelEntropyTimeEvol17}--\eqref{eq:auxRelEntropyTimeEvol21}
implies the claim~\eqref{eq:auxRelEntropyTimeEvol16} and thus concludes the 
derivation of the relative energy inequality.
\end{proof}

\section{Time evolution of the error in the phase indicators}\label{sec:timeEvolWeight}

\begin{lemma}
\label{lem:evolBulkError}
Let the assumptions and notation of Theorem~\ref{theo:mainResult}
be in place, let~$(\xi,B)$ be a pair of vector fields
subject to~\eqref{eq:conditionsXiB1} and~\eqref{eq:conditionsXiB2}, respectively,
let~$\vartheta$ be a time-dependent weight satisfying~\eqref{eq:conditionsWeight1}
as well as~\eqref{eq:conditionsWeight4}--\eqref{eq:conditionsWeight5},
and recall finally the notation~\eqref{eq_defWeakNormal} and~\eqref{eq:phaseFieldCurvature}.
The time evolution of the error in the phase indicators~\eqref{eq:defWeightVol} 
may then be represented as follows: it holds
\begin{align*}
&E_{\mathrm{vol}}[\varphi_\eps|\chi](T)
\\&
= E_{\mathrm{vol}}[\varphi_\eps|\chi](0) 
+ \int_{0}^{T}\int_{\Omega} (\psi_\eps - c_0\chi)
\big(\partial_t \vartheta + (B\cdot\nabla)\vartheta\big) \,dx dt
\\&~~~
+ \int_{0}^{T}\int_{\Omega} (\psi_\eps - c_0\chi) \vartheta \nabla\cdot B \,dx dt
\\&~~~
+ \int_{0}^{T}\int_{\Omega} \vartheta\big((B-v)\cdot(\no_\eps-\xi)\big)|\nabla\psi_\eps| \,dx dt
\\&~~~
- \int_{0}^{T}\int_{\Omega} (\psi_\eps - c_0\chi)\big((v-v_\eps)\cdot\nabla)\vartheta \,dx dt
\\&~~~
+ \int_0^T\int_{\Omega} \vartheta\big((B-v)\cdot \xi\big)
\big(|\nabla\psi_\eps| - \eps|\nabla\varphi_\eps|^2\big) \,dx dt
\\&~~~
+ \int_0^T\int_{\Omega} \Big(\big((B-v)\cdot\xi\big)\sqrt{\eps}|\nabla\varphi_\eps| 
- \frac{H_\eps}{\sqrt{\eps}}\Big)\vartheta\sqrt{\eps}|\nabla\varphi_\eps| \,dx dt
\\&~~~
+ \int_0^T\int_{\Omega} \vartheta \Big(\frac{H_\eps}{\sqrt{\eps}}
+ \frac{\sqrt{2W(\varphi_\eps)}}{\sqrt{\eps}}(\nabla\cdot\xi)\Big)
\Big(\sqrt{\eps}|\nabla\varphi_\eps| - \frac{\sqrt{2W(\varphi_\eps)}}{\sqrt{\eps}}\Big) \,dx dt
\\&~~~
+ \int_0^T\int_{\Omega} \vartheta(\nabla\cdot\xi)\Big|
\sqrt{\eps}|\nabla\varphi_\eps| - \frac{\sqrt{2W(\varphi_\eps)}}{\sqrt{\eps}}\Big|^2 \,dx dt
\\&~~~
- \int_0^T\int_{\Omega} \vartheta\sqrt{\eps}|\nabla\varphi_\eps|
\Big(\sqrt{\eps}|\nabla\varphi_\eps| - \frac{\sqrt{2W(\varphi_\eps)}}{\sqrt{\eps}}\Big) \,dx dt
\end{align*}
for a.e.\ $T \in (0,\Tstrong)$.
\end{lemma}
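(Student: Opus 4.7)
The plan is to (i) exploit a sign compatibility to drop the absolute values, (ii) differentiate in time using the two available evolution equations, and (iii) reshuffle algebraically to match the claimed identity. The sign conditions~\eqref{eq:conditionsWeight4}--\eqref{eq:conditionsWeight5} on $\vartheta$ pair well with the bounds on $\psi_\eps$ coming from $\varphi_\eps \in [-1,1]$: on $\{\chi = 1\}$ one has $\vartheta < 0$ and $\psi_\eps - c_0 \le 0$, while on $\{\chi = 0\}$ one has $\vartheta > 0$ and $\psi_\eps \ge 0$. Hence $(\psi_\eps - c_0\chi)\vartheta \ge 0$ a.e., so $E_{\mathrm{vol}}[\varphi_\eps|\chi](t) = \int_\Omega (\psi_\eps - c_0\chi)\vartheta\,dx$, a linear expression that I can differentiate directly.

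For the phase field, the identity~\eqref{eq:evolPsi} combined with integration by parts using $\nabla\cdot v_\eps = 0$ yields $\int_\Omega (\partial_t\psi_\eps)\vartheta\,dx = \int_\Omega \psi_\eps(v_\eps\cdot\nabla)\vartheta\,dx - \int_\Omega \eps^{-1}H_\eps\sqrt{2W(\varphi_\eps)}\,\vartheta\,dx$. For the characteristic function, the transport--curvature equation~\eqref{eq:sharpInterfaceLimit3} combined with $H_\II = -\nabla\cdot\xi$ on $\II$ (from~\eqref{eq:conditionsXiB5}) and another integration by parts (using $\nabla\cdot v = 0$, $v|_{\partial\Omega}=0$) gives $\tfrac{d}{dt}\int_\Omega \chi\vartheta\,dx = \int_\Omega \chi\bigl(\partial_t\vartheta + (v\cdot\nabla)\vartheta\bigr)\,dx + \int_\Omega (\nabla\cdot\xi)\,\vartheta\,d|\nabla\chi|$. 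Combining the two produces the starting identity
\[\tfrac{d}{dt}E_{\mathrm{vol}} = \int_\Omega (\psi_\eps - c_0\chi)\partial_t\vartheta\,dx + \int_\Omega \bigl[\psi_\eps(v_\eps\cdot\nabla)\vartheta - c_0\chi(v\cdot\nabla)\vartheta\bigr]\,dx - \int_\Omega \eps^{-1}H_\eps\sqrt{2W}\,\vartheta\,dx - c_0\int_\Omega (\nabla\cdot\xi)\,\vartheta\,d|\nabla\chi|.\]

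The remaining work is a three-step algebraic rearrangement. \emph{(a)} To produce lines 1, 2, and 4, I add and subtract $(\psi_\eps - c_0\chi)(B\cdot\nabla)\vartheta$ inside the transport terms and integrate by parts on the $B$-piece; the non-solenoidality of $B$ is precisely what yields the $\vartheta(\nabla\cdot B)$ factor in line~2, while the velocity mismatch $v - v_\eps$ appears naturally as line~4. \emph{(b)} To dispose of the codimension-one integral $c_0(\nabla\cdot\xi)\vartheta\,d|\nabla\chi|$, I use the Gibbs--Thomson-like estimate~\eqref{eq:conditionsXiB8} by writing $\nabla\cdot\xi = -(B-v)\cdot\xi + [(B-v)\cdot\xi + \nabla\cdot\xi]$ (the bracket vanishes on $\II$), and then replace $c_0|\nabla\chi|$ by $|\nabla\psi_\eps|$. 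The resulting defect is disentangled along $v - B = ((v-B)\cdot\xi)\xi + (v-B)\cdot(\no_\eps - \xi)$ and via $\nabla\psi_\eps = \no_\eps|\nabla\psi_\eps|$ from~\eqref{eq:identitiesNormals}, producing line~3 together with partial contributions to lines 5, 8, 9. \emph{(c)} Finally, I pair the leftover curvature term $-\eps^{-1}H_\eps\sqrt{2W(\varphi_\eps)}\,\vartheta$ with the new $((B-v)\cdot\xi)|\nabla\psi_\eps|$ and $((B-v)\cdot\xi)\eps|\nabla\varphi_\eps|^2$ contributions; completing squares in the precise patterns $\eps^{-1}|H_\eps + \sqrt{2W}(\nabla\cdot\xi)|^2$ and $\eps^{-1}|H_\eps - ((B-v)\cdot\xi)\eps|\nabla\varphi_\eps||^2$ (mirroring the dissipation squares of Proposition~\ref{prop:relEntropyInequality}, but here only their cross terms survive) closes the computation as lines 6 and 7, while the remaining equipartition residues take their place in lines 5, 8, and 9.

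The main obstacle is the bookkeeping in steps~(b) and~(c). Three distinct ``small'' quantities are in play simultaneously: the Gibbs--Thomson defect $(B-v)\cdot\xi + \nabla\cdot\xi$, the geometric excess $\no_\eps - \xi$, and the equipartition defect $\sqrt{\eps}|\nabla\varphi_\eps| - \eps^{-1/2}\sqrt{2W(\varphi_\eps)}$. Each of the nine target lines corresponds to a very specific product of these three error sources with one of $|\nabla\psi_\eps|$, $\eps|\nabla\varphi_\eps|^2$, or a scalar curvature, and the delicate part will be to route each algebraic piece into exactly the prescribed combination, without leaving behind uncontrolled residues that would later spoil the Gronwall argument closing the proof of Theorem~\ref{theo:mainResult}.
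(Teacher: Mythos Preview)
Your overall strategy---drop the absolute value via the sign compatibility, differentiate, then rearrange---is correct and matches the paper. The crucial gap is in your step~(b).

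The codimension-one integral $c_0\int_\Omega (\nabla\cdot\xi)\,\vartheta\,d|\nabla\chi|$ is simply \emph{zero}: the sign conditions~\eqref{eq:conditionsWeight4}--\eqref{eq:conditionsWeight5} together with the continuity~\eqref{eq:conditionsWeight1} force $\vartheta \equiv 0$ along~$\II$, and $|\nabla\chi|$ is supported there. Your proposed manoeuvre---rewriting $\nabla\cdot\xi$ via~\eqref{eq:conditionsXiB8} and then ``replacing $c_0|\nabla\chi|$ by $|\nabla\psi_\eps|$''---is not a valid operation (these are mutually singular measures, and the resulting ``defect'' has no representation among the nine target lines). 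Moreover, the lemma assumes only the regularity~\eqref{eq:conditionsXiB1}--\eqref{eq:conditionsXiB2}, not~\eqref{eq:conditionsXiB8}, so this condition is not available to you here; an exact identity cannot hinge on such an estimate.

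Once you drop the surface integral, the $(B{-}v)$-contributions in lines~3 and 5--9 must come from elsewhere. The paper obtains them by routing the $B$-advection onto $\psi_\eps$ rather than onto $\vartheta$: after using $\vartheta|_{\II}=0$ to kill the $\partial_t\chi$ piece, one adds and subtracts $(B\cdot\nabla)\psi_\eps$ to reach $\int_\Omega \vartheta\bigl(\partial_t\psi_\eps + (B\cdot\nabla)\psi_\eps\bigr)$, inserts~\eqref{eq:evolPsi}, and splits $(B-v_\eps)\cdot\nabla\psi_\eps = (B-v)\cdot\no_\eps|\nabla\psi_\eps| + (v-v_\eps)\cdot\nabla\psi_\eps$. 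The first piece, further decomposed along $\no_\eps = (\no_\eps-\xi)+\xi$, produces line~3 and feeds into lines~5--9 together with the curvature term $-\eps^{-1}H_\eps\sqrt{2W(\varphi_\eps)}\,\vartheta$; the second, after integration by parts and a second use of $\vartheta|_{\II}=0$ to smuggle in $c_0\chi$, gives line~4. Your step~(c) then goes through essentially as you describe, but with this corrected source for the $(B-v)\cdot\xi$ terms.
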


\begin{proof}
First, since for a.e.\ $t \in (0,\Tweak)$ we have $|\varphi_\eps(\cdot,t)| \leq 1$ 
$\mathcal{L}^d$-a.e.\ in~$\Omega$, which in turn is guaranteed by a maximum principle
argument and the analogous condition for the initial phase-field~$\varphi_{\eps,0}$,
it follows that $\psi_\eps \in [0,c_0]$ and thus from the conditions~\eqref{eq:conditionsWeight4} 
and~\eqref{eq:conditionsWeight5} that
\begin{align}
\label{eq:auxBulkErrorTimeEvol1}
E_{\mathrm{vol}}[\varphi_\eps|\chi](T)
= \int_{\Omega} (\psi_\eps {-} c_0\chi)(\cdot,T) \, \vartheta(\cdot,T) \,dx
\end{align}
for a.e.\ $T \in (0,\Tstrong)$.
Second, as a consequence of $\vartheta \equiv 0$ along~$\II$, cf.\ again
to this end~\eqref{eq:conditionsWeight4} and~\eqref{eq:conditionsWeight5}, 
we also observe that it holds $c_0\int_0^T\int_{\Omega} \vartheta\partial_t\chi \,dx dt = 0$
for a.e.\ $T \in (0,\Tstrong)$.
Hence, based on these two properties we may compute
by an application of the fundamental theorem of calculus,
the regularity of the phase-field~$\varphi_\eps$, 
the condition~\eqref{eq:conditionsWeakSolPhaseField7},
adding zero, as well as the product rule that
\begin{align}
\nonumber
E_{\mathrm{vol}}[\varphi_\eps|\chi](T)
&= E_{\mathrm{vol}}[\varphi_\eps|\chi](0)
+ \int_0^T\int_{\Omega} (\psi_\eps {-} c_0\chi) \partial_t\vartheta \,dx dt
+ \int_0^T\int_{\Omega} \vartheta \partial_t\psi_\eps  \,dx dt
\\& \label{eq:auxBulkErrorTimeEvol2}
= E_{\mathrm{vol}}[\varphi_\eps|\chi](0)
+ \int_0^T\int_{\Omega} (\psi_\eps {-} c_0\chi) 
(\partial_t\vartheta {+} (B \cdot \nabla)\vartheta) \,dx dt
\\&~~~ \nonumber
- \int_0^T\int_{\Omega} (\psi_\eps {-} c_0\chi)\nabla\cdot \big(B\vartheta\big) \,dx dt
+ \int_0^T\int_{\Omega} (\psi_\eps {-} c_0\chi)\vartheta\nabla\cdot B \,dx dt
\\&~~~ \nonumber
+ \int_0^T\int_{\Omega} \vartheta \partial_t\psi_\eps  \,dx dt
\end{align} 
for a.e.\ $T \in (0,\Tstrong)$. Integrating by parts and exploiting again that
$\vartheta \equiv 0$ along~$\II$, we may upgrade~\eqref{eq:auxBulkErrorTimeEvol2} 
to 
\begin{align}
E_{\mathrm{vol}}[\varphi_\eps|\chi](T)
&= 
\label{eq:auxBulkErrorTimeEvol3}
E_{\mathrm{vol}}[\varphi_\eps|\chi](0)
+ \int_0^T\int_{\Omega} (\psi_\eps {-} c_0\chi) 
(\partial_t\vartheta {+} (B \cdot \nabla)\vartheta) \,dx dt
\\&~~~ \nonumber
+ \int_0^T\int_{\Omega} (\psi_\eps {-} c_0\chi)\vartheta\nabla\cdot B \,dx dt
\\&~~~ \nonumber
+ \int_0^T\int_{\Omega} \vartheta (\partial_t\psi_\eps {+} (B\cdot\nabla)\psi_\eps)  \,dx dt
\end{align} 
for a.e.\ $T \in (0,\Tstrong)$. The remainder of the proof takes care of 
suitably post-processing the last right hand side term from the previous 
display incorporating an advective derivative for the $BV$-approximation~$\psi_\eps$
of~$c_0\chi$. 

To this end, inserting in a first step the identity~\eqref{eq:evolPsi},
making use of~\eqref{eq:identitiesNormals} in a second step, and finally adding zero yields
\begin{align}
\nonumber
&\int_0^T\int_{\Omega} \vartheta (\partial_t\psi_\eps {+} (B\cdot\nabla)\psi_\eps)  \,dx dt
\\& \nonumber
= \int_0^T\int_{\Omega} \vartheta(B\cdot\nabla)\psi_\eps \,dx dt
- \int_0^T\int_{\Omega} \vartheta\Big(\frac{H_\eps}{\sqrt{\eps}} 
\frac{\sqrt{2W(\varphi_\eps)}}{\sqrt{\eps}} + (v_\eps\cdot\nabla)\psi_\eps\Big) \,dx dt
\\& \label{eq:auxBulkErrorTimeEvol4}
= \int_0^T\int_{\Omega} \vartheta\big((B-v)\cdot(\no_\eps-\xi)\big)|\nabla\psi_\eps| \,dx dt
\\&~~~ \nonumber
+ \int_0^T\int_{\Omega} \vartheta\big((B-v)\cdot \xi\big)|\nabla\psi_\eps| \,dx dt
- \int_0^T\int_{\Omega} \vartheta\frac{H_\eps}{\sqrt{\eps}}\frac{\sqrt{2W(\varphi_\eps)}}{\sqrt{\eps}} \,dx dt 
\\&~~~ \nonumber
+ \int_0^T\int_{\Omega} \vartheta\big((v-v_\eps)\cdot\nabla\big)\psi_\eps \,dx dt.
\end{align}
Note next that we may rewrite
\begin{equation}
\label{eq:auxBulkErrorTimeEvol5}
\begin{aligned}
&\int_0^T\int_{\Omega} \vartheta\big((v-v_\eps)\cdot\nabla\big)\psi_\eps \,dx dt
\\&
= - \int_0^T\int_{\Omega} \psi_\eps\big((v {-} v_\eps)\cdot\nabla)\vartheta \,dx dt
= - \int_0^T\int_{\Omega} (\psi_\eps {-} c_0\chi) \big((v {-} v_\eps)\cdot\nabla)\vartheta \,dx dt.
\end{aligned}
\end{equation}
Indeed, the first equality simply relies on the solenoidality of the velocity 
fields~$v_\eps$ and~$v$, respectively, in combination with an integration by parts. 
The second equality in turn exploits $\vartheta \equiv 0$ along~$\II$
which---by the solenoidality of the velocity fields---allows to smuggle in the same term
but with~$\psi_\eps$ being replaced by~$c_0\chi$. Furthermore, adding zero several times entails
\begin{align}
\nonumber
&\int_0^T\int_{\Omega} \vartheta\big((B-v)\cdot \xi\big)|\nabla\psi_\eps| \,dx dt
- \int_0^T\int_{\Omega} \vartheta\frac{H_\eps}{\sqrt{\eps}}\frac{\sqrt{2W(\varphi_\eps)}}{\sqrt{\eps}} \,dx dt  
\\&\label{eq:auxBulkErrorTimeEvol6}
= \int_0^T\int_{\Omega} \vartheta\big((B-v)\cdot \xi\big)
\big(|\nabla\psi_\eps| - \eps|\nabla\varphi_\eps|^2\big) \,dx dt
\\&~~~\nonumber
+ \int_0^T\int_{\Omega} \Big(\big((B-v)\cdot\xi\big)\sqrt{\eps}|\nabla\varphi_\eps| 
- \frac{H_\eps}{\sqrt{\eps}}\Big)\vartheta\sqrt{\eps}|\nabla\varphi_\eps| \,dx dt
\\&~~~\nonumber
+ \int_0^T\int_{\Omega} \vartheta \Big(\frac{H_\eps}{\sqrt{\eps}}
+ \frac{\sqrt{2W(\varphi_\eps)}}{\sqrt{\eps}}(\nabla\cdot\xi)\Big)
\Big(\sqrt{\eps}|\nabla\varphi_\eps| - \frac{\sqrt{2W(\varphi_\eps)}}{\sqrt{\eps}}\Big) \,dx dt
\\&~~~\nonumber
+ \int_0^T\int_{\Omega} \vartheta(\nabla\cdot\xi)\Big|
\sqrt{\eps}|\nabla\varphi_\eps| - \frac{\sqrt{2W(\varphi_\eps)}}{\sqrt{\eps}}\Big|^2 \,dx dt
\\&~~~\nonumber
- \int_0^T\int_{\Omega} \vartheta\sqrt{\eps}|\nabla\varphi_\eps|
\Big(\sqrt{\eps}|\nabla\varphi_\eps| - \frac{\sqrt{2W(\varphi_\eps)}}{\sqrt{\eps}}\Big) \,dx dt.
\end{align}
Feeding back \eqref{eq:auxBulkErrorTimeEvol4}--\eqref{eq:auxBulkErrorTimeEvol6} 
into~\eqref{eq:auxBulkErrorTimeEvol3} thus yields the desired result.
\end{proof}

\section{Proof of main results}\label{sec:proofMainResult}

\begin{proof}[Proof of Theorem~\ref{theo:mainResult}]
The proof is split into six steps.

\textit{Step 1: Construction of the triple $(\xi,B,\vartheta)$.}
Fix $T \in [0,\Tstrong)$. Due to the assumed regularity of the
evolving geometry underlying the strong solutions~$(\chi,v)$,
cf.\ item~\textit{iii)} of Definition~\ref{def:strongSolSharpInterfaceLimit},
there exists a small scale $\delta=\delta(\chi,T) \in (0,\smash{\frac{1}{2}}]$
such that the signed distance~$s_{\II}(\cdot,t)$ to~$\II(t)$ as well as the
nearest point projection~$P_{\II}(\cdot,t)$ onto~$\II(t)$, $t \in [0,T]$,
are regular in the sense that
\begin{align}
\label{eq:proofMainResultAux1}
s_{\II} &\in (C^1_tC^{1}_x \cap C_tC^{3}_x)\big(\overline{B_{2\delta}(\II)}\big),
\\
\label{eq:proofMainResultAux2}
\big((x,t) \mapsto P_{\II}(x,t) := (x {-} (s_{\II} \nabla s_{\II})(x,t),t)\big) 
&\in (C^1_tC_x {\cap} C_tC^{2}_x)\big(\overline{B_{2\delta}(\II)}\big).
\end{align}
We remark in this context that the signed distance is oriented
by the requirement $\nabla s_{\II} = \no_{\II}$ on~$\II$, and
that for every $r \in (0,1]$ the associated space-time tubular neighborhood~$B_r(\II)$
of~$\II$ is defined by
\begin{align}
\label{eq:proofMainResultAux3}
B_r(\II) := \bigcup_{t \in [0,T]} B_r(\II(t)) \times \{t\}. 
\end{align}
Due to~\eqref{eq:conditionsStrongSolSharpInterfaceLimit9} 
and~\eqref{eq:conditionsStrongSolSharpInterfaceLimit10}, we
may further choose~$\delta$ small enough such that
\begin{align}
\label{eq:proofMainResultAux4}
\overline{B_{2\delta}(\II(t))} \subset \Omega
\quad\text{for all } t \in [0,T]. 
\end{align}

Next, let $\bar\eta\colon\mathbb{R}\to[0,1]$ be a smooth
and even profile with $\supp\bar\eta \subset [-1,1]$ and with
quadratic decay at the origin in the sense of
\begin{align}
\label{eq:proofMainResultAux5}
c_{\bar\eta} r^2 \leq 1 - \bar\eta(r) \leq C_{\bar\eta} r^2
\quad\text{for all } r \in [-1,1],
\end{align}
for some constants $0<c_{\bar\eta}<C_{\bar\eta}<\infty$.
We further choose a smooth and even profile~$\widetilde\eta\colon\mathbb{R}\to[0,1]$
such that $\supp\widetilde\eta\subset [-2,2]$ and
\begin{align}
\label{eq:proofMainResultAux13}
\widetilde\eta \equiv 1 \quad\text{on } [-1,1].
\end{align}
We then define two quadratic cutoffs
\begin{align}
\label{eq:proofMainResultAux6}
\eta_{\II}(x,t) &:= \bar\eta\big(s_{\II}(x,t)/\delta\big),
&&(x,t) \in \Rd{\times}[0,T],
\\
\label{eq:proofMainResultAux14}
\widetilde\eta_{\II}(x,t) &:= \widetilde\eta\big(s_{\II}(x,t)/\delta\big),
&&(x,t) \in \Rd{\times}[0,T].
\end{align}
Note that
\begin{align}
\label{eq:proofMainResultAux12}
\supp\eta_{\II}(\cdot,t) &\subset \overline{B_{\delta}(\II(t))}
&&\text{for all } t \in [0,T],
\\
\label{eq:proofMainResultAux15}
\supp\widetilde\eta_{\II}(\cdot,t) &\subset \overline{B_{2\delta}(\II(t))}
&&\text{for all } t \in [0,T],
\\
\label{eq:proofMainResultAux16}
\widetilde\eta_{\II}(\cdot,t) &\equiv 1 
&&\text{on } \overline{B_{\delta}(\II(t))} \text{ for all } t \in [0,T].
\end{align}

With these ingredients in place, we define the vector fields~$\xi$ and~$B$ by means of
\begin{align}
\label{eq:proofMainResultAux7}
\xi &:= \eta_{\II} \nabla s_{\II}
&&\text{on } \Omega \times [0,T],
\\
\label{eq:proofMainResultAux8}
B &:= \big(\big((v \circ P_{\II}) \cdot \nabla s_{\II}\big) 
- (\Delta s_{\II}) \circ P_{\II}\big) \,\widetilde\eta_{\II}\nabla s_{\II}
&&\text{on } \Omega \times [0,T],
\end{align}
where~$v$ is the velocity field of the strong solution.

For a construction of a suitable weight~$\vartheta$, we 
first fix a smooth and odd map $\bar\vartheta\colon\mathbb{R}\to [-1,1]$
representing a suitable truncation of the (negative of the) identity
in the sense that
\begin{align}
\label{eq:proofMainResultAux9}
&\bar\vartheta \equiv 1 \text{ on } (-\infty,-1]
\quad\text{and}\quad \bar\vartheta \equiv -1 \text{ on } [1,\infty),
\\
\label{eq:proofMainResultAux9b}
&\bar\vartheta > 0 \text{ in } (-1,0)
\quad\text{and}\quad \bar\vartheta < 0 \text{ in } (0,1),
\\
\label{eq:proofMainResultAux10}
&c_{\bar\vartheta}r \leq |\bar\vartheta(r)| \leq C_{\bar\vartheta} r
\quad\text{for all } r \in [-1,1],
\end{align}
for some constants $0<c_{\bar\vartheta}<C_{\bar\vartheta}<\infty$.
We may then define
\begin{align}
\label{eq:proofMainResultAux11}
\vartheta &:= \bar\vartheta\big(s_{\II}/\delta\big)
&&\text{on } \Omega \times [0,T].
\end{align}

\textit{Step 2: Proof of the conditions~\emph{\eqref{eq:conditionsXiB1}--\eqref{eq:conditionsXiB9}}.}
The required regularity~\eqref{eq:conditionsXiB1}--\eqref{eq:conditionsXiB3}
is immediate from the definitions~\eqref{eq:proofMainResultAux7}
and~\eqref{eq:proofMainResultAux8} as well as the 
regularity~\eqref{eq:conditionsStrongSolSharpInterfaceLimit4}, 
\eqref{eq:proofMainResultAux1} and~\eqref{eq:proofMainResultAux2}
of the associated building blocks. Furthermore, because of~\eqref{eq:proofMainResultAux12},
\eqref{eq:proofMainResultAux15} and~\eqref{eq:proofMainResultAux4} it follows that the 
vector fields~$\xi$ and~$B$ are indeed compactly supported within~$\Omega$.

We next note that the coercivity estimate~\eqref{eq:conditionsXiB4}
directly follows from the definition~\eqref{eq:proofMainResultAux7}
and the lower bound from~\eqref{eq:proofMainResultAux5}.
For a proof of the consistency conditions~\eqref{eq:conditionsXiB5},
$\xi = \no_{\II}$ along~$\II$ simply follows from
the definition~\eqref{eq:proofMainResultAux7} as well as $\bar\eta(0) = 0$
whereas $\nabla\cdot\xi = -H_{\II}$ along~$\II$ 
is a consequence of the definition~\eqref{eq:proofMainResultAux7},
$\bar\eta'(0)=0$ as well as the well-known identity $\Delta s_{\II} = -H_{\II}$
along~$\II$.

We proceed with a proof of the (approximate) evolution 
equations~\eqref{eq:conditionsXiB6} and~\eqref{eq:conditionsXiB7}.
The argument is based on the claim
\begin{align}
\label{eq:proofMainResultAux17}
\big(\partial_t s_{\II} + (B \cdot \nabla)s_{\II}\big)(\cdot,t) \equiv 0
\quad\text{on } \overline{B_{\delta}(\II(t))},\, t \in [0,T].
\end{align}
Since $\partial_t s_{\II} = - V_{\II}$ along~$\II$, the 
identity~\eqref{eq:proofMainResultAux17} is indeed valid
as a consequence of the definition~\eqref{eq:proofMainResultAux8},
the property~\eqref{eq:proofMainResultAux16}, item~\textit{iv)}
of Definition~\ref{def:strongSolSharpInterfaceLimit} in form
of $V_{\II} = (B\cdot\nabla)s_{\II}$ along~$\II$, as well as
$\partial_t s_{\II} = (\partial_t s_{\II}) \circ P_{\II}$
(the latter following from a straightforward computation
based on differentiating $s_{\II} \circ P_{\II}\equiv 0$).
Note then that~\eqref{eq:proofMainResultAux17}
together with the chain rule immediately implies
\begin{align}
\label{eq:proofMainResultAux18}
\big(\partial_t \eta_{\II} + (B \cdot \nabla)\eta_{\II}\big)(\cdot,t) \equiv 0
\quad\text{on } \overline{B_{\delta}(\II(t))},\, t \in [0,T].
\end{align}
This in turn directly entails~\eqref{eq:conditionsXiB7} due to
the simple observation $|\xi|^2=\eta_{\II}^2$, which itself follows from
the definition~\eqref{eq:proofMainResultAux7}. Furthermore,
\eqref{eq:proofMainResultAux18} reduces the proof of~\eqref{eq:conditionsXiB6}
to a proof of
\begin{align}
\label{eq:proofMainResultAux19}
\big(\partial_t \nabla s_{\II} {+} (B \cdot \nabla)\nabla s_{\II}
{+} (\nabla B)^\mathsf{T}\nabla s_{\II}\big)(\cdot,t) \equiv 0
\quad\text{on } \overline{B_{\delta}(\II(t))},\, t \in [0,T].
\end{align}
However, \eqref{eq:proofMainResultAux19} simply follows from
taking the spatial gradient of~\eqref{eq:proofMainResultAux17}. 

We finally note that~\eqref{eq:conditionsXiB8} and~\eqref{eq:conditionsXiB9}
are straightforward consequences of the definitions~\eqref{eq:proofMainResultAux7}
and~\eqref{eq:proofMainResultAux8}, the regularity of the associated building blocks,
as well as the already established property~\eqref{eq:conditionsXiB5}.

\textit{Step 3: Proof of the conditions~\emph{\eqref{eq:conditionsWeight1}--\eqref{eq:conditionsWeight6}}.}
The regularity requirements~\eqref{eq:conditionsWeight1} and~\eqref{eq:conditionsWeight2}
are immediate from the definition~\eqref{eq:proofMainResultAux11}
and the regularity~\eqref{eq:proofMainResultAux1}. The coercivity estimate~\eqref{eq:conditionsWeight3}
as well as the sign conditions~\eqref{eq:conditionsWeight4}--\eqref{eq:conditionsWeight5}
in turn follow directly from the definition~\eqref{eq:proofMainResultAux11} and
the properties~\eqref{eq:proofMainResultAux9}--\eqref{eq:proofMainResultAux10}.
Finally, the approximate transport equation~\eqref{eq:conditionsWeight6}
simply results from a combination of~\eqref{eq:proofMainResultAux17} and the chain rule.

\textit{Step 4: Derivation of the stability  
estimates~\eqref{eq:gronwallEstimateRelativeEntropy} and~\eqref{eq:gronwallEstimatePhaseError}.}
Applying \eqref{eq:coercivityRelEntropy1}--\eqref{eq:coercivityRelEntropy4} 
and \eqref{eq:conditionsXiB6}--\eqref{eq:conditionsXiB7}  to the inequality in  
Proposition~\ref{prop:relEntropyInequality}, we have for a.e.\ $T\in (0,T_*)$ that
\begin{align}
&E[\varphi_\eps,v_\eps|\chi,v](T)\nonumber
\\
&\leq E[\varphi_\eps,v_\eps|\chi,v](0)
 +C\int_{0}^{T}E[\varphi_\eps,v_\eps|\chi,v](t) dt- \int_{0}^{T}\int_{\Omega} |\nabla v_\eps - \nabla v|^2 \,dx dt\nonumber
\\&~~~
- \int_{0}^{T}\int_{\Omega} \frac{1}{2\eps} 
	\Big| H_\eps + \sqrt{2W(\varphi_\eps)}(\nabla\cdot\xi) \Big|^2 \,dx dt\label{thm1prooftail1}
\\&~~~
- \int_{0}^{T}\int_{\Omega} \frac{1}{2\eps} 
  \Big| H_\eps - \big((B - v)\cdot\xi\big)\eps|\nabla\varphi_\eps| \Big|^2 \,dx dt\label{thm1prooftail2}
\\&~~~
- \int_{0}^{T}\int_{\Omega} (c_0\chi - \psi_\eps) \big((v_\eps-v)\cdot\nabla\big)(\nabla\cdot\xi) \,dx dt\label{thm1prooftail3}
\\&~~~
+ \int_{0}^{T}\int_{\Omega} \big|(B-v)\cdot\xi + \nabla\cdot\xi\big|^2\eps|\nabla\varphi_\eps|^2 \,dx dt\label{thm1prooftail4}
\\&~~~
- \int_{0}^{T}\int_{\Omega} \frac{1}{\sqrt{\eps}}\big(H_\eps + \sqrt{2W(\varphi_\eps)}(\nabla\cdot\xi)\big)
\big((v-B)\cdot(\no_\eps-\xi)\big)\sqrt{\eps}|\nabla\varphi_\eps| \,dx dt\label{thm1prooftail5}
\\&~~~
-\int_{0}^{T}\int_{\Omega} \big(\no_\eps\otimes\no_\eps - \xi\otimes\xi\big) : \nabla B\, 
\big(\eps|\nabla\varphi_\eps|^2 - |\nabla\psi_\eps|\big) \,dx dt\label{thm1prooftail6}
\\&~~~
-\int_{0}^{T}\int_{\Omega} \xi\otimes\xi : \nabla B\, 
\big(\eps|\nabla\varphi_\eps|^2 - |\nabla\psi_\eps|\big) \,dx dt. \label{thm1prooftail7}
\end{align}
The integral \eqref{thm1prooftail3} is estimated by \eqref{eq:coercivityBulkError2} with a sufficiently small $\lambda>0$. The integral~\eqref{thm1prooftail4} can be estimated using  \eqref{eq:conditionsXiB8} and \eqref{eq:coercivityRelEntropy5}. To estimate  \eqref{thm1prooftail5}, we first employ  Cauchy--Schwarz's inequality,
followed by Young's inequality with a sufficiently small prefactor in order
to exploit the sign of~\eqref{thm1prooftail1}, and then conclude with~\eqref{eq:coercivityRelEntropy5}.
To estimate \eqref{thm1prooftail6}, we write
 \begin{align*}
 \no_\eps\otimes\no_\eps : \nabla B = 
\no_\eps\otimes(\no_\eps {-} \xi): \nabla B
+ (\xi\cdot \nabla )B\cdot (\no_\eps {-} \xi)
+ \xi\otimes\xi : \nabla B
 \end{align*} 
so that we can estimate
 \begin{align*}
 |\no_\eps\otimes\no_\eps : \nabla B - \xi\otimes\xi : \nabla B| \leq  
C\sqrt{1-\no_\eps \cdot \xi}.
 \end{align*}
Because of this,  \eqref{thm1prooftail6} 
can be estimated by using \eqref{eq:coercivityRelEntropy6}. 
Finally, \eqref{thm1prooftail7} can be bounded by
means of~\eqref{eq:conditionsXiB9} and~\eqref{eq:coercivityRelEntropy6}.
All in all, 
\begin{align*}
&E[\varphi_\eps,v_\eps|\chi,v](T)+\frac 12\int_{0}^{T}\int_{\Omega} |\nabla v_\eps - \nabla v|^2 \,dx dt\nonumber
\\&~~~
+ \int_{0}^{T}\int_{\Omega} \frac{1}{4\eps} 
	\Big| H_\eps + \sqrt{2W(\varphi_\eps)}(\nabla\cdot\xi) \Big|^2 \,dx dt \nonumber
\\&~~~
+ \int_{0}^{T}\int_{\Omega} \frac{1}{4\eps} 
  \Big| H_\eps - \big((B - v)\cdot\xi\big)\eps|\nabla\varphi_\eps| \Big|^2 \,dx dt \nonumber
\\&~~~
\leq  E[\varphi_\eps,v_\eps|\chi,v](0)
 +C\int_{0}^{T}E[\varphi_\eps,v_\eps|\chi,v](t)+E_{\mathrm{vol}}[\varphi_\eps|\chi](t) dt.
\end{align*}

Regarding the evolution of the bulk error in Lemma~\ref{lem:evolBulkError}, 
similar considerations based in addition on~\eqref{eq:coercivityRelEntropy5}, 
\eqref{eq:coercivityRelEntropy6}, \eqref{eq:conditionsWeight2},
\eqref{eq:conditionsWeight3}, \eqref{eq:conditionsWeight4}  and~\eqref{eq:conditionsWeight6}
also lead to 
\begin{align*}
&E_{\mathrm{vol}}[\varphi_\eps|\chi](T)
\\&
\leq  E_{\mathrm{vol}}[\varphi_\eps|\chi](0) 
+ C\int_{0}^{T}E[\varphi_\eps,v_\eps|\chi,v](t)+E_{\mathrm{vol}}[\varphi_\eps|\chi](t) dt
\\&~~~
+\int_{0}^{T}\int_{\Omega}\frac 14 |\nabla v_\eps - \nabla v|^2 \,dx dt
\\&~~~
+ \int_0^T\int_{\Omega} \frac 18 \Big(\big((B-v)\cdot\xi\big)\sqrt{\eps}|\nabla\varphi_\eps| 
- \frac{H_\eps}{\sqrt{\eps}}\Big)^2 \,dx dt
\\&~~~
+ \int_0^T\int_{\Omega} \frac 18 \Big(\frac{H_\eps}{\sqrt{\eps}}
+ \frac{\sqrt{2W(\varphi_\eps)}}{\sqrt{\eps}}(\nabla\cdot\xi)\Big)^2 \,dx dt.
\end{align*}
The above two inequalities together with Gr\"{o}nwall's inequality 
therefore allow to conclude the proof.
\end{proof}

\begin{proof}[Proof of Corollary~\ref{cor:sharpConvergenceRates}]
We proceed in two steps.

\textit{Step 1: From~$E_{\mathrm{vol}}$-control to $L^1(\Omega)$-control of the error in
				the phase indicators.}
We claim that for all $T \in [0,\Tstrong)$ there exists 
a constant $C = C(\chi,v,T) \in (0,\infty)$ such that
\begin{align}
\label{eq:unweightedError}
\big\|c_0\chi(\cdot,t) - \psi_\eps(\cdot,t)\big\|_{L^1(\Omega)}^2
\leq CE_{\mathrm{vol}}[\varphi_\eps|\chi](t)
\end{align}
holds true for all $t \in [0,T]$. 
For the simple proof based on a slicing argument
and an application of Fubini's theorem, we refer, e.g., 
to~\cite[Proof of Theorem~1, Step~2]{Fischer2020b}.

\textit{Step 2: Derivation of the sharp convergence rate~\eqref{eq:sharpConvergenceRate}.}
This now immediately follows from post-processing
the stability estimates~\eqref{eq:gronwallEstimateRelativeEntropy} 
and~\eqref{eq:gronwallEstimatePhaseError} by means
of the bound~\eqref{eq:unweightedError} and the assumption~\eqref{eq:wellPreparedInitialData}.
\end{proof}

\appendix
\section{Well-posedness of the sharp interface limit model}
The goal of this part is to construct  a strong solution in the sense of 
Definition~\ref{def:strongSolSharpInterfaceLimit}.
We   consider the following system which is equivalent 
to~\eqref{eq:sharpInterfaceLimit1}--\eqref{eq:sharpInterfaceLimit8} (with $c_0=1$):
\begin{subequations}\label{model3}
\begin{align}
 \partial_t v +(v \cdot \nabla) v &=\Delta v +\nabla \pi&&~\text{in}~\mathring{\O}(t),  \\
   \nabla\cdot v &=0&&~\text{in}~\mathring{\O}(t), \\
 \jump{ 2D v  -\pi~\mathrm{Id} }\cdot\no_{\II}&=H_{\II} \no_{\II}  &&~\text{on}~\II(t), \label{jump stress} \\
 \jump{ v  }&=0 &&~\text{on}~\II(t), \label{jump u}\\
V_{\II} &= \no_{\II}\cdot v  + H_{\II}
&&~\text{on}~\II(t).
\end{align}
\end{subequations}
Here    
$\mathring{\Omega}(t) :=  \O_+ (t)\cup  \O_- (t)$ is the bulk region and     $  D v := \nabla^\mathrm{sym} v $ is 
 the symmetric part of the  flow gradient.     The major difference of \eqref{model3} to the system 
studied  in \cite{KohnePruessWilke} is that in the latter the interface is purely transported by the 
fluid velocity, i.e., $V_{\II} = \no_{\II}\cdot v$.

   \begin{proposition}\label{local strong limit sys}
Let  $p>d+2$ and $\II(0)\subset \O$ be a $C^3$ closed surface. Assume that 
the initial velocity field~$v_0$ satisfies  $v_0 \in W_{p}^{2-2/p}( \mathring{\O}(0)) \cap W^1_{p}(\Omega)$ 
together with the following compatibility conditions:
\begin{align}
\div   v _0=0~&\text{in}~\mathring{\O}(0), \\
 v _0=0~&\text{on}~\partial \Omega,\\
 \jump{ v _0 }=0 ~&\text {on}~ \II(0),\\
\Pi_{\II(0)}\jump{    \nabla^\mathrm{sym} v _0  \cdot\no_{\II}  }=0~&\text{on}~\II(0),
\end{align}
where 
   $\Pi_{\II(t)}=\mathrm{Id}-\no_{\II(t)} \otimes \no_{\II(t)}$  is   the tangential projection.
 Then  there exists $T>0$ such that    \eqref{model3} has  a unique solution $(v ,\pi, \II)$ with  
  \begin{subequations}\label{regularity up}
\begin{align}
&v \in   W_{p}^{1}\left(0, T ; L^{p}(\Omega)\right)  \cap L^{p}\left(0, T ; W_{p}^{2}(\mathring{\O}(t)) \cap W_{p}^{1}(\Omega)\right),  \label{regularity of u}\\
&\pi\in L^{p}\left(0, T ; \mathring{W}_{p}^{1}(\mathring{\O}(t))\right)~\text{with}~\int_\Omega \pi(\cdot,t)=0,
\\
&\jump{ \pi } \in W_{p}^{1-\frac 1p, \frac{1}{2}(1-\frac 1p)}(\II(t)).
\end{align}
\end{subequations} 
Moreover,  the free boundary 
$\II=\cup_{t\in (0,T)}\II(t) {\times} \{t\}$ is parametrized through the diffeomorphism 
$\Theta_{h}(x,t) \colon\mathring{\O}(0)\mapsto \mathring{\O}(t)$ defined by
\begin{align}
\Theta_{h}(x,t):=x+\zeta\left(\frac{\dist(x,\II(0))}\delta\right) h(P_{\II(0)}(x),t)\, \no_{\II(0)} (x) \quad \text { for all } x \in \Rd,\label{classical hanzawa}
\end{align}
where $P_{\II(t)}$ is the nearest point projection to $\II(t)$, $\zeta \in C^{\infty}(\Rd)$   satisfies 
 \begin{equation}
\left|\nabla\zeta(s)\right| \leq 4~\text{in $\Rd$ and }~\zeta(s)=1~\text{for}~ |s| 
\leq 1/2,~\zeta(s)=0~\text{for}~|s| \geq 1,\label{chi def appendix}
\end{equation}
and  $h$ is the height function
\begin{align}\label{regularity h}
h \in   W_p^{2-\frac 1{2p}}\big( J; L^p(\II(0))\big)  
\cap L^p\big( J; W_p^{4-\frac 1p} (\II(0) )  \big)~\text{with}~h|_{t=0}=0.
\end{align}

 \end{proposition}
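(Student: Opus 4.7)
The plan is to follow the classical Hanzawa transformation plus maximal $L^p$ regularity strategy for two-phase flow problems, adapting the framework of K\"ohne--Pr\"uss--Wilke cited above to accommodate the kinematic condition $V_\II = \no_\II \cdot v + H_\II$, which contains an extra mean-curvature term. First I would fix the reference geometry $\II(0)$ and use the Hanzawa diffeomorphism $\Theta_h$ defined by \eqref{classical hanzawa} to pull back \eqref{model3} to the fixed bulk domain $\mathring\Omega(0)$, yielding a quasilinear problem in the unknowns $(\bar v,\bar\pi,h)$ with $\bar v:=v\circ\Theta_h$, $\bar\pi:=\pi\circ\Theta_h$, and the height function $h$ evolving on $\II(0)$. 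All coefficients then depend smoothly on $h$ and its spatial derivatives up to second order, and the compatibility conditions in the statement translate into those required by the linearized problem at $t=0$.

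Next I would split the transformed system as $\mathcal{L}(\bar v,\bar\pi,h)=\mathcal{N}(\bar v,\bar\pi,h)$ with $\mathcal{L}$ the principal linear part and $\mathcal{N}$ containing only lower-order/quadratic contributions in $(\bar v,\nabla\bar v,\nabla^2\bar v,\bar\pi,h,\nabla h,\nabla^2 h)$. The linear part couples the two-phase Stokes system for $(\bar v,\bar\pi)$ on $\mathring\Omega(0)$ to a scalar equation for $h$ on $\II(0)$ with principal part $\partial_t h - \Delta_{\II(0)} h = \no_{\II(0)}\cdot\bar v|_{\II(0)}$ (arising by linearizing $V_\II = \no_\II\cdot v + H_\II$ around $h\equiv 0$), together with a modified stress jump $\jump{2D\bar v-\bar\pi\,\mathrm{Id}}\no_{\II(0)} = (-\Delta_{\II(0)} h)\no_{\II(0)}$ at leading order. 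The key analytic task is to establish maximal $L^p$ regularity for this linear system in the class \eqref{regularity up}--\eqref{regularity h}. I would obtain it by verifying the Lopatinskii--Shapiro condition for the coupled boundary symbol and then invoking operator-sum / $\mathcal{H}^\infty$-calculus methods in the spirit of Pr\"uss--Simonett; a convenient bookkeeping device is to freeze coefficients on $\II(0)$, reduce to a half-space problem, and compute the boundary symbol explicitly to confirm parabolicity of the coupled $(\bar v,\bar\pi,h)$-block.

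With the linear theory in hand, the nonlinear solvability on a short time interval $[0,T]$ follows from a standard contraction argument: one sets up a fixed-point map on a small ball in the maximal regularity space around the solution of the linearized problem with data $(v_0,\II(0))$. The choice $p>d+2$ ensures the embeddings $W^1_p(0,T;L^p)\cap L^p(0,T;W^2_p)\hookrightarrow C([0,T];C^1)$ for $\bar v$ and $W^{2-\frac{1}{2p}}_p(0,T;L^p)\cap L^p(0,T;W^{4-\frac 1 p}_p)\hookrightarrow C([0,T];C^3)$ for $h$, which provides the Lipschitz estimates on $\mathcal{N}$ with constants that tend to zero as $T\downarrow 0$; this is the standard mechanism that produces contraction for $T$ small enough. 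Uniqueness follows by the same Lipschitz estimates applied to the difference of two solutions. The $C^3$-embedding for $h$ also guarantees that $\Theta_h(\cdot,t)$ is a $C^3$-diffeomorphism for every $t\in[0,T]$, consistent with Definition~\ref{def:strongSolSharpInterfaceLimit}.

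The main obstacle is the maximal-regularity step for the coupled linearization: in the K\"ohne--Pr\"uss--Wilke setting the $h$-equation is only first order in time and gains half a spatial derivative over the trace of $v$, which is handled by a direct trace argument; here, by contrast, the extra $H_\II$ term makes the $h$-equation genuinely second-order parabolic on $\II(0)$, and it is simultaneously coupled to the Stokes block through the curvature contribution in the stress jump. One must therefore redo the symbol analysis for the new boundary operator to confirm the Lopatinskii--Shapiro condition and, consequently, the $\mathcal{R}$-boundedness of the associated boundary symbol on a sector; once this is settled the remainder of the proof is a routine quasilinear perturbation argument that reproduces the regularity class stated in \eqref{regularity up}--\eqref{regularity h}.
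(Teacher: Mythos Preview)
Your proposal is a valid strategy, but it takes a genuinely different route from the paper. You propose to treat the full triple $(\bar v,\bar\pi,h)$ by a single coupled fixed-point argument, which requires establishing maximal $L^p$ regularity for the coupled linearization (two-phase Stokes plus a second-order parabolic equation for $h$ on $\II(0)$, coupled through the stress jump) via a Lopatinskii--Shapiro symbol analysis. The paper instead decouples the problem: it first invokes the existing result of Abels--Moser \cite{zbMATH06951007} to obtain a solution with \emph{unbalanced} regularity $(\tilde v,\tilde\pi,h)\in\mathbb{V}_q\times\mathbb{P}_q\times\mathbb{H}_p$ for some $2<q<3$; then, with $h\in\mathbb{H}_p$ already known and fixed, it solves the transformed fluid system \eqref{flat system1} separately as an equation $\mathscr{L}(\tilde v,\tilde\pi)=\mathscr{N}(\tilde v,\tilde\pi)$ in $(\tilde v,\tilde\pi)$ only, using Shimizu's maximal regularity for the linear two-phase Stokes problem and a contraction estimate in which the smallness comes from $Eh|_{t=0}=0$; finally, the improved trace regularity of $\tilde v$ is fed back into the quasilinear parabolic equation \eqref{flat system2} to upgrade $h$ from $\mathbb{H}_p$ to \eqref{regularity h}.

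What each approach buys: the paper's bootstrap avoids redoing any coupled symbol computation by piggy-backing on \cite{zbMATH06951007} and a standard (uncoupled) Stokes maximal-regularity result; the price is the detour through an auxiliary index $q<3$ and an iteration. Your direct coupled approach is cleaner conceptually and yields the full regularity in one step, but it requires carrying out the Lopatinskii--Shapiro/$\mathcal{R}$-boundedness verification for the new boundary symbol---which is essentially the work done in \cite{zbMATH06951007}. One point you should make explicit: the reason Abels--Moser restrict to $q<3$ is an extra compatibility condition on $\jump{\nabla v_0}$ that appears for large $q$; in the present equal-viscosity setting Lemma~\ref{lem:noJump} shows $\jump{\nabla v}=0$, so this obstruction disappears and your direct approach at the target index $p>d+2$ is indeed admissible.
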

 
Throughout the next three subsection, we sketch the main ingredients for a proof of
Proposition~\ref{local strong limit sys} and thus conclude with the regularity stated
in Definition~\ref{def:strongSolSharpInterfaceLimit}.

\subsection{Preliminaries}
We will  employ   the following notation:
 \begin{subequations}
 \begin{align} 
\mathring{W}^k_p(\O)&= \{ \partial^\alpha f\in L^p(\O), |\alpha |=k\},\\
W^k_p(\O)&= \{ \partial^\alpha f\in L^p(\O), |\alpha |\leq k\},\\
L^p_{(0)}(\O)&= \Big\{   f\in L^p(\O), \int_\O f\, dx =0 \Big\}.
\end{align}
 \end{subequations}
We need elementary results from interpolation theory and maximal regularity theory 
of parabolic system, see to this end, for instance, \cite[Section 4.10]{MR1345385} and \cite{MR1783238}. 
For a Banach space  $(X,\|\cdot\|_X)$ and $s\in (0,1), p\geq 1$,  we also 
recall the  Sobolev--Slobodeckij space $W_{p}^{s}(J ; X)$ normed by
\begin{align}
\|f\|_{W_{p}^{s}(J; X)}:=\|f\|_{L^p(J ; X)}+[f]_{s,p}<\infty,
\end{align}
where $J=[0,T]$ and 
\begin{align}
[f]_{s,p,X}:=\(\int_{J} \int_{J} \frac{\|f(t)-f(\tau)\|_{X}^p}{|t-\tau|^{p s+1}} d t d \tau\)^{\frac 1 p}.\label{slobo norm}
\end{align}
For $s\in (m,m+1)$ with $m$ being a positive integer, we finally define 
\begin{align}
\|f\|_{W_{p}^{s}(J ; X)}:=\|f\|_{W_p^m(J ; X)}+\max_{|\alpha|=m}[\partial^\alpha f]_{s-m,p,X}.
\end{align}

Now we state the required interpolation inequalities involving these spaces. 
We denote by $(E_0,E_1)_{\theta, p}$  the  real interpolation of Banach 
spaces $E_0$ and $E_1$.  For $\theta\in [0,1]$ and Banach spaces $X_0,X_1$, we then have 
\begin{align}\label{general interpolation}
W_{p}^{1}\left(J ; X_{0}\right) \cap  L_p\left(J ; X_{1}\right)&\hookrightarrow W^{1-\theta}_p\left(J ; (X_{0},X_1)_{\theta,p}\right),\\
W_{p}^{1}\left(J ; X_{0}\right) \cap  L_p\left(J ; X_{1}\right) &\hookrightarrow C^{1-\theta-\frac 1p}\left(J ; (X_0,X_1)_{\theta, p}\right),\label{general interpolation conti}
\end{align}
where the second   embedding holds provided $1-\theta\geq \frac 1p$.
In particular, 
\begin{align}\label{interpo in conti time}
W_{p}^{1}\left(J ; X_{0}\right) \cap L_p\left(J ; X_{1}\right) \hookrightarrow C^{0}\left(J ; (X_0,X_1)_{1- 1/p, p}\right).
\end{align}
We will also rely on the following inequality
\begin{align}\label{wsp embed cs}
\|f\|_{W^s_p(J; X)} \leqslant C_{s, s',p} T^{\left(s'-s\right)+\frac 1p}\|f\|_{C^{s'}(J ; X)}\end{align}
provided that $0<s<s' \leqslant 1,0<T \leqslant 1$.
Indeed, 
\begin{align*}
[f]_{s,p,X}^p & \overset{\eqref{slobo norm}}=\int_J \int_J \frac{\|f(t)-f(\tau)\|_{X}^p}{|t-\tau|^{p s+1}} d t d \tau \\
& ~~\leqslant \int_J \int_J|t-\tau|^{p\left(s'-s\right)-1} d t d \tau\|f\|_{C^{s'}(J ; X)}^p\\
& ~~\leqslant C_{s', s,p} T^{p\left(s'-s\right)+1}\|f\|_{C^{s'}(J ; X)}^p
\end{align*}
for all $0<s<s' \leqslant 1$, and this   implies \eqref{wsp embed cs}.

\subsection{The work of Abels and Moser \cite{zbMATH06951007}}

We shall first reduce \eqref{model3} to a system with fixed domains.
Assume $ \dist(\II, \partial \Omega)>4\delta$.
For $h \in C^{2}(\II(0)\times J)$ with $\|h\|_{L^\infty_{x,t}}<2\delta$,  
the Hanzawa transformation   \eqref{classical hanzawa} is a family of  diffeomorphisms
 \[\Theta_{h}(\cdot,t)\colon \II(0)\mapsto \II(t)~\text{ and }~\mathring{\O}(0)\mapsto \mathring{\O}(t).\]
  Recalling \eqref{chi def appendix}, we see that for a fixed $t$, $\Theta_{h}(\cdot,t)$ is   
	the identity map  on $\Rd \backslash  B_{\delta}(\II(t) )$; in particular near $\partial \Omega .$ Moreover $\operatorname{det} \nabla \Theta_{h} \geq c>0$ and 
\[
  \quad\left\|\nabla \Theta_{h}\right\|_{L^\infty_{x,t}}+\left\|\nabla  \Theta_{h}^{-1}\right\|_{L^\infty_{x,t}} \leq C\left(1+\|h\|_{C^{1}(\II(0)\times J)}\right)
\]
with $c, C>0$ independent of $h$. For technical reasons, it is   simpler to replace $h \circ P_{\II(0)}$ in \eqref{classical hanzawa} by an extension $ E h$ where 
\begin{align}
E\colon X_{0}&:=W_{p}^{1-1/p}(\II(0)) \mapsto W_{p}^{1} \(B_{ \delta}(\II(0)) \)\label{extension operator E}
\end{align}
with  $E$ being  
a bounded and linear extension operator of class
\begin{align}
E \in \mathscr{L}\left(W_{p}^{k-1/p}(\II(0)), W_{p}^{k}\left(B_{ \delta}(\II(0)) \right)\right), \quad \forall k\in \{1,\ldots, 4\}.\label{E extension 123}
\end{align}


Based on this extension operator, we define a modified  Hanzawa transform   by
\begin{align}
y=\tilde{\Theta}_{h}(x,t):=x+\zeta\left(\frac{\dist(x,\II(0))}{\delta}\right) (Eh)(x,t) \no_{\II(0)}(x), \quad \forall x \in \Rd.\label{extend hanzawa}
\end{align}
 Let $(v ,\pi)$ be a solution of \eqref{model3}. We then define the transformed solution by  
\begin{align}\label{tv to v}
\tilde{v }(x, t):=v  (\tilde{\Theta}_{h}(x, t), t ),\quad \tilde{\pi}(x, t):=\pi(\tilde{\Theta}_{h}(x, t), t),
\end{align}
 for $(x, t) \in \Omega \times(0, T)$.
Equivalently   for $y\in \mathring{\O}(t)$, we have  
\begin{align}\label{v to tv}
v (y, t):=\tilde{v } (\tilde{\Theta}_{h}^{-1}(y, t), t ),\quad  \pi(y, t):=\tilde{\pi}(\tilde{\Theta}_{h}^{-1}(y, t), t).
\end{align}
 To identify the system of PDEs satisfied by $(\tilde{v }, \tilde{\pi})$, we introduce  
the following notation:
 \begin{subequations}\label{pertubations}
 \begin{align}
   \nabla^h &:= ((\nabla \tilde{\Theta}_{h})^{-1})^{\mathsf{T}} \nabla,\label{def nablav}\\
   D^h \vv &:= \frac 12 \(\nabla^h \vv+(\nabla^h \vv)^{\mathsf{T}}\),\label{def D h}\\
    \div ^h \tilde{v } &:= \operatorname{tr} \nabla^h \tilde{v },\label{def dh v}\\
 T^h(\tilde{v }, \tilde{\pi}) &:= 2 D^h \tilde{v } -\tilde{\pi}~ \mathrm{Id},\\
 H^h &:= H_{\II}\circ \tilde{\Theta}_h(x,t)\colon \II(0) {\times}  J \rightarrow \Rd,\\
 \no^h &:= \no_{\II}\circ \tilde{\Theta}_h(x,t)\colon \II(0) {\times}  J \rightarrow \Rd.\end{align}
 \end{subequations}
 Then it follows from \eqref{def nablav} and \eqref{v to tv} that 
 \begin{align}\label{gradient equ}
 \nabla_y v|_{y=\tilde{\Theta}_{h}(x, t)} =\nabla^h \tilde{v}(x).
 \end{align}
    Similarly, by \eqref{tv to v} we obtain $\partial_t \tilde{v}(x)=(\partial_t v+  \partial_{t} \tilde{\Theta}_{h}\cdot\nabla v)|_{y=\tilde{\Theta}_{h}(x, t)}.$
With these definitions and formulas, the system \eqref{model3} can be rewritten as one over a fixed domain. 
For simplicity we split  it into two parts, one describing  the hydrodynamics, 
and one for  the evolution of the interface:
\begin{subequations}\label{flat system1}
\begin{align}
\partial_{t} \tilde{v }+\tilde{v } \cdot \nabla^h \tilde{v } &=\div ^h \nabla^h \tilde{v }+\nabla^h \tilde{\pi}+\nabla^h \tilde{v } \,\partial_{t} \tilde{\Theta}_{h} & & \text { in } \mathring{\O}(0) \times(0, T), \\
\div ^h \tilde{v } &=0 & & \text { in } \mathring{\O}(0) \times(0, T), \\
\jump{ 2 D^h \tilde{v } -\tilde{\pi} \mathrm{Id} }\cdot \no^h  &=  H^h \no^h   & & \text { on } \II(0) \times(0, T),\label{transformed stress jump}\\
\jump{ \tilde{v } } &=0 & & \text { on } \II(0) \times(0, T), \\
\left.\tilde{v }\right|_{\partial \Omega} &=0 & & \text { on } \partial \Omega \times(0, T), \\
\left.\tilde{v }\right|_{t=0} &=v_0 & & \text { in } \Omega, 
\end{align}
\end{subequations}
as well as
\begin{subequations}\label{flat system2}
\begin{align}
\partial_{t} h  &= H^h+\tilde{v }\cdot\no^h& &\text { on } \II(0)\times (0,T),\\
 h|_{t=0} &=0 & &\text { on } \II(0).
\end{align}
\end{subequations}
To state the regularity of solutions to the above system,  
we finally introduce the following function spaces:
 \begin{subequations}\label{local well functional space}
\begin{align}\label{def Xq}
&  \mathbb{V}_p:=  W_p^{1}\left( J; L^p(\Omega)\right)  \cap L^p\left( J; W_p^{2} (\mathring{\O}(0) ) \cap W_p^{1}(\Omega)\right),  \\
&  \mathbb{P}_p:= \Big\{L^p\left( J; \mathring{W}_p^{1} (\mathring{\O}(0) )\right)\Big\vert \int_\Omega \tilde{\pi}(\cdot,t)=0,\nonumber\\  &\qquad\qquad\qquad\qquad ~\jump{ \tilde{\pi} } \in W_p^{1-1/p, \frac{1}{2}\left(1-1/p\right)}(\II(0){\times} (0,T))\Big\} ,\label{def Yq}\\
& \mathbb{H}_p:=  W_{p}^{1}\left( J; W_{p}^{1-1/p}(\II(0))\right)\cap L^p\left( J; W_{p}^{3-1/p}(\II(0))\right).\label{space of h}
\end{align}
\end{subequations}

 \begin{lemma}
The following embeddings hold true:
\begin{subequations}
 \begin{align}
 &\mathbb{V}_p  \hookrightarrow C(J;W_p^{2-  2/p}(\mathring{\O}(0))),\label{interpolation conti tX}\\
  &\mathbb{H}_p\hookrightarrow C\( J ; W_{p}^{3-3/p}(\II(0))\)\hookrightarrow C ( J ; C^2(\II(0))).\label{interpolation conti tZ}
 \end{align}
\end{subequations}
\end{lemma}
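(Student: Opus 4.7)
The plan is to reduce both embeddings to a direct application of the general time-interpolation trace embedding~\eqref{interpo in conti time} followed, for the second assertion, by a Sobolev embedding on the compact $(d{-}1)$-dimensional manifold $\II(0)$. The key input in both cases is the well-known identification of the real interpolation space of two Sobolev--Slobodeckij spaces on the same spatial domain, namely $(W_p^{s_0},W_p^{s_1})_{\theta,p} = W_p^{(1-\theta)s_0 + \theta s_1}$ whenever $(1-\theta)s_0+\theta s_1$ is not an integer and both endpoints behave well with respect to the underlying geometry.

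For the first embedding I would apply \eqref{interpo in conti time} with $X_0 := L^p(\Omega)$ and $X_1 := W_p^2(\mathring{\O}(0)) \cap W_p^1(\Omega)$, so that by definition of $\mathbb{V}_p$ we have $\mathbb{V}_p = W_p^1(J;X_0) \cap L^p(J;X_1)$. The trace-in-time space is $(X_0,X_1)_{1-1/p,p}$, and the identification $(L^p,W_p^2)_{1-1/p,p} = W_p^{2-2/p}$ (applied componentwise on each of the two bulk phases, the $W_p^1(\Omega)$ factor ensuring the phases glue together across $\II(0)$ in the trace sense) yields the target space $W_p^{2-2/p}(\mathring{\O}(0))$.

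For the second embedding I would apply \eqref{interpo in conti time} with $X_0 := W_p^{1-1/p}(\II(0))$ and $X_1 := W_p^{3-1/p}(\II(0))$, which gives $\mathbb{H}_p \hookrightarrow C(J;(X_0,X_1)_{1-1/p,p})$. A direct computation of the interpolation exponent,
\begin{equation*}
\tfrac{1}{p}\bigl(1-\tfrac{1}{p}\bigr) + \bigl(1-\tfrac{1}{p}\bigr)\bigl(3-\tfrac{1}{p}\bigr) = 3 - \tfrac{3}{p},
\end{equation*}
produces the first half of \eqref{interpolation conti tZ}. The second half $W_p^{3-3/p}(\II(0)) \hookrightarrow C^2(\II(0))$ is a standard Sobolev embedding on the compact closed manifold $\II(0)$ of dimension $d{-}1$: the sufficient gap $(3-3/p) - 2 > (d-1)/p$ is equivalent to $p > d+2$, exactly the running assumption of Proposition~\ref{local strong limit sys}.

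The only potential obstacle I anticipate is the bookkeeping for the first embedding, where the target $W_p^{2-2/p}$ has order strictly greater than $1$ (since $p>d{+}2\geq 4$), so one has to be careful that the real interpolation between $L^p(\Omega)$ and the two-scale space $W_p^2(\mathring{\O}(0)) \cap W_p^1(\Omega)$ correctly encodes the absence of a jump across $\II(0)$ while allowing the jump of the normal derivative. This is however standard: since the endpoint $X_1$ already enforces $W_p^1(\Omega)$-regularity globally, the interpolation space inherits a matching condition at $\II(0)$, and on each bulk component it reduces to the classical identification; no new machinery beyond the results quoted in the preliminaries is required.
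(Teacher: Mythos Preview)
Your proposal is correct and follows essentially the same route as the paper: both arguments reduce the embeddings to the trace-in-time interpolation inequality~\eqref{interpo in conti time} together with the identification $(W_p^{s_0},W_p^{s_1})_{\theta,p}=W_p^{(1-\theta)s_0+\theta s_1}$, and then invoke Morrey's embedding on the $(d{-}1)$-dimensional manifold $\II(0)$ using $p>d+2$ for the final step. Your extra remark about the two-scale structure $W_p^2(\mathring{\Omega}(0))\cap W_p^1(\Omega)$ is a harmless refinement the paper simply absorbs into applying the interpolation formula phase-wise on $\mathring{\Omega}(0)$.
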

\begin{proof}
To compute the interpolation of Sobolev--Slobodeckij  spaces  we recall
\begin{align}
\(W_{p}^{m_1}(U),W_{p}^{m_2}(U)\)_{\theta,p} =W_p^s(U),\qquad s=(1-\theta)m_1+\theta m_2,\label{interpolation general}
\end{align}
where $\theta\in (0,1)$ and $U$ is an open set or a closed compact manifold.
We deduce from \eqref{interpolation general} that 
\begin{align}
\left(W_{p}^{1-1/p}(U), W_{p}^{3-1/p}(U)
\right)_{1-1/p, p}&=W_{p}^{3-\frac{3}{p}}(U) \hookrightarrow C^{2}(U),\label{embedding Iv conti}\\
\left(L_{p}(U), W_{p}^{2}(U)
\right)_{1-1/p, p}&=W_{p}^{2-\frac{2}{p}}(U).
\end{align}
These combined with    the   interpolation inequality \eqref{interpo in conti time} leads to \eqref{interpolation conti tX} and the first embedding in \eqref{interpolation conti tZ}. Concerning the second embedding in \eqref{interpolation conti tZ}, as  $p>d+2$  and   $\II(0)$ is $(d{-}1)$-dimensional,  we have  $3-\frac{3}{p}-\frac{d-1}{p}>2$. Then the result follows from  Morrey's  embedding.
 \end{proof}

The following result is a simpler  version of Abels and Moser~\cite[Theorem 4.1]{zbMATH06951007}.
   \begin{proposition}
   Let the assumptions of Proposition \ref{local strong limit sys} be in place.
  Let $p>d+2$ and $2<q<3$ with $1+\frac{d+2}{p}>\frac{d+2}{q}$.  
 Then  there exists $T>0$ such that      the system consisting of  
\eqref{flat system1}  and  \eqref{flat system2} has  a unique solution with  
\begin{align}
(\tilde{v },\tilde{\pi},h)\in \mathbb{V}_q\times \mathbb{P}_q\times \mathbb{H}_p.\label{unbalanced regularity}
\end{align}

 \end{proposition}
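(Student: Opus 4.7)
The strategy I would adopt is to reformulate \eqref{flat system1}--\eqref{flat system2} as a fixed-point equation on a small ball in $\mathbb{V}_q\times\mathbb{P}_q\times\mathbb{H}_p$, built by inverting an uncoupled linearization via maximal regularity and then absorbing the coupling and genuinely nonlinear terms for short times. The unusual feature is that the height $h$ is sought at a higher integrability $p$ than the fluid unknowns at $q$, and the sharp scaling inequality $1+(d+2)/p>(d+2)/q$ will be consumed in bridging the two blocks via a parabolic trace embedding. Concretely, I would Taylor-expand all $h$-dependent operators from \eqref{pertubations} around $h=0$, writing $\nabla^h=\nabla+M(h)\nabla$, $\no^h=\no_{\II(0)}+R_{\no}(h)$, $H^h=H_{\II(0)}-\Delta_{\II(0)}h+R_H(h)$ with smooth Nemytskii-type remainders $M,R_{\no},R_H$ vanishing at least to first order at $h=0$ (in particular small in $C(J;C^1)$ via \eqref{interpolation conti tZ} and \eqref{E extension 123}); this recasts \eqref{flat system1} as a reference two-phase Stokes problem with inhomogeneous jump datum $H_{\II(0)}\no_{\II(0)}$ and nonlinear sources, and \eqref{flat system2} as $\partial_t h-\Delta_{\II(0)}h=\tilde v\cdot\no_{\II(0)}+F_h(h,\tilde v)$ on the closed surface $\II(0)$.

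\textbf{Linear blocks and their maximal regularity.} For the fluid block I would invoke $L^q$-maximal regularity of the two-phase Stokes system with inhomogeneous jump data (cf.\ \cite{KohnePruessWilke} and the framework of Pr\"uss--Simonett), giving a bounded solution operator $\mathcal{S}_q$ mapping sources and the initial datum $v_0$ into $\mathbb{V}_q\times\mathbb{P}_q$; the admissibility of $v_0\in W_p^{2-2/p}(\mathring{\Omega}(0))\cap W_p^1(\Omega)$ in the $q$-setting follows from $p>q$, and the listed compatibility conditions of Proposition~\ref{local strong limit sys} match the required trace conditions. For the height equation I would use $L^p$-maximal regularity of $\partial_t-\Delta_{\II(0)}$ on the compact surface $\II(0)$, providing $\mathcal{T}_p\colon L^p(J;W_p^{1-1/p}(\II(0)))\to\mathbb{H}_p$, with the trivial initial value $h|_{t=0}=0$ built in.

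\textbf{Nonlinear estimate and role of $1+(d+2)/p>(d+2)/q$.} The main obstacle is to verify that the composite map
\begin{equation*}
\Phi(\tilde v,\tilde\pi,h):=\big(\mathcal{S}_q(F_v,F_{\mathrm{div}},F_{\mathrm{jmp}},v_0),\,\mathcal{T}_p(\tilde v\cdot\no_{\II(0)}+F_h)\big)
\end{equation*}
is a self-contraction on a small ball in $\mathbb{V}_q\times\mathbb{P}_q\times\mathbb{H}_p$. Products such as $M(h)\nabla^2\tilde v$ sit in $L^q(J;L^q)$ with norm $\lesssim\|h\|_{\mathbb{H}_p}\|\tilde v\|_{\mathbb{V}_q}$ via \eqref{interpolation conti tZ}, and similarly the remaining $F_v,F_{\mathrm{div}},F_{\mathrm{jmp}},F_h$ are routinely bounded. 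The truly delicate term is the coupling $\tilde v\cdot\no_{\II(0)}$, which must serve as a source for $\mathcal{T}_p$ and hence lie in $L^p(J;W_p^{1-1/p}(\II(0)))$. The parabolic trace of $\mathbb{V}_q$ on $\II(0)\times J$ has anisotropic regularity of order $2-1/q$ at integrability $q$ on a parabolic manifold of dimension $d+1$; the needed Sobolev embedding into $L^p(J;W_p^{1-1/p}(\II(0)))$ is admissible exactly when $(2-1/q)-(d+1)/q\geq(1-1/p)-(d+1)/p$, which simplifies to $1+(d+2)/p\geq(d+2)/q$ --- precisely the stated hypothesis, with the strict inequality providing a small gain used in the contraction.

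\textbf{Contraction and conclusion.} Since $h|_{t=0}=0$, the interpolation bound \eqref{wsp embed cs} produces a factor $T^\alpha$ on all $h$-dependent contributions, and combined with the first-order vanishing of $M,R_{\no},R_H$ at $h=0$ each nonlinearity is Lipschitz of constant $\|(\tilde v,\tilde\pi,h)\|_{\mathbb{V}_q\times\mathbb{P}_q\times\mathbb{H}_p}+T^\alpha$ on a fixed ball. Choosing first the radius and then $T$ small enough turns $\Phi$ into a $\tfrac12$-contraction, whose unique fixed point supplied by Banach's theorem is the sought solution with regularity \eqref{unbalanced regularity}. The hardest step is the trace estimate of Step~3, where the sharp parabolic Sobolev scaling between the $q$- and $p$-blocks is entirely consumed; every other nonlinear estimate reduces to a routine Sobolev-product calculation once the $C^2$-control of $Eh$ from \eqref{interpolation conti tZ} is in hand.
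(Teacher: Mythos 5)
The paper does not actually prove this statement: it is quoted verbatim (in simplified form) as \cite[Theorem~4.1]{zbMATH06951007} of Abels and Moser, and the subsequent text immediately takes it as given to bootstrap to the balanced $p$-regularity of Proposition~\ref{local strong limit sys}. There is therefore no ``paper's own proof'' against which to check you line by line, and a fair assessment is that your reconstruction is consistent with the fixed-point-via-maximal-regularity strategy that Abels--Moser (and the paper's own proof of Proposition~\ref{local strong limit sys}, cf.\ the abstract formulation $\mathscr{L}=\mathscr{N}$ in \eqref{operator equ}--\eqref{local well N operator}) actually deploy: freeze the geometry at $h=0$ via the Hanzawa transform, invoke $L^q$-maximal regularity for the two-phase Stokes block and $L^p$-maximal regularity for the surface evolution, then contract for small $T$ using the vanishing of $Eh$ at $t=0$ and \eqref{wsp embed cs}.

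Your identification of where the hypothesis $1+(d+2)/p>(d+2)/q$ enters is the most substantive part, and the arithmetic checks out: the parabolic Sobolev index of the trace of $\mathbb{V}_q$ on $\II(0)\times J$ is $2-(d+2)/q$, that of $L^p(J;W_p^{1-1/p}(\II(0)))$ is $1-(d+2)/p$, and the embedding needed to feed $\tilde v\cdot\no_{\II(0)}$ into the $\mathcal{T}_p$ block requires exactly the first to dominate the second, with the strict inequality providing the short-time gain. Two small cautions. First, the $-\Delta_{\II(0)}h$ contribution to $H^h$ in the stress jump \eqref{transformed stress jump} is linear, not quadratic, in $h$; your argument that it is nevertheless contractive for small $T$ because $h|_{t=0}=0$ (so one may trade $\mathbb{H}_p$-control for a factor $T^{\alpha}$ via \eqref{wsp embed cs}) is correct but should be flagged explicitly, since it is qualitatively different from the genuinely quadratic remainders $R_{\no},R_H,M$. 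Second, the paper's own fixed-point argument (Step ``Proof of Proposition~\ref{local strong limit sys}'') does \emph{not} run a coupled contraction for $(\tilde v,\tilde\pi,h)$ simultaneously; instead it treats $h\in\mathbb{H}_p$ as a known input, contracts only in $(\tilde v,\tilde\pi)$ with the ``momentum-corrected'' divergence constraint \eqref{div integral}, and then upgrades $h$ by solving the quasilinear parabolic equation \eqref{flat system2} a posteriori. Your fully coupled fixed point is also viable and closer to Abels--Moser's original argument, but it is a genuinely different organization than the one sketched in this appendix.
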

 We make a comment on the   regularity of $(\tilde{v },\tilde{\pi})$.
\begin{remark}
According to the regularity of $\tilde{v}$ by \eqref{def Xq} and the embedding \eqref{interpolation conti tX}, we must have $v _0(\cdot)=\tilde{v}( \cdot,0)\in W_q^{2- 2/q}(\mathring{\O}(0))$. If $q> 3$, then the trace estimate implies $\nabla v_0\in W_q^{1- 2/q}(\mathring{\O}(0))\hookrightarrow W_q^{1- 3/q}(\II(0))$. In order to guarantee that the restrictions  from outer and inner domains $\O^\pm(0)$ give the same trace,  an additional compatibility condition must be added. Such a condition is caused by the possible jump of $\nabla v_0$ across $\II(0)$.
To avoid such a compatibility condition, Abels and Moser~\cite{zbMATH06951007} assume $q<3$, and this causes unbalanced regularities \eqref{unbalanced regularity}.
 However, in the current system \eqref{flat system1}, we work 
in the regime of coinciding shear viscosities of the two phases. 
It turns out that this assumption necessitates continuity of the flow gradient
across the interface, see Lemma~\ref{lem:noJump} below.
In particular, we can simply assume $v_0\in W_q^{2- 2/q}(\O)$. 
\end{remark}

\begin{lemma}
\label{lem:noJump}
Under the assumption of coinciding shear viscosities $\mu_+=\mu_-$
of the two fluid phases, it follows that
the flow gradient $\nabla v$ is continuous across $\II$, i.e., $\jump{\nabla v}=0$. 
As a result, we can improve \eqref{regularity of u} to 
 \begin{align}
 v \in   W_{p}^{1}\left( J; L^{p}(\Omega)\right)  \cap L^{p}\left( J; W_{p}^{2}\left(\Omega \right)  \right). \label{regularity of u across}
 \end{align}
 \end{lemma}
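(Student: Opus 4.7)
The plan is to first establish $\jump{\nabla v}=0$ on $\II$ by combining the three pieces of information available at the interface, and then upgrade the piecewise $W^2_p$ regularity to a global one by a straightforward integration-by-parts argument.

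For the first step, I would work at a fixed time~$t$ and a fixed point $p\in \II(t)$, adopting a local orthonormal frame in which $\no_{\II}=e_d$ and $e_1,\ldots,e_{d-1}$ span the tangent space. Since $\jump{v}=0$ along~$\II$, tangential differentiation commutes with the trace operation, so $\jump{\partial_\alpha v_j}=0$ for all $\alpha\in\{1,\ldots,d{-}1\}$ and $j\in\{1,\ldots,d\}$; that is, every tangential derivative of $v$ is continuous across $\II$. Next, $\div v=0$ on each side yields $\partial_d v_d = -\sum_{\alpha<d}\partial_\alpha v_\alpha$, whose right-hand side is already continuous, so $\jump{\partial_d v_d}=0$. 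Finally I would exploit the tangential projection of \eqref{jump stress}: since the right-hand side $H_{\II}\no_{\II}$ is purely normal, $\Pi_{\II}\jump{2Dv\cdot\no_{\II}}=0$, which in the chosen frame reads $\jump{\partial_\alpha v_d + \partial_d v_\alpha}=0$ for each $\alpha<d$. The first summand is a tangential derivative and hence already continuous, so $\jump{\partial_d v_\alpha}=0$ for all $\alpha<d$. Combining the three, $\jump{\nabla v}=0$ on $\II$.

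The second step promotes $v\in L^p(J;W^2_p(\mathring{\O}(t)))$ to $v\in L^p(J;W^2_p(\Omega))$. For any $\phi \in C^\infty_c(\Omega)$, splitting $\Omega = \Omega_+(t)\cup\Omega_-(t)\cup\II(t)$ and integrating by parts twice, the interfacial boundary terms come in pairs with opposite-oriented outward normals: the first pair cancels by $\jump{v}=0$, and the second by $\jump{\nabla v}=0$. Consequently the distributional second derivatives of $v$ on $\Omega$ coincide with the piecewise ones defined on $\mathring{\O}(t)$, and the $L^p$ bound on $D^2 v$ over $\Omega$ is inherited from the two-sided estimates. Applying the same cancellation once gives $\nabla v\in L^p(J;W^1_p(\Omega))$, and the time-derivative regularity from~\eqref{regularity of u} is untouched, thereby establishing~\eqref{regularity of u across}.

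The proof is not hard, but it is delicate in the sense that it relies crucially on $\mu_+=\mu_-$. Were the viscosities unequal, the tangential stress jump would read $\jump{\mu(\partial_\alpha v_d+\partial_d v_\alpha)}=0$, mixing a continuous and a potentially discontinuous term and generically producing $\jump{\nabla v}\neq 0$; the $W^2_p(\Omega)$ upgrade would then break down and one would be forced back to the unbalanced regularity framework of~\cite{zbMATH06951007}. Accordingly, the only genuine step is the algebraic use of the constitutive jump condition together with tangential continuity, which is purely pointwise in $(x,t)\in\II$ and does not require any further PDE analysis.
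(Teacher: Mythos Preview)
Your proof is correct and follows essentially the same route as the paper: tangential continuity from $\jump{v}=0$, the normal--normal component from the divergence-free condition, and the remaining normal--tangential components from the tangential projection of the stress jump, with equal viscosities being the key input. The paper phrases the argument in tensor notation rather than in a local frame and handles the divergence step by first passing through a distributional argument, but the substance is the same; your explicit integration-by-parts justification for the $W^2_p(\Omega)$ upgrade is a welcome addition that the paper leaves implicit.
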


\begin{proof}
In the proof, we shall abbreviate $\no_{\II}$ by $\no$.
 It follows from \eqref{regularity of u} that 
\begin{equation}
\nabla v\in W^1_p(\mathring{\O}(t))~ ~ \text{for a.e.\ } t\in[0,T],\label{nabla v w1p}
\end{equation}
and thus $\jump{ \nabla v}$ makes sense. By   \eqref{jump u}, we deduce that 
the tangential derivative of $v$ do not  jump across $\II$, i.e.,
\begin{align}
\label{eq:noTangentialJump}
\jump{ \nabla^{\tan}  v_i}=0~\text{ on }\II~\text{ for }~1\leq i\leq d,
\end{align}
   where $\nabla^{\tan} v_i:= (\mathrm{Id}-\no\otimes \no) \nabla v_i$ is the tangential gradient of $v_i$.
	Thus,  one can verify that   $\nabla\cdot v=0$ in  $\O$  in the sense of distribution 
	(see \cite[Lemma 2.5]{Alberti2003} for an even more general situation), i.e., 
	for any $\varphi\in C_c^1(\O)$ it holds  $\int_\O  v\cdot \nabla \varphi\, dx=0$. 
	This in turn implies by the regularity of~$v$ that $\nabla\cdot v = 0$ a.e.\ in~$\Omega$,
	and therefore by~\eqref{eq:noTangentialJump}
	\begin{align}
	\label{eq:noNormalNormalJump}
	\jump{\no \otimes \no \colon \nabla v}
	= \jump{(\no \otimes \no-\mathrm{Id}) \colon \nabla v} + \jump{\nabla \cdot v} = 0.
	\end{align}
	Next, multiplying \eqref{jump stress} by any vector field~$\mathbf{t}$ orthogonal to~$\no$, we deduce 
	from~\eqref{eq:noTangentialJump}
\begin{align} 
\label{eq:noTangentialNormalJump}
0 = \jump{ 2 \nabla^\mathrm{sym} v: \no \otimes \mathbf{t}}
= \jump{\nabla v : \mathbf{t} \otimes \no}.
\end{align}
Hence, the claim of Lemma~\ref{lem:noJump} follows from~\eqref{eq:noTangentialJump},
\eqref{eq:noNormalNormalJump} and~\eqref{eq:noTangentialNormalJump}.
	%
%
\end{proof}

\subsection{Proof of Proposition~\ref{local strong limit sys}.}
The regularities of $(\tilde{v},\tilde{\pi})$ and $\tilde{h}$  in \eqref{unbalanced regularity}  are not balanced. 
  It remains to show further  integrability   of $(\tilde{v},\tilde{\pi})$ by solving \eqref{flat system1} separately under  some additional compatibility conditions. A complete proof of this   will be  
	quite lengthy and technical, and we thus only give a sketch of the proof.
 To this end, we write   the system \eqref{flat system1} in an abstract form  
 \begin{align}\label{operator equ}
 \mathscr{L}\begin{pmatrix}
 \tilde{v} \\ \tilde{\pi}  
 \end{pmatrix}= \mathscr{N}\begin{pmatrix}
 \tilde{v} \\ \tilde{\pi}  
 \end{pmatrix}
 \end{align}

 where 
 \begin{align}\label{linear operator L}
 \mathscr{L}\begin{pmatrix}
 \tilde{v} \\ \tilde{\pi}  
 \end{pmatrix}
:=\left(\begin{array}{c}
\partial_{t} \tilde{v }- \Delta \tilde{v }-\nabla \tilde{\pi} \\
\div  \tilde{v } \\
\jump{ 2 D \tilde{v } -\tilde{\pi} \mathrm{Id} } \no_{\II} \\
\tilde{v}|_{t=0}
\end{array}\right)
\end{align}
is the linearized operator
and 
$\mathscr{N}$ is the nonlinear one:
\begin{align}\label{local well N operator}
 \mathscr{N}\begin{pmatrix}
 \tilde{v }\\ \tilde{\pi}  
 \end{pmatrix}:=\left(\begin{array}{c}
 \mathbf{a} \left(\tilde{v },\tilde{\pi},h\right) -\tilde{v } \cdot \nabla^h \tilde{v }+\nabla^h \tilde{v } \,\partial_{t} \tilde{\Theta}_{h} \\
(\div  \tilde{v }-\div ^h \tilde{v })-  \fint_{\Omega} (\div  \tilde{v }-\div^h \tilde{v }) d x \\
\mathbf{b}\left(\tilde{v },\tilde{\pi},h\right)+  H^h \no^h \\
v_0
\end{array}\right).
 \end{align}
In \eqref{local well N operator}, we used the definitions from~\eqref{pertubations} and 
\begin{subequations}\label{various differences}
\begin{align}
\mathbf{a} (\tilde{v }, \tilde{\pi},h) &:=( \div ^h \nabla^h \tilde{v }-\Delta \tilde{v })+(\nabla^h-\nabla) \tilde{\pi}, \label{various differences1}\\
\mathbf{b}(\tilde{v }, \tilde{\pi},h) &:=\jump{  2 D \tilde{v } -\tilde{\pi} ~\mathrm{Id} }(\no_{\II}-\no^h)+2\jump{ D \tilde{v }-D^h \tilde{v }) } \no^h. \label{various difference2}
\end{align}
\end{subequations}
\begin{remark}
It is easy to verify that  the operator equation~\eqref{operator equ} is equivalent to~\eqref{flat system1}
 except that  the second equation in the latter, i.e., $\div ^h \tilde{v } =0$,  is replaced by
\begin{align}\label{div integral}
\div^h  \tilde{v } =   \fint_{\Omega}  \div^h \tilde{v }\, d x
\end{align}
after simplification.
It is obvious that the former  equation implies the latter one. The opposite direction is proved in    \cite[p.\ 51]{MR3062573}:
\begin{align*}
&\fint_{\Omega}  \div^h \tilde{v }\, d x\int_\O  |\det \nabla \tilde{\Theta}(x,t)| \, dx 
\\&\overset{\eqref{div integral}}
=\int_\O \div^h  \tilde{v } |\det \nabla\tilde{\Theta}(x,t)| \, dx
\\&\overset{\eqref{gradient equ}}
= \int_\O (\div_y v)|_{y=\tilde{\Theta}_{h}(x, t)} |\det \nabla\tilde{\Theta}(x,t)| \, dx
\\
&~~=\int_\O \div_y v(y)\, dy=0.
\end{align*}
\end{remark}

Recall  from  \eqref{flat system2} that $h$ starts from $0$. By  the regularity 
$h\in \mathbb{H}_p$, cf.\ \eqref{space of h},  it can be shown that,   within a short time period  $[0,T]$, the nonlinear operator $\mathscr{N}$ defined by~\eqref{local well N operator} is locally Lipschitz  continuous between a pair of Banach  spaces. Moreover, we shall   show that 
$\mathscr{L}$  is an isomorphism between this pair. These two results together with Banach's fixed point theorem then lead to the proof of Proposition~\ref{local strong limit sys}.

The invertibility  of the linear operator $\mathscr{L}$ from~\eqref{linear operator L}
 corresponds  to  the solvability of the  following linear system:
\begin{subequations}\label{local well linear operator}
 \begin{align}
\partial_{t} \tilde{v }-  \Delta \tilde{v }+\nabla \tilde{\pi} &=f & & \text { in } \mathring{\O}(0) \times(0, T), \label{local well linear operator1}\\
\div  \tilde{v } &=g  & & \text { in } \mathring{\O}(0) \times(0, T), \label{local well linear operator2}\\
\jump{ 2 D \tilde{v } -\tilde{\pi}~ \mathrm{Id} } ~ \no_{\II} &=w & & \text { on } \II(0) \times(0, T),\label{local well linear operator3}\\
\jump{ \tilde{v } } &=0 & & \text { on } \II(0) \times(0, T),\label{local well linear operator4} \\
\tilde{v }|_{\partial \Omega} &=0 & & \text { on } \partial \Omega \times(0, T), \label{local well linear operator5}\\
\tilde{v }|_{t=0} &=\tilde{v }_0 & & \text { in } \Omega.
\end{align}
\end{subequations}
Here, we assume
   $(f,g,w)\in L^p_{x,t}\times \mathbb{G}_p\times \mathbb{W}_p$  where   
\begin{subequations}\label{local well compatibility}
\begin{align}
&\mathbb{G}_p:=\left\{g\in L^p\(0,T;W^1_p(\mathring{\O}(0))\)\Big\vert\int_\O g(t,\cdot)=0
\text{ for a.e.\ } t\in (0,T)\right\},\label{local well integral constraint}\\
 &  \mathbb{W}_p:=W^{\frac 12(1- \frac 1p)}_p\Big(0,T;L^p(\II(0) )\Big)\cap L^p\Big(0,T;W^{1-  \frac 1p}_p(\II(0) )\Big),\label{wp space}
\end{align}
\end{subequations}
and we assume that $(g,w,\tilde{v}_0)$ satisfy the following compatibility conditions:
\begin{subequations}\label{assume v0}
\begin{align}
\tilde{v} _0 &\in W_p^{2-2/p}(\mathring{\O}(0)),\\
  \div  \tilde{v}_0 &= g|_{t=0} &&\text{in}~\mathring{\O}(0)\label{local well initial constraint}\\
 \tilde{v}_0|_{\partial\O} &= 0,\\
  \jump{ \tilde{v}_0 } &= 0 &&\text{on}~\II(0),\\
 \Pi_{\II}\(\jump{ 2D \tilde{v}_0 }\cdot \no_{\II}\)\Big |_{t=0}
&= \Pi_{\II} w\Big |_{t=0}.\label{local well jump stress}
 \end{align}
\end{subequations}

\begin{remark}
We note that the integral constraint  in \eqref{local well integral constraint} follows by integrating \eqref{local well linear operator2} and using \eqref{local well linear operator4} and \eqref{local well linear operator5}. The compatibility condition \eqref{local well initial constraint} follows by taking $t=0$ in  \eqref{local well linear operator2}. Finally, 
\eqref{local well jump stress} is a consequence of taking the tangential 
projection of~\eqref{local well linear operator3} and then restricting it to $t=0$.
\end{remark}

The solvability of the system \eqref{local well linear operator}  is due to   the following maximal regularity result   in \cite{MR2548875}.

\begin{lemma}
Assuming the compatibility conditions \eqref{assume v0}, the system  \eqref{local well linear operator} has a unique solution with the following estimate under notations in \eqref{local well functional space}:
\begin{align}
\|(\tilde{v} ,\tilde{\pi})\|_{\mathbb{V}_p\times \mathbb{P}_p}\leq C\(\|f\|_{L^p_{x,t}} +\|(g,w)\|_{  \mathbb{G}_p\times \mathbb{W}_p}+\|\tilde{v}_0\|_{W_p^{2-2/p}(\mathring{\O}(0))}\)
\end{align}
where $C$ is a constant depending on  the geometry of $\mathring{\O}(0)$.
\end{lemma}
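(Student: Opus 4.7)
The plan is to reduce the inhomogeneous two-phase linearized Stokes system~\eqref{local well linear operator} to a model problem on a flat interface and invoke maximal $L^p$-regularity via operator-valued Fourier multiplier theory, in the spirit of~\cite{MR2548875}. The argument proceeds by successively stripping off the inhomogeneous data, analyzing a flat model problem, and localizing with a perturbation argument.

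First, I would reduce to homogeneous data in three substeps. (i) Absorb the initial datum by constructing $\tilde v_1 \in \mathbb{V}_p$ whose time-zero trace matches $\tilde v_0$, using that $(L^p,W^2_p)_{1-1/p,p}=W^{2-2/p}_p$, cf.~\eqref{general interpolation}. (ii) Remove the divergence right-hand side $g$ by solving $\operatorname{div}\tilde v_2 = g$ with a time-dependent Bogovskii-type operator producing $\tilde v_2\in\mathbb{V}_p$; the mean-zero constraint in~\eqref{local well integral constraint} is precisely the solvability condition for each time slice, and the initial compatibility~\eqref{local well initial constraint} ensures that $\tilde v_2|_{t=0}$ is consistent with step~(i). (iii) Remove the tangential part of the stress jump by a harmonic (two-phase Dirichlet-to-Neumann) lifting across $\II(0)$; this is where~\eqref{local well jump stress} is crucial, since it guarantees that this lifting can be chosen with vanishing initial trace and zero bulk divergence. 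The normal component of $w$ is, at this stage, accommodated directly by a jump in the pressure, which is why no initial compatibility condition for the normal stress appears in~\eqref{assume v0}.

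Next, the reduced problem, with only an $L^p_{x,t}$ bulk force and a normal stress jump as inhomogeneities, is attacked by localization. A $C^3$ partition of unity subordinate to charts that flatten $\II(0)$ (admissible by~\eqref{eq:proofMainResultAux2}) reduces the problem, modulo lower-order commutator terms, to the flat two-phase Stokes model problem on $\Rd$ with interface $\mathbb{R}^{d-1}{\times}\{0\}$. Joint tangential Fourier and temporal Laplace transforms yield explicit symbols; $\mathcal{R}$-boundedness of the resulting resolvent family, combined with the Weis operator-valued multiplier theorem, produces the maximal $L^p$-regularity estimate. The asserted regularity $W^{1-1/p,(1-1/p)/2}_p$ of the pressure jump in~\eqref{def Yq} appears naturally as the anisotropic trace space matching $\tilde\pi\in L^p(J;\mathring W^1_p)$. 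Curvature of the interface and the localization errors near $\partial\Omega$ are absorbed by a Neumann series on a sufficiently short time interval, yielding the bound with a constant depending only on the geometry of $\mathring{\O}(0)$.

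The main obstacle is the analysis of the flat model problem with the transmission conditions~\eqref{local well linear operator3}--\eqref{local well linear operator4}: one must obtain resolvent $\mathcal{R}$-bounds, uniform in the spectral parameter, that reflect the correct jump structure of $\tilde\pi$ across the interface. This boils down to a nontrivial symbol computation for the two-phase Stokes operator and a verification of a Lopatinskii--Shapiro type condition in the jump setting. Intertwined with this, the compatibility conditions~\eqref{assume v0} are precisely what is needed so that the three liftings in steps (i)--(iii) paste together consistently at $t=0$ and remain compatible with the interface jump conditions; without them, the reductions would produce data whose initial values lie outside the trace space $W^{2-2/p}_p(\mathring{\O}(0))$, and the maximal regularity bound would fail.
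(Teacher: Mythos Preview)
The paper does not give its own proof of this lemma: it simply attributes the result to Shimizu~\cite{MR2548875} (with the supplementary remark that the corresponding resolvent estimate in bent half-spaces is in~\cite{MR1978384}, yielding the $p=2$ case, but that $L^p$ maximal regularity does not follow directly from the resolvent bound). In other words, the lemma is quoted as a known maximal regularity result for the linearized two-phase Stokes system, not re-proved.

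Your proposal, by contrast, sketches the actual machinery behind such a result: reduction to homogeneous data via trace extension, a Bogovskii-type divergence corrector, and a lifting of the tangential stress jump; then localization to a flat model problem, Fourier--Laplace analysis of the symbol, $\mathcal{R}$-boundedness and the Weis multiplier theorem, and finally a perturbation/Neumann-series argument to handle curvature and localization errors. This is indeed the standard route taken in the references the paper cites (Shibata--Shimizu~\cite{MR1978384}, Shimizu~\cite{MR2548875}), and your identification of the role of the compatibility conditions~\eqref{assume v0} in making the successive liftings consistent at $t=0$ is correct. So your outline is sound and matches the literature, but it goes well beyond what the paper itself does, which is merely to invoke the result as a black box.
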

 Note that the resolvent estimate for \eqref{local well linear operator} 
 in bent half-spaces is given in \cite[Theorem 6.1]{MR1978384}, which implies the  maximal regularity of the instationary system when  $p=2$ (see   \cite{MR3062573} for a short proof). 
 However, the $L^p$ maximal regularity of the instationary system does not follow directly from the corresponding resolvent estimate.

To show  the contraction property of  \eqref{local well N operator}, 
we need to study the regularity of the mapping $(\nabla \tilde{\Theta}_{h})^{-1} - \mathrm{Id}$.
  
\begin{lemma}
Under the regularity assumption $h\in \mathbb{H}_p$, cf.\ \eqref{space of h}, we have 
 \begin{align}\label{est hanzawa}
&\left\| (\nabla \tilde{\Theta}_{h})^{-1}  -\mathrm{Id}\right\|_{C([0,T],W^{2-\frac 2 p}_p(\O))}+\left\| (\nabla \tilde{\Theta}_{h})^{-1}-\mathrm{Id}  \right\|_{W^{\frac 12}_p (0,T; L^p(\O) )}\xrightarrow{T\to 0}0.
\end{align}
\end{lemma}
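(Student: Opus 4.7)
The plan is to write $A := \nabla\tilde\Theta_h - \mathrm{Id}$ and exploit the Neumann-series identity
\begin{align*}
(\nabla\tilde\Theta_h)^{-1} - \mathrm{Id} = -A\,(\mathrm{Id}+A)^{-1},
\end{align*}
which reduces the claim to showing (i) that both asserted norms of $A$ tend to $0$ as $T\to 0$, and (ii) that $(\mathrm{Id}+A)^{-1}$ remains bounded in a suitable algebra. For (ii), the hypothesis $p>d+2$ gives $2-2/p > d/p$, so $W_p^{2-2/p}(\Omega)$ is a Banach algebra continuously embedded into $L^\infty(\Omega)$; once the pointwise norm $\|A\|_{C(J;L^\infty)}$ is made small enough the Neumann series converges and $(\mathrm{Id}+A)^{-1}-\mathrm{Id}$ is comparable (in the algebra) to $A$ times a uniformly bounded multiplier.

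For the $C([0,T];W_p^{2-2/p}(\Omega))$-estimate, I will use the product rule in
\begin{align*}
A(x,t) = \nabla\!\left(\zeta(\dist(x,\II(0))/\delta)\,(Eh)(x,t)\,\no_{\II(0)}(x)\right).
\end{align*}
Since $\zeta(\dist/\delta)\,\no_{\II(0)}$ is a fixed smooth compactly supported vector field, it suffices to bound $Eh$ in $C([0,T]; W_p^{3-2/p}(B_\delta))$. Interpolating the extension operator in \eqref{E extension 123} between the endpoints $k=3$ and $k=4$ yields $E\in\mathscr{L}(W_p^{3-3/p}(\II(0)),W_p^{3-2/p}(B_\delta))$, so the estimate reduces to bounding $h$ in $C([0,T];W_p^{3-3/p}(\II(0)))$. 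This norm tends to zero with~$T$: by \eqref{interpolation conti tZ} we have $h\in C(J;W_p^{3-3/p}(\II(0)))$, and since $h(\cdot,0)=0$ the continuity at $t=0$ gives $\sup_{t\in[0,T]}\|h(\cdot,t)\|_{W_p^{3-3/p}(\II(0))}\to 0$.

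For the $W_p^{1/2}(0,T;L^p(\Omega))$-estimate, the key input is Hölder continuity in time. Applying \eqref{general interpolation conti} to $\mathbb{H}_p$ with $X_0=W_p^{1-1/p}(\II(0))$, $X_1=W_p^{3-1/p}(\II(0))$ and $\theta=0$ gives $h\in C^{1-1/p}(J;W_p^{1-1/p}(\II(0)))$ with norm controlled by the $\mathbb{H}_p$-norm. Extending by $E$ and differentiating once in space yields $A\in C^{1-1/p}(J;L^p(\Omega))$. Since $p>2$ implies $1-1/p>1/2$, the inequality \eqref{wsp embed cs} with $s=1/2$, $s'=1-1/p$ delivers
\begin{align*}
[A]_{1/2,p,L^p(\Omega)} \leq C\,T^{1/2}\,\|A\|_{C^{1-1/p}(J;L^p(\Omega))},
\end{align*}
while $A(\cdot,0)=0$ together with the Hölder bound yields $\|A\|_{L^p(J;L^p(\Omega))}\leq CT\,\|A\|_{C^{1-1/p}(J;L^p(\Omega))}$; both contributions vanish as $T\to 0$.

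To finish, I will transfer these estimates to $(\mathrm{Id}+A)^{-1}-\mathrm{Id}=-A(\mathrm{Id}+A)^{-1}$. In the $C(J;W_p^{2-2/p})$ norm the Banach-algebra bound on $(\mathrm{Id}+A)^{-1}$ gives the conclusion directly. In the $W_p^{1/2}(J;L^p)$ norm I will use the standard product estimate
\begin{align*}
\|fg\|_{W_p^{1/2}(J;L^p)} \leq C\bigl(\|f\|_{C(J;L^\infty)}\|g\|_{W_p^{1/2}(J;L^p)}+\|f\|_{W_p^{1/2}(J;L^\infty)}\|g\|_{C(J;L^p)}\bigr)
\end{align*}
with $f=(\mathrm{Id}+A)^{-1}-\mathrm{Id}$ (so that $f$ inherits the decay of $A$ via the algebra property and the embedding $W_p^{2-2/p}\hookrightarrow L^\infty$) and $g=A$. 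Each of the four norms on the right vanishes as $T\to 0$ by the two estimates just established. The main technical obstacle is this product estimate in Sobolev–Slobodeckij spaces, but it is classical, and the passage through $(\mathrm{Id}+A)^{-1}$ only costs a uniform constant as long as $T$ is chosen so that $\|A\|_{C(J;L^\infty)}\leq 1/2$.
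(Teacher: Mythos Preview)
Your approach is essentially the paper's own: write $(\nabla\tilde\Theta_h)^{-1}-\mathrm{Id}$ via the Neumann series, control $A=\nabla\tilde\Theta_h-\mathrm{Id}$ in $C(J;W_p^{2-2/p})$ using $h|_{t=0}=0$ and the embedding~\eqref{interpolation conti tZ}, and control the $W_p^{1/2}(J;L^p)$ norm via H\"older continuity in time together with~\eqref{wsp embed cs}. Two small corrections: the interpolation of the extension operator should be between $k=2$ and $k=3$ (not $k=3$ and $k=4$) to land in $\mathscr{L}(W_p^{3-3/p},W_p^{3-2/p})$; and rather than invoking~\eqref{general interpolation conti} at the endpoint $\theta=0$, simply use the one-dimensional Sobolev embedding $W_p^1(J;X_0)\hookrightarrow C^{1-1/p}(J;X_0)$ directly.
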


\begin{proof}
Recalling \eqref{space of h} and   \eqref{E extension 123}, we have 
\begin{align}
Eh \in W_{p}^{1}\left( 0,T; W_{p}^{1}\big(B_{\delta}(\II(0))\big)\right)
\cap L^p\left( 0,T; W_{p}^{3}\big(B_{\delta}(\II(0))\big)\right). \label{regularity Eh}
\end{align}
By \eqref{extend hanzawa}, we obtain the same regularity for $\tilde{\Theta}$. So using    \eqref{general interpolation} and \eqref{interpo in conti time} yields 
\begin{align}\label{interpolation reg theta}
\tilde{\Theta}\in  C( [0,T] ; W_{p}^{3-\frac 2p}(\O)) \cap W^{\frac 12}_p (0,T; W^2_p(\O) ).
\end{align}
 As  $Eh|_{t=0}=0$,  for a sufficiently small $T$, we deduce from \eqref{extend hanzawa} and \eqref{interpolation reg theta} that   
\begin{align}
\sup_{x\in \O,t\in [0,T]}|\nabla \tilde{\Theta}_{h} - \mathrm{Id}|\leq 1/2.
\end{align}
By a Taylor expansion, we have   
\begin{align}
(\nabla \tilde{\Theta}_{h})^{-1}-\mathrm{Id}=\mathfrak{F}(\nabla \tilde{\Theta}_{h}-\mathrm{Id})
\end{align}
 where $\mathfrak{F}(B)=\sum_{k=1}^\infty (-1)^k B^k$ is a smooth matrix-valued function for $|B|<1$. This combined with  \eqref{interpolation reg theta}  and \eqref{extend hanzawa} yields  
 \begin{align*} 
&\left\| (\nabla \tilde{\Theta}_{h})^{-1}  -\mathrm{Id}\right\|_{C([0,T],W^{2-2/p}_p(\O))}
+\left\| (\nabla \tilde{\Theta}_{h})^{-1}-\mathrm{Id}  \right\|_{W^{1/2}_p(0,T; L^p(\O))}\\
~~~&\leq  \left\|  Eh\right\|_{C([0,T],W^{3-2/p}_p(B_\delta(\II(0)))}
+\left\| Eh  \right\|_{W^{1/2}_p(0,T; W^1_p(B_\delta(\II(0))))}.
\end{align*}
The first term on the right hand side vanishes as $T\downarrow 0$ because of \eqref{interpolation reg theta} and  $Eh|_{t=0}=0$. Concerning the second one, we have for $s'=\frac12 +\epsilon$, $0<\epsilon \ll 0$ and $p> d+2$  that 
\begin{align*}
\left\| Eh  \right\|_{W^{\frac 12}_p(0,T; W^1_p )}& \overset{\eqref{wsp embed cs}} \leq C_{s',p} T^{s'-\frac 12+\frac 1p} \left\| Eh  \right\|_{C^{s'}([0,T]; W^1_p)}\\
& \leq C_{s',p} T^{s'-\frac 12+\frac 1p} \left\| Eh  \right\|_{W^{s'+\epsilon+1/p}_p([0,T]; W^1_p)}\\
&\overset{\eqref{regularity Eh}}\leq C_{s',p} T^{s'-\frac 12+\frac 1p} C_{Eh}.\end{align*}
With this, we may conclude with our sketch of the proof.
 \end{proof}
 
 \begin{lemma}
Under the regularity assumption $h\in \mathbb{H}_p$, cf.\ \eqref{space of h},   
the mapping \eqref{local well N operator}
\begin{align}\label{scrN}
\mathscr{N}:\mathbb{V}_p\times \mathbb{P}_p\mapsto L^p_{x,t}\times \mathbb{G}_p\times \mathbb{W}_p\times W_p^{2-2/p}(\mathring{\Omega}(0))=:\mathbb{N}_p
\end{align}
 is locally Lipschitz. Moreover,  for 
 \begin{align}\label{vi in BR}
 (v_i,\pi_i)\in B_R\(\mathbb{V}_p\times \mathbb{P}_p\)\text{ with }1\leq i\leq 2,
 \end{align}
  we have the following estimate:
    \begin{align}\label{est diff}
 &\left\|\mathscr{N}\begin{pmatrix}
 v_1\\ \pi_1  
 \end{pmatrix}-\mathscr{N}\begin{pmatrix}
v_2\\ \pi_2
 \end{pmatrix} \right\|_{  \mathbb{N}_p}\leq  C(R,T)  \|(v_1-v_2,\pi_1-\pi_2)\|_{\mathbb{V}_p\times \mathbb{P}_p},
 \end{align}
 where $\lim_{T\downarrow 0}C(R,T)=0$ for each fixed $R>0$.
   \end{lemma}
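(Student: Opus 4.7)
The plan is to verify \eqref{est diff} componentwise on $\mathbb{N}_p$ from \eqref{scrN} by exploiting two structural features of $\mathscr{N}$: every contribution in \eqref{local well N operator} factors as the product of a quantity that vanishes at $t=0$ with an expression that is (bi)linear in $(\tilde v,\tilde\pi)$; and the Sobolev exponent $p>d+2$ yields $\mathbb{V}_p\hookrightarrow C([0,T];W^{2-2/p}_p(\mathring{\O}(0)))\hookrightarrow C([0,T];C^1)$, furnishing uniform bounds $\|\tilde v\|_{L^\infty_{x,t}},\,\|\nabla\tilde v\|_{L^\infty_{x,t}}\leq CR$ for every $(\tilde v,\tilde\pi)\in B_R(\mathbb{V}_p\times\mathbb{P}_p)$. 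The fourth entry $v_0$ of $\mathscr{N}$ is independent of $(\tilde v,\tilde\pi)$ and cancels in the difference; we may further restrict the fixed-point iteration to the affine subset $\{\tilde v|_{t=0}=v_0\}$, so that $(v_1-v_2)|_{t=0}=0$.

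For the $L^p_{x,t}$-component, the terms $\mathbf{a}(\tilde v,\tilde\pi,h)$ and $\nabla^h\tilde v\,\partial_t\tilde{\Theta}_h$ are linear in $(\tilde v,\tilde\pi)$ multiplied by $(\nabla\tilde{\Theta}_h)^{-1}-\mathrm{Id}$ or $\partial_t\tilde{\Theta}_h$; their differences are linear in $(v_1-v_2,\pi_1-\pi_2)$, and H\"older's inequality together with \eqref{est hanzawa} supplies a prefactor that tends to $0$ as $T\downarrow 0$. The genuinely quadratic convection term $\tilde v\cdot\nabla^h\tilde v$ is handled via
\[
v_1\cdot\nabla^h v_1 - v_2\cdot\nabla^h v_2 = (v_1-v_2)\cdot\nabla^h v_1 + v_2\cdot\nabla^h(v_1-v_2),
\]
where the uniform bounds on $v_i$ and $\nabla^h v_i$ reduce matters to extracting a factor $T^\alpha$ from $v_1-v_2$. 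Since $(v_1-v_2)|_{t=0}=0$, the interpolation \eqref{general interpolation conti} combined with \eqref{wsp embed cs} gives $\|v_1-v_2\|_{L^\infty_{x,t}}\leq CT^\alpha\|v_1-v_2\|_{\mathbb{V}_p}$ for some $\alpha>0$, which is the desired decay.

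For the $\mathbb{G}_p$-component, the divergence correction $(\div-\div^h)\tilde v = \mathrm{tr}((\mathrm{Id}-(\nabla\tilde{\Theta}_h)^{-\mathsf T})\nabla\tilde v)$ is bilinear in $(\nabla\tilde{\Theta}_h)^{-1}-\mathrm{Id}$ and $\nabla\tilde v$; since $W^{2-2/p}_p(\O)$ is an algebra for $p>d+2$, a standard product estimate in $L^p(J;W^1_p)$ separates the two factors and \eqref{est hanzawa} delivers the decay, while the zero-integral constraint \eqref{local well integral constraint} is built into the second entry of \eqref{local well N operator} by construction. The main obstacle is the $\mathbb{W}_p$-component coming from $\mathbf{b}(\tilde v,\tilde\pi,h)+H^h\no^h$, since one has to simultaneously control the fractional-time Slobodeckij seminorm $[\cdot]_{(1/2)(1-1/p),p,L^p(\II(0))}$ of products of functions supported on the interface. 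Here the strategy is to use \eqref{various differences} to decompose into products of $(\tilde v,\tilde\pi)$-factors (estimated through $\mathbb{V}_p,\mathbb{P}_p$ and the trace embedding $W^1_p(\mathring{\O}(0))\hookrightarrow W^{1-1/p}_p(\II(0))$) against the geometric factors $(\nabla\tilde{\Theta}_h)^{-1}-\mathrm{Id}$, $\no_{\II}-\no^h$ and $H^h-H_{\II(0)}$; each geometric factor vanishes at $t=0$ (since $h|_{t=0}=0$ and hence $\tilde{\Theta}_h|_{t=0}=\mathrm{Id}$), so \eqref{wsp embed cs} yields the required $T^\alpha$-factor in the fractional-time norm. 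Summing these contributions over all entries of \eqref{local well N operator} produces the claimed Lipschitz estimate with $C(R,T)\xrightarrow{T\to 0}0$ for each fixed $R>0$.
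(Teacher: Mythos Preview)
Your overall strategy matches the paper's: decompose $\mathscr{N}$ componentwise, exploit that every entry factors as a geometric quantity (vanishing at $t=0$ since $h|_{t=0}=0$) times something (bi)linear in $(\tilde v,\tilde\pi)$, and use the Banach-algebra property of $W^1_p(\Omega)$ for $p>d+2$ together with \eqref{est hanzawa} to extract the small constant $C(R,T)\to 0$. The treatment of the $\mathbb{W}_p$-component via trace estimates and \eqref{wsp embed cs} is essentially what the paper sketches for the representative term $\jump{D\tilde v-D^h\tilde v}\,\no^h$.

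Where you diverge from the paper is in handling the quadratic convection term. You invoke the affine restriction $(v_1-v_2)|_{t=0}=0$ and then use H\"older regularity in time to squeeze out a factor $T^\alpha$. This is not wrong for the eventual fixed-point application, but it proves a weaker statement than the lemma as written (which is for arbitrary $(v_i,\pi_i)\in B_R$, with no constraint on initial data), and it is more elaborate than necessary. The paper's argument is purely elementary: after the same bilinear splitting, it bounds the large factor ($\nabla^h v_1$ resp.\ $v_2$) in $L^\infty_{x,t}$ via $\mathbb{V}_p\hookrightarrow C([0,T];C^1)$ and the assumption $(v_i,\pi_i)\in B_R$, and then extracts the small factor directly from
\[
\|\bar v\|_{L^p(0,T;L^p(\Omega))}\leq T^{1/p}\,\|\bar v\|_{C([0,T];W^1_p)}\leq C\,T^{1/p}\,\|\bar v\|_{\mathbb{V}_p},
\]
which requires no hypothesis on $\bar v|_{t=0}$. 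So your proof is sound for the purpose at hand, but the paper's route is both simpler and establishes the lemma in the stated generality.
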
     
\begin{proof}
A full proof of \eqref{est diff} will be quite lengthy and technical, 
so  we again only  sketch a few key steps here. 

We first deduce from \eqref{regularity Eh} and $Eh|_{t=0}=0$ that 
 \begin{align}
\mathfrak{h}(\tau):=\| Eh\|_{C([0, \tau] ; W_{p}^{3- 2/p}(B_{\delta}(\II(0))))}\label{GT def}
\end{align}
 is  continuous on $[0,T)$ and that $\lim_{\tau\downarrow 0} \mathfrak{h}(\tau)=0$.
To verify the  Lipschitz continuity~\eqref{est diff},  we start from  the only  
nonlinear term $\tilde{v } \cdot \nabla^h \tilde{v }$ of it. Let $\bar{v}=v_1-v_2$. Then
\begin{align*}
&\left\| v_1 \cdot \nabla^h v_1 -v_2 \cdot \nabla^h v_2 \right\|_{L^p_{x,t}}\\
&=\left\| \bar{v} \cdot \nabla^h v_1 -v_2 \cdot \nabla^h \bar{v} \right\|_{L^p_{x,t}}\\
&\leq     C_1(R)  \(1+\mathfrak{h}(T)\) T^{\frac 1 p} \|\bar{v}\|_{C([0,T], W^1_p)}\qquad \text{by } \eqref{vi in BR}\text{ and } \eqref{est hanzawa}\\
&\leq     C_2(R)   T^{\frac 1 p} \|\bar{v}\|_{\mathbb{V}_p }.
\end{align*}
All the remaining terms defining \eqref{local well N operator} are linear. 
We shall merely   estimate $\jump{ D \tilde{v }-D^h \tilde{v }) } \no^h $ in \eqref{various difference2}, 
since all the other terms can be treated  in a similar way. 
Note that   $W^1_p(\O)$   is a   Banach algebra for $p>d+2$. Then
\begin{align*}
&\left\| \jump{ D v_1-D^h v_1) }  -\jump{ D v_2-D^h v_2) } \right\|_{\mathbb{W}_p}\\
&~~~\overset{\eqref{def D h}}
\lesssim   \left\| ((\nabla \tilde{\Theta}_{h})^{-1} - \mathrm{Id})
\jump{\nabla \bar{v}}\right\|_{\mathbb{W}_p}\\
&~~~\overset{\eqref{wp space}}\lesssim    \left\| ((\nabla \tilde{\Theta}_{h})^{-1} - \mathrm{Id}) \nabla \bar{v}\right\|_{L^p(0,T;W^1_p(\mathring{\O}(0))) \cap W^{1/2}_p(0,T;L^p( \O  ))}
\\
 &~~~~~\leq   \|(\nabla \tilde{\Theta}_{h})^{-1} - \mathrm{Id} \|_{C([0,T],W^{2-2/p}_p(\O))}
\left\|   \bar{v}\right\|_{L^p(0,T;W^2_p(\mathring{\O}(0)))\cap W^{1/2}_p(0,T;W^1_p(\mathring{\O}(0)))}
\\
&~~~~~~~~~ +\left\|  (\nabla \tilde{\Theta}_{h})^{-1} - \mathrm{Id} 
\right\|_{  W^{1/2}_p(0,T;L^p(\O(0)))}\| \nabla \bar{v}\|_{L^\infty_{x,t}}\\
&\overset{\eqref{est hanzawa},\eqref{interpolation conti tX}} \leq   
C(T)~\| \bar{v}\|_{W^{1}_p(0,T;L^p(\mathring{\O}(0)))\cap L^p(0,T;W^2_p(\mathring{\O}(0)))}
\end{align*} 
with  $C(T)\xrightarrow{T\to 0} 0$. 
\end{proof}

\begin{proof}[Proof of Proposition~\ref{local strong limit sys}]
 Combining the above two lemmas, we deduce 
 $(\tilde{v},\tilde{\pi})\in \mathbb{V}_p\times  \mathbb{P}_p$ (cf.\ \eqref{local well functional space}) 
by a fixed point argument.   
To obtain further regularity of $h$, we recall  from \eqref{def Xq} that 
\begin{equation}
\tilde{v}\in \mathbb{V}_p\hookrightarrow W_p^{1-\frac 1{2p}}\left( J; L^p(\II(0))\right)  
\cap L^p\big( J; W_p^{2-\frac 1p} (\II(0) )  \big).
\end{equation}
Hence, by solving the quasilinear parabolic equation \eqref{flat system2} can we  
improves the regularity from of $h\in \mathbb{H}_p$, cf. \eqref{space of h}, to \eqref{regularity h}.
 This in turn is sufficient to conclude with the regularity
stated in Definition~\ref{def:strongSolSharpInterfaceLimit}.
 \end{proof}

\section*{Acknowledgements}
S.\ Hensel has received funding from the Deutsche Forschungsgemeinschaft (DFG, German Research Foundation) 
under Germany's Excellence Strategy -- EXC-2047/1 -- 390685813. Y.\ Liu is partially 
supported by NSF of China under Grant  11971314.


\begin{bibdiv}
\begin{biblist}

\bib{Abels2022a}{article}{
      author={Abels, H.},
       title={({N}on-)convergence of solutions of the convective
  {A}llen--{C}ahn equation},
        date={2022},
     journal={Partial Differ. Equ. Appl.},
      volume={3},
      number={1},
}

\bib{Abels2022}{article}{
      author={Abels, H.},
      author={Fei, M.},
       title={Sharp interface limit for a {N}avier--{S}tokes/{A}llen--{C}ahn
  system with different viscosities},
        date={2022},
     journal={arXiv preprint},
        note={\href{https://arxiv.org/abs/2201.09343}{arXiv:2201.09343}},
}

\bib{AbelsGarckeGruen}{article}{
      author={Abels, H.},
      author={Garcke, H.},
      author={Gr\"un, G.},
       title={Thermodynamically consistent, frame indifferent diffuse interface
  models for incompressible two-phase flows with different densities},
        date={2012},
     journal={Math. Models Methods Appl. Sci.},
      volume={22},
      number={3},
       pages={1150013 (40 pages)},
}

\bib{AbelsLengeler}{article}{
      author={Abels, H.},
      author={Lengeler, D.},
       title={On sharp interface limits for diffuse interface models for
  two-phase flows},
        date={2014},
        ISSN={1463-9963},
     journal={Interfaces Free Bound.},
      volume={16},
      number={3},
       pages={395\ndash 418},
         url={https://doi.org/10.4171/IFB/324},
      review={\MR{3264795}},
}

\bib{AbelsLiu}{article}{
      author={Abels, H.},
      author={Liu, Y.},
       title={Sharp interface limit for a {S}tokes/{A}llen--{C}ahn {S}ystem},
        date={2018},
     journal={Arch. Ration. Mech. Anal.},
      volume={229},
      number={1},
       pages={417\ndash 502},
}

\bib{Abels2021a}{article}{
      author={Abels, H.},
      author={Marquardt, A.},
       title={Sharp interface limit of a {S}tokes/{C}ahn{\textendash}{H}illiard
  system. {P}art {I}: {C}onvergence result},
        date={2021},
     journal={Interfaces Free Bound.},
      volume={23},
      number={3},
       pages={353\ndash 402},
}

\bib{Abels2021b}{article}{
      author={Abels, H.},
      author={Marquardt, A.},
       title={Sharp interface limit of a {S}tokes/{C}ahn{\textendash}{H}illiard
  system, {P}art {II}: {A}pproximate solutions},
        date={2021mar},
     journal={J. Math. Fluid Mech.},
      volume={23},
      number={2},
}

\bib{zbMATH06951007}{incollection}{
      author={Abels, H.},
      author={Moser, M.},
       title={Well-posedness of a {Navier}-{Stokes}/mean curvature flow
  system},
    language={English},
        date={2018},
   booktitle={Mathematical analysis in fluid mechanics: selected recent
  results. international conference on vorticity, rotation and symmetry (iv) --
  complex fluids and the issue of regularity, cirm, luminy, marseille, france,
  may 8--12, 2017. proceedings},
   publisher={Providence, RI: American Mathematical Society (AMS)},
       pages={1\ndash 23},
}

\bib{Abels2019}{article}{
      author={Abels, H.},
      author={Moser, M.},
       title={Convergence of the {A}llen-{C}ahn equation to the mean curvature
  flow with $90^\circ$-contact angle in 2{D}},
        date={2019},
     journal={Interfaces Free Bound.},
      volume={21},
      number={3},
       pages={313\ndash 365},
}

\bib{Abels2021}{article}{
      author={Abels, H.},
      author={Moser, M.},
       title={Convergence of the {A}llen--{C}ahn equation with a nonlinear
  {R}obin boundary condition to mean curvature flow with contact angle close to
  $90^\circ$},
        date={2022},
     journal={SIAM J. Math. Anal.},
      volume={54},
      number={1},
       pages={114\ndash 172},
}

\bib{AbelsRoeger}{article}{
      author={Abels, H.},
      author={R\"oger, M.},
       title={Existence of weak solutions for a non-classical sharp interface
  model for a two-phase flow of viscous, incompressible fluids},
        date={2009},
     journal={Ann. Inst. H. Poincar\'e Anal. Non Lin\'eaire},
      volume={26},
      number={6},
       pages={2403\ndash 2424},
}

\bib{MR3062573}{article}{
      author={Abels, H.},
      author={Wilke, M.},
       title={Well-posedness and qualitative behaviour of solutions for a
  two-phase {N}avier--{S}tokes--{M}ullins--{S}ekerka system},
        date={2013},
        ISSN={1463-9963},
     journal={Interfaces Free Bound.},
      volume={15},
      number={1},
       pages={39\ndash 75},
         url={https://doi.org/10.4171/IFB/294},
      review={\MR{3062573}},
}

\bib{Alberti2003}{article}{
      author={Alberti, G.},
      author={Bouchitt\'{e}, G.},
      author={{Dal Maso}, G.},
       title={The calibration method for the {M}umford--{S}hah functional and
  free-discontinuity problems},
        date={2003},
     journal={Calc. Var. Partial Differential Equations},
      volume={16},
      number={3},
       pages={299\ndash 333},
}

\bib{Alikakos1994}{article}{
      author={Alikakos, N.~D.},
      author={Bates, P.~W.},
      author={Chen, X.},
       title={Convergence of the {C}ahn--{H}illiard equation to the
  {H}ele--{S}haw model},
        date={1994},
     journal={Arch. Ration. Mech. Anal.},
      volume={128},
      number={2},
       pages={165\ndash 205},
}

\bib{MR1345385}{book}{
      author={Amann, H.},
       title={Linear and quasilinear parabolic problems. {V}ol. {I}},
      series={Monographs in Mathematics},
   publisher={Birkh\"{a}user Boston, Inc., Boston, MA},
        date={1995},
      volume={89},
        ISBN={3-7643-5114-4},
         url={https://doi.org/10.1007/978-3-0348-9221-6},
        note={Abstract linear theory},
      review={\MR{1345385}},
}

\bib{MR1783238}{incollection}{
      author={Amann, H.},
       title={Compact embeddings of vector-valued {S}obolev and {B}esov
  spaces},
        date={2000},
      volume={35(55)},
       pages={161\ndash 177},
        note={Dedicated to the memory of Branko Najman},
      review={\MR{1783238}},
}

\bib{Lio2003}{article}{
      author={Barles, G.},
      author={{Da~Lio}, F.},
       title={A geometrical approach to front propagation problems in bounded
  domains with {N}eumann-type boundary conditions},
        date={2003},
     journal={Interfaces Free Bound.},
      volume={5},
      number={3},
       pages={239\ndash 274},
}

\bib{Barles1998}{article}{
      author={Barles, G.},
      author={Souganidis, P.~E.},
       title={A new approach to front propagation problems: {T}heory and
  applications},
        date={1998},
     journal={Arch. Ration. Mech. Anal.},
      volume={141},
      number={3},
       pages={237\ndash 296},
}

\bib{Chen}{article}{
      author={Chen, X.},
       title={Global asymptotic limit of solutions of the {C}ahn-{H}illiard
  equation},
        date={1996},
     journal={J. Differential Geom.},
      volume={44},
      number={2},
       pages={262\ndash 311},
}

\bib{DeMottoni1995}{article}{
      author={{De~Mottoni}, P.},
      author={Schatzman, M.},
       title={Geometrical evolution of developed interfaces},
        date={1995},
     journal={Trans. Amer. Math. Soc.},
      volume={347},
      number={5},
       pages={1533\ndash 1589},
}

\bib{Evans1992}{article}{
      author={Evans, L.~C.},
      author={Soner, H.~M.},
      author={Souganidis, P.~E.},
       title={Phase transitions and generalized motion by mean curvature},
        date={1992},
     journal={Comm. Pure Appl. Math.},
      volume={45},
      number={9},
       pages={1097\ndash 1123},
}

\bib{Fischer2020c}{article}{
      author={Fischer, J.},
      author={Hensel, S.},
       title={Weak-strong uniqueness for the {N}avier--{S}tokes equation for
  two fluids with surface tension},
        date={2020},
     journal={Arch. Ration. Mech. Anal.},
      volume={236},
      number={2},
       pages={967\ndash 1087},
}

\bib{Fischer2020a}{article}{
      author={Fischer, J.},
      author={Hensel, S.},
      author={Laux, T.},
      author={Simon, T.~M.},
       title={The local structure of the energy landscape in multiphase mean
  curvature flow: {W}eak-strong uniqueness and stability of evolutions},
        date={2020},
     journal={arXiv preprint},
        note={\href{https://arxiv.org/abs/2003.05478}{arXiv:2003.05478v2}},
}

\bib{Fischer2020b}{article}{
      author={Fischer, J.},
      author={Laux, T.},
      author={Simon, T.~M.},
       title={Convergence rates of the {A}llen--{C}ahn equation to mean
  curvature flow: {A} short proof based on relative entropies},
        date={2020},
     journal={SIAM J. Math. Anal.},
      volume={52},
      number={6},
       pages={6222\ndash 6233},
}

\bib{Fischer2022}{article}{
      author={Fischer, J.},
      author={Marveggio, A.},
       title={Quantitative convergence of the vectorial {A}llen--{C}ahn
  equation towards multiphase mean curvature flow},
        date={2022},
     journal={arXiv preprint},
        note={\href{https://arxiv.org/abs/2203.17143}{arXiv:2203.17143}},
}

\bib{Gal2010}{article}{
      author={Gal, C.~G.},
      author={Grasselli, M.},
       title={Longtime behavior for a model of homogeneous incompressible
  two-phase flows},
        date={2010},
     journal={Discrete Contin. Dyn. Syst.},
      volume={28},
      number={1},
       pages={1\ndash 39},
}

\bib{GURTIN1996}{article}{
      author={Gurtin, M.~E.},
      author={Polignone, D.},
      author={Vi{\~{n}}als, J.},
       title={Two-phase binary fluids and immiscible fluids described by an
  order parameter},
        date={1996},
     journal={Math. Models Methods Appl. Sci.},
      volume={6},
      number={6},
       pages={815\ndash 831},
}

\bib{Hensel2021d}{article}{
      author={Hensel, S.},
      author={Laux, T.},
       title={{BV} solutions to mean curvature flow with constant contact
  angle: {A}llen--{C}ahn approximation and weak-strong uniqueness},
        date={2021},
     journal={In (minor) revision at Indiana Univ. Math. J.},
        note={\href{https://arxiv.org/abs/2112.11150}{arXiv:2112.11150}},
}

\bib{Hensel2021l}{article}{
      author={Hensel, S.},
      author={Laux, T.},
       title={A new varifold solution concept for mean curvature flow:
  {C}onvergence of the {A}llen--{C}ahn equation and weak-strong uniqueness},
        date={2021},
     journal={arXiv preprint},
        note={\href{https://arxiv.org/abs/2109.04233}{arXiv:2109.04233}},
}

\bib{Hensel2021}{article}{
      author={Hensel, S.},
      author={Laux, T.},
       title={Weak-strong uniqueness for the mean curvature flow of double
  bubbles},
        date={2021},
     journal={In (minor) revision at Interfaces Free Bound.},
        note={\href{https://arxiv.org/abs/2108.01733}{arXiv:2108.01733}},
}

\bib{Hensel2021m}{article}{
      author={Hensel, S.},
      author={Marveggio, A.},
       title={Weak-strong uniqueness for the {N}avier--{S}tokes equation for
  two fluids with ninety degree contact angle and same viscosities},
        date={2021},
     journal={In (minor) revision at J. Math. Fluid Mech.},
        note={\href{https://arxiv.org/abs/2112.11154}{arXiv:2112.11154}},
}

\bib{Hensel2021c}{article}{
      author={Hensel, S.},
      author={Moser, M.},
       title={Convergence rates for the {A}llen--{C}ahn equation with boundary
  contact energy: The non-perturbative regime},
        date={2021},
     journal={arXiv preprint},
        note={\href{https://arxiv.org/abs/2112.11173}{arXiv:2112.11173}},
}

\bib{HohenbergHalperin}{article}{
      author={Hohenberg, P.~C.},
      author={Halperin, B.~I.},
       title={Theory of dynamic critical phenomena},
        date={1977},
     journal={Rev. Mod. Phys.},
      volume={49},
       pages={435\ndash 479},
}

\bib{ilmanen}{article}{
      author={Ilmanen, T.},
       title={Convergence of the {A}llen--{C}ahn equation to {B}rakke's motion
  by mean curvature},
        date={1993},
     journal={J. Differential Geom.},
      volume={38},
      number={2},
       pages={417\ndash 461},
}

\bib{JerrardSmets}{article}{
      author={Jerrard, R.~L.},
      author={Smets, D.},
       title={On the motion of a curve by its binormal curvature},
        date={2015},
     journal={J. Eur. Math. Soc.},
      volume={17},
      number={6},
       pages={1487\ndash 1515},
}

\bib{Jiang2017}{article}{
      author={Jiang, J.},
      author={Li, Y.},
      author={Liu, C.},
       title={Two-phase incompressible flows with variable density: {A}n
  energetic variational approach},
        date={2017},
     journal={Discrete Contin. Dyn. Syst. - A},
      volume={37},
      number={6},
       pages={3243\ndash 3284},
}

\bib{Jiang2022}{article}{
      author={Jiang, S.},
      author={Su, X.},
      author={Xie, F.},
       title={Remarks on sharp interface limit for an incompressible
  {N}avier--{S}tokes and {A}llen--{C}ahn coupled system},
        date={2022},
     journal={arXiv preprint},
        note={\href{https://arxiv.org/abs/2205.01301}{arXiv:2205.01301}},
}

\bib{Kagaya2018a}{article}{
      author={Kagaya, T.},
       title={Convergence of the {A}llen--{C}ahn equation with a zero {N}eumann
  boundary condition on non-convex domains},
        date={2018},
     journal={Math. Ann.},
      volume={373},
      number={3-4},
       pages={1485\ndash 1528},
}

\bib{Katsoulakis1995}{article}{
      author={Katsoulakis, M.},
      author={Kossioris, G.~T.},
      author={Reitich, F.},
       title={Generalized motion by mean curvature with {N}eumann conditions
  and the {A}llen-{C}ahn model for phase transitions},
        date={1995},
     journal={J. Geom. Anal.},
      volume={5},
      number={2},
       pages={255\ndash 279},
}

\bib{KohnePruessWilke}{article}{
      author={K\"ohne, M.},
      author={Pr\"uss, J.},
      author={Wilke, M.},
       title={Qualitative behaviour of solutions for the two-phase
  {N}avier-{S}tokes equations with surface tension},
        date={2013},
     journal={Math. Ann.},
      volume={356},
      number={2},
       pages={737\ndash 792},
}

\bib{Kroemer2021}{article}{
      author={Kroemer, M.},
      author={Laux, T.},
       title={The {H}ele--{S}haw flow as the sharp interface limit of the
  {C}ahn--{H}illiard equation with disparate mobilities},
        date={2021},
     journal={arXiv preprint},
        note={\href{https://arxiv.org/abs/2111.14505}{arXiv:2111.14505}},
}

\bib{Laux2021}{article}{
      author={Laux, T.},
      author={Liu, Y.},
       title={Nematic{\textendash}isotropic phase transition in liquid
  crystals: {A} variational derivation of effective geometric motions},
        date={2021},
     journal={Arch. Ration. Mech. Anal.},
      volume={241},
      number={3},
       pages={1785\ndash 1814},
}

\bib{LauxSimon18}{article}{
      author={Laux, T.},
      author={Simon, T.~M.},
       title={Convergence of the {A}llen--{C}ahn equation to multiphase mean
  curvature flow},
        date={2018},
     journal={Comm.~Pure~Appl.~Math.},
      volume={71},
      number={8},
       pages={1597\ndash 1647},
}

\bib{Le2008}{article}{
      author={Le, N.~Q.},
       title={A gamma-convergence approach to the {C}ahn{\textendash}{H}illiard
  equation},
        date={2008},
     journal={Calc. Var. Partial Differential Equations},
      volume={32},
      number={4},
       pages={499\ndash 522},
}

\bib{MR1329830}{article}{
      author={Lin, F.-H.},
      author={Liu, C.},
       title={Nonparabolic dissipative systems modeling the flow of liquid
  crystals},
        date={1995},
        ISSN={0010-3640},
     journal={Comm. Pure Appl. Math.},
      volume={48},
      number={5},
       pages={501\ndash 537},
         url={https://doi.org/10.1002/cpa.3160480503},
      review={\MR{1329830}},
}

\bib{LiuShen}{article}{
      author={Liu, C.},
      author={Shen, J.},
       title={A phase field model for the mixture of two incompressible fluids
  and its approximation by a {F}ourier-spectral method},
        date={2003},
     journal={Phys. D},
      volume={179},
      number={3-4},
       pages={211\ndash 228},
}

\bib{Liu2021}{article}{
      author={Liu, Y.},
       title={Sharp interface limit of an anisotropic {G}inzburg--{L}andau
  equation},
        date={2021},
     journal={arXiv preprint},
        note={\href{https://arxiv.org/abs/2111.15061}{arXiv:2111.15061}},
}

\bib{Mizuno2015}{article}{
      author={Mizuno, M.},
      author={Tonegawa, Y.},
       title={Convergence of the {A}llen--{C}ahn equation with {N}eumann
  boundary conditions},
        date={2015},
     journal={{SIAM} J. Math. Anal.},
      volume={47},
      number={3},
       pages={1906\ndash 1932},
}

\bib{mugnai-roeger}{article}{
      author={Mugnai, L.},
      author={R\"{o}ger, M.},
       title={The {A}llen-{C}ahn action functional in higher dimensions},
        date={2008},
     journal={Interfaces Free Bound.},
      volume={10},
      number={1},
       pages={45\ndash 78},
}

\bib{Owen1992}{article}{
      author={Owen, N.},
      author={Sternberg, P.},
       title={Gradient flow and front propagation with boundary contact
  energy},
        date={1992},
     journal={Proc.\ Math.\ Phys.\ Sci.},
      volume={437},
       pages={715\ndash 728},
}

\bib{Sandier2004}{article}{
      author={Sandier, E.},
      author={Serfaty, S.},
       title={Gamma-convergence of gradient flows with applications to
  {G}inzburg--{L}andau},
        date={2004},
     journal={Comm. Pure Appl. Math.},
      volume={57},
      number={12},
       pages={1627\ndash 1672},
}

\bib{serfaty}{article}{
      author={Serfaty, S.},
       title={Gamma-convergence of gradient flows on {H}ilbert and metric
  spaces and applications},
        date={2011},
     journal={Discrete Contin. Dyn. Syst. - A},
      volume={31},
      number={4},
       pages={1427\ndash 1451},
}

\bib{MR1978384}{article}{
      author={Shibata, Y.},
      author={Shimizu, S.},
       title={On a resolvent estimate of the interface problem for the {S}tokes
  system in a bounded domain},
        date={2003},
        ISSN={0022-0396},
     journal={J. Differential Equations},
      volume={191},
      number={2},
       pages={408\ndash 444},
         url={https://doi.org/10.1016/S0022-0396(03)00023-8},
      review={\MR{1978384}},
}

\bib{MR2548875}{incollection}{
      author={Shimizu, S.},
       title={Maximal regularity and viscous incompressible flows with free
  interface},
        date={2008},
   booktitle={Parabolic and {N}avier-{S}tokes equations. {P}art 2},
      series={Banach Center Publ.},
      volume={81},
   publisher={Polish Acad. Sci. Inst. Math., Warsaw},
       pages={471\ndash 480},
         url={https://doi.org/10.4064/bc81-0-29},
      review={\MR{2548875}},
}

\end{biblist}
\end{bibdiv}


\end{document}